\def\BState{\State\hskip-\ALG@thistlm}
\pgfplotsset{compat = newest}
\setlist[itemize]{leftmargin=*}
\newcommand{\ve}{\mathbf{e}}
\newcommand{\vx}{\mathbf{x}}
\newcommand{\vy}{\mathbf{y}}
\newcommand{\vz}{\mathbf{z}}
\newcommand{\vu}{\mathbf{u}}
\newcommand{\vU}{\mathbf{U}}
\newcommand{\vv}{\mathbf{v}}
\newcommand{\vw}{\mathbf{w}}
\newcommand{\vn}{\mathbf{n}}
\newcommand{\vm}{\mathbf{m}}
\newcommand{\vLambda}{\boldsymbol{\Lambda}}
\newcommand{\vF}{\mathbf{F}}
\newcommand{\vL}{\mathbf{L}}
\newcommand{\va}{\mathbf{a}}
\newcommand{\vb}{\mathbf{b}}
\newcommand{\vR}{\mathbf{R}}
\newcommand{\vA}{\mathbf{A}}
\newcommand{\vB}{\mathbf{B}}
\newcommand{\vQ}{\mathbf{Q}}
\newcommand{\jump}[1]{[ #1 ]} 
\newcommand{\Th}{\mathcal{T}_h} 
\newcommand{\Eh}{\mathcal{E}_h} 
\newcommand{\V}{\mathbb{V}} 
\newcommand{\Vh}{\mathbb{V}_h}
\newcommand{\Id}{\mathbf{I}} 
\newcommand{\I}{\normalfont{\Romanbar{1}}}
\newcommand{\divrg}{{\rm div}} 
\newcommand{\argmin}{{\rm argmin}} 
\newcommand{\wh}{\widehat}
\newcommand{\calB}{\mathcal{B}}
\newcommand{\calE}{\mathcal{E}}
\newcommand{\calI}{\mathcal{I}}
\newcommand{\tol}{\textrm{tol}}
\newcommand{\logLogSlopeTriangle}[5]
{

    \pgfplotsextra
    {
        \pgfkeysgetvalue{/pgfplots/xmin}{\xmin}
        \pgfkeysgetvalue{/pgfplots/xmax}{\xmax}
        \pgfkeysgetvalue{/pgfplots/ymin}{\ymin}
        \pgfkeysgetvalue{/pgfplots/ymax}{\ymax}

        \pgfmathsetmacro{\xArel}{#1}
        \pgfmathsetmacro{\yArel}{#3}
        \pgfmathsetmacro{\xBrel}{#1-#2}
        \pgfmathsetmacro{\yBrel}{\yArel}
        \pgfmathsetmacro{\xCrel}{\xArel}

        \pgfmathsetmacro{\lnxB}{\xmin*(1-(#1-#2))+\xmax*(#1-#2)} 
        \pgfmathsetmacro{\lnxA}{\xmin*(1-#1)+\xmax*#1} 
        \pgfmathsetmacro{\lnyA}{\ymin*(1-#3)+\ymax*#3} 
        \pgfmathsetmacro{\lnyC}{\lnyA+#4*(\lnxA-\lnxB)}
        \pgfmathsetmacro{\yCrel}{\lnyC-\ymin)/(\ymax-\ymin)} 

        \coordinate (A) at (rel axis cs:\xArel,\yArel);
        \coordinate (B) at (rel axis cs:\xBrel,\yBrel);
        \coordinate (C) at (rel axis cs:\xCrel,\yCrel);

        \draw[#5]   (A)-- node[pos=0.5,anchor=north] {1}
                    (B)-- 
                    (C)-- node[pos=0.5,anchor=west] {#4}
                    cycle;
    }
}
\title{Convergent FEM for a membrane model of liquid crystal polymer networks}
\author{Lucas Bouck\thanks{Department of Mathematics, University of Maryland, College Park, Maryland 20742, USA. (\email{lbouck@umd.edu})}
\and Ricardo H. Nochetto
\thanks{Department of Mathematics and Institute for Physical Science and Technology, University of Maryland, College Park, Maryland 20742, USA. (\email{rhn@umd.edu})}
\and Shuo Yang
\thanks{Yanqi Lake Beijing Institute of Mathematical Sciences and Applications, Beijing 101408, China, and Yau Mathematical Sciences Center, Tsinghua University, Beijing 100084, China (\email{shuoyang@bimsa.cn})}
}
\date{\today}
\begin{document}
\maketitle
\begin{abstract}
We design a finite element method (FEM) for a membrane model of liquid crystal polymer networks (LCNs). This model consists of a minimization problem of a non-convex stretching energy. We discuss properties of this energy functional such as lack of weak lower semicontinuity. We devise a discretization with regularization, propose a novel iterative scheme to solve the non-convex discrete minimization problem, and prove stability of the scheme and convergence of discrete minimizers. We present numerical simulations to illustrate convergence properties of our algorithm and features of the model.
\end{abstract}

\section{Introduction}
Liquid crystals polymer networks (LCNs) are materials that can deform spontaneously upon temperature or optical actuation. In such materials, mesogens (compounds that display liquid crystal properties) are cross-linked to elastomeric polymer networks so that the nematic director (mesogen's orientation) influences the network deformation under actuation. In other words, these materials combine the features of rubber and nematic liquid crystals.
This actuation property can be widely exploited in the design of materials, such as microrobots \cite{zhao2022twisting} and biomedical devices \cite{hebert1997dynamics,li2006artificial}, to achieve non-trivial and useful shapes.

We are concerned with thin films of LCNs, which are slender materials usually mathematically modeled as $3D$ hyper-elastic bodies $\calB := \Omega\times(-t/2,t/2)$, with $\Omega\subset \mathbb{R}^2$ being a bounded Lipschitz domain and $t$ being a small thickness parameter. Classical approaches in elasticity exploit dimension reduction techniques to derive 2D models for the mid-plane deformation $\vy(\Omega)$. 

\subsection{Nematic director fields and order parameters}
Due to the nematic-elastic coupling in LCNs, director (unit length vector) fields characterize orientations of LC molecules and play a crucial role in materials deformations. 

The director field $\vm :\Omega \to\mathbb{S}^1$, so-called {\it blueprinted} director field, is pre-determined and encodes the anisotropy of mesogens on the \emph{reference} mid-plane $\Omega$. On the \emph{reference} $3D$ elastic body, $\wh{\vm}: \calB \to \mathbb{S}^2$ defines an extended blueprinted director field, and we assume it takes the form $\wh{\vm} := (\vm,0)^T$. Similarly, $\vn: \calB \to \mathbb{S}^2$ denotes the director field on the \emph{deformed} configuration. 

Depending on the strength of cross-linkings between nematic components and rubber-like polymer chains, such materials can be further classified as LCNs (sometimes it is also called liquid crystal glasses) or liquid crystals elastomers (LCEs): the former has moderate to dense crosslinks, while in the latter the density of crosslinks is low \cite{white2015programmable}. In this paper, we focus on LCNs and leave a numerical study of LCEs for future research. Mathematically, the strong coupling in LCNs is reflected in terms of director fields via a kinematic constraint \cite{ozenda2020blend}:
\begin{equation}\label{eq:kinematic-cons}
\vn:=\frac{(\nabla\vu)\wh\vm}{|(\nabla\vu)\wh\vm|},
\end{equation}
where $\vu: \calB \to\mathbb{R}^{3}$ is the $3D$ deformation. 
This implies that, in contrast to LCEs \cite{warner2007liquid,bartels2022nonlinear}, here $\vn$ is \emph{not} a free variable, and is also called a frozen director \cite{cirak2014computational}. 

Moreover, $s_0,s\in L^\infty(\Omega)$ are nematic order parameters that refer to the \emph{reference} configuration and \emph{deformed} configuration respectively. These parameters are typically constant in time and depend on temperature, but may vary in $\Omega$ if the liquid crystal polymers are actuated non-uniformly. Their physical range is $s_0,s>-1$ and $s_0,s$ are bounded away from $-1$ i.e.
\begin{equation}\label{eq:s-lower-bound}
\mathrm{essinf}_{\vx\in \Omega} s_0(\vx )>-1,\quad  \mathrm{essinf}_{\vx\in \Omega}s(\vx ) > -1 \, .
\end{equation}
The actuation parameter of the model is
\begin{equation}\label{eq:lambda}
\lambda = \lambda_{s,s_0}  = \sqrt[3]{\frac{s+1}{s_0+1}} \, .
\end{equation}
If the material is heated, then $\lambda <1$. Likewise, if cooled, then $\lambda >1$. For $s,s_0$ non-constant, the assumption on $s,s_0$ in \eqref{eq:s-lower-bound} implies that there is a constant $c_{s,s_0}$ such that $\lambda:\Omega\to \mathbb{R}$ satisfies
\begin{equation}\label{eq:lambda-bounds}
0 < c_{s,s_0}\leq \mathrm{essinf}_{\vx\in \Omega}\lambda(\vx ) \le \mathrm{esssup}_{\vx\in \Omega} \lambda(\vx ) <\infty.
\end{equation}
Therefore, equilibrium deformations of LCNs can be programmed by design of $\vm$, $s$ and $s_0$ \cite{aharoni2018universal, plucinsky2018patterning,plucinsky2016programming,plucinsky2018actuation}. We explore this feature in this and our companion paper \cite{bouck2022computation}.

\subsection{3D elastic energy: neo-classical energy}
In the context of LCNs, the starting point is the neo-classical energy density of \emph{incompressible} nematic elastomers derived by Bladon, Warner and Terentjev \cite{bladon1994deformation,warner2007liquid,warner2003thermal}. For $\vu:\calB \to \mathbb{R}^3$, the 3D energy evaluated at $\vu$ is 
\begin{equation}\label{eq:3D-energy}
E_{3D,t}[\vu] = \int_{-t/2}^{t/2}\int_{\Omega} W_{3D} \big((\vx ,z), \nabla\vu \big)d\vx dz,
\end{equation}
where the 3D energy density, known as the trace formula, is defined by
\begin{equation}\label{eq:neo-classical}
W_{3D} \big((\vx ,z), \vF\big) = \big|\vL_{\vn}^{-1/2}\vF\vL_{\vm}^{1/2}|^2 - 3,
\end{equation}
where $\vF\in\mathbb{R}^{3\times3}$ satisfies 
\begin{equation}\label{eq:incompressible}
\det \vF =1
\end{equation}
due to the assumption of incompressibility. 
The step length tensors in the reference and deformed configurations, denoted by $\vL_{\vm},\vL_\vn$, are defined as follows:
\begin{equation}\label{eq:step-length}
\vL_\vm:=(s_0+1)^{-1/3}\big(\Id_3+s_0\wh{\vm}\otimes\wh{\vm}\big),\quad \vL_{\vn}:=(s+1)^{-1/3}\big(\Id_3+s\vn\otimes\vn\big),
\end{equation}
with $\Id_3$ the identity matrix in $\mathbb{R}^3$. Note that $\vL_\vn,\vL_\vm$ are symmetric positive definite due to \eqref{eq:s-lower-bound}, so $\vL_\vn^{-1/2},\vL_\vm^{1/2}$ are well-defined.
These tensors reflect mathematically the influence of liquid crystals molecules on the network deformations when they are multiplied to deformation gradients $\vF=\nabla\vu$ in \eqref{eq:neo-classical}. Since $\vm,s,s_0$ may not be constant in $\Omega$, then the energy density in \eqref{eq:neo-classical} has a dependence on coordinates $(\vx ,z)\in\calB$. The presence of the constant $-3$ in \eqref{eq:neo-classical} ensures non-negativity of $W_{3D}$. Its role will be explained in Section \ref{sec:properties-str-energy}.

Moreover, in the specific case where $s=s_0=0$, the material becomes isotropic and the step length tensors in \eqref{eq:step-length} reduce to the identity $\Id_3$: $\vL_{\vm} = \vL_{\vn} = \Id_3$. Consequently, the trace formula simplifies to the classical incompressible neo-Hookean energy density $W^H_{3D}(\vF) = |\vF|^2- 3$ for rubber elasticity.

A $2D$ membrane model can be derived via formal asymptotics of $\lim_{t\to0} \frac{1}{t} E_{3D,t}$ after incorporating the kinematic constraint \eqref{eq:kinematic-cons}. We omit such derivation and refer to our accompanying work \cite{bouck2022computation} for a detailed discussion. The derivation is inspired by asymptotics in \cite{ozenda2020blend}, where the additional inextensibility constraint is imposed for $\vy = \vu(\cdot,0)$ and a blend of stretching and bending energy is obtained with different scaling of $t$. Moreover, \cite{cirak2014computational} derives a $2D$ energy density by taking an infimum of $W_{3D}$ over the third column of $\vF$ under the incompressibility constraint.

Next, we present the $2D$ membrane model of LCNs under consideration and state the main mathematical problem of this work.

\subsection{Problem statement: a membrane model}
The $2D$ membrane model consists of the following formal minimization problem: find $\vy^{\ast}\in H^1(\Omega; \mathbb{R}^3)$ such that
\begin{equation}\label{eq:stretching-energy}
  \vy^{\ast} \in \argmin_{\vy\in H^1(\Omega; \mathbb{R}^3)}E[\vy],
  \quad E[\vy]:= \int_\Omega W(\vx ,\nabla\vy)d\vx ,
\end{equation}
where the stretching energy density $W$ is only a function of $\vx\in \Omega$ and the first fundamental form $\I[\vy] := \nabla\vy^T\nabla\vy$ of the surface $\vy(\Omega)$ and is given by
\begin{equation}\label{eq:stretching-energy-density}
W(\vx ,\nabla\vy)= \bigg|\vL_{\vn[\vy]}^{-1/2}\big[\nabla \vy, \, \vb[\vy]\big]\vL_\vm^{1/2}\bigg|^2 - 3 \, ,
\end{equation}
where 
\begin{equation}\label{eq:shortenings}
\vn[\vy] := \frac{\nabla\vy\, \vm }{|\nabla\vy\,\vm |}, \quad J[\vy] := \det \I[\vy], \quad \vb[\vy] := \frac{\partial_1\vy\times\partial_2\vy}{J[\vy]}.
\end{equation}
Note that if $J[\vy],|\nabla\vy\,\vm |$ are bounded away from $0$, then $\int_\Omega W(\vx ,\nabla\vy)\text{} d\vx $ is finite.
We also point out that \eqref{eq:stretching-energy-density} is consistent with the stretching energy in \cite{ozenda2020blend} after additionally assuming an inextensibility constraint $J[\vy] = 1$ and incorporating the multiplicative parameter $\lambda$ and the constant $-3$.

An important warning about \eqref{eq:stretching-energy} is in order: the energy density \eqref{eq:stretching-energy-density} is not convex, which raises the question of well-posedness of \eqref{eq:stretching-energy}. In fact, we construct an explicit example in Section \ref{sec:properties-str-energy} that shows that $E$ is not weakly lower semicontinuous in $H^1(\Omega;\mathbb{R}^3)$. Therefore a direct minimization of $E$ may create or produce wrinkling and creasing, thereby leading to microstructure that we do not study in this paper. Instead, we propose a numerical regularization mechanism inspired by a bending energy that suppresses such oscillations and allows the stretching energy to drive the LCN membrane towards a preferred heterogeneous metric - the so-called target metric. For deformations $\vy\in H^1(\Omega;\mathbb{R}^3)$, this target metric condition is equivalent to $E[\vy]=0$; see Corollary \ref{cor:minimizers}. We also prove that if there is a deformation $\vy\in H^2(\Omega;\mathbb{R}^3)$ such that $E[\vy]=0$, then our algorithm computes asymptotically a (possibly different) $\vy^*\in H^2(\Omega;\mathbb{R}^3)$ such that $E[\vy^*] = 0$. This admits an important physical interpretation: $\vy\in H^2(\Omega;\mathbb{R}^3)$ with $E[\vy]=0$ is a configuration with zero membrane energy and finite bending energy. However, our algorithm is able to compute situations that fail to satisfy this regularity assumption, and yet are physically relevant. We refer to the degree $3/2$ defect later in Section \ref{sec:lc-defects} and to our companion paper \cite{bouck2022computation} for a discussion of numerous such situations.

Throughout this work, we do not impose any boundary condition so that the materials under consideration have free boundary conditions (with some abuse of language). If necessary, one can take Dirichlet boundary conditions into account with a simple modification on theories and simulations.

\subsection{Discretizations and our contributions}
There are some works in the literature about numerical analysis of methods for LCNs/LCEs. FEMs are utilized for computations of $3D$ models in \cite{conti2002soft,chung2017finite}, and in \cite{cirak2014computational} for a membrane model of nematic glasses but without a numerical analysis. In \cite{luo2012numerical,bartels2022nonlinear}, mixed FEMs (for deformations and directors) are designed for various $2D$ models of LCEs.

In this work, we propose a FEM discretization to \eqref{eq:stretching-energy}. To the best of our knowledge, this is the first numerical method with a convergence analysis for this model. We consider a continuous $\mathcal{P}_1$ Lagrange finite element approximation $\vy_h$ of the deformation. To define a discrete energy, we replace $\vy$ in \eqref{eq:stretching-energy} by $\vy_h$ and then add a \emph{regularization} term that mimics a higher order bending energy
\begin{equation*}
R_h[\vy_h] := c_rh^2\sum_{e\in\mathcal{E}_h}\frac{1}{h}\int_e \big|\jump{\nabla \vy_h} \big|^2
\end{equation*}
to deal with the non-convexity of $E$. This regularization term is a scaled $L^2$ norm of jumps $\jump{\nabla \vy_h}$ along all the edges $e\in\mathcal{E}_h$ of shape-regular meshes $\Th$, and it is critical for the proof convergence of minimizers of the discrete energy
\begin{equation}\label{eq:intro-discrete-energy}
E_h[\vy_h] := E[\vy_h] + R_h[\vy_h]
\end{equation}
in Section \ref{sec:numerical-analysis}. This proof requires the construction of a recovery sequence $\vy_h$ for $\vy\in H^2(\Omega;\mathbb{R}^3)$ with the desired energy scaling $E_h[\vy_h]\lesssim h^2$, as well as a compactness result; the energy scaling is confirmed computationally. We also extend our theory to piecewise $H^2$ deformations $\vy$ that corresponds to non-isometric origami structures. Moreover, in order to solve the discrete minimization problem, we design a nonlinear gradient flow scheme that embeds a Newton sub-iteration solving the nonlinear discrete equation at each step of the flow. This scheme is energy decreasing, and  efficient under mild conditions on the pseudo time step $\tau$.

The rest of this article is organized as follows. In Section \ref{sec:properties-str-energy}, we discuss properties of the $2D$ model, in particular the non-degeneracy of \eqref{eq:stretching-energy-density} and their related consequences. We also construct an explicit example illustrating the lack of weak lower semicontinuity of the stretching energy and discuss our strategy to deal with it. In Section \ref{sec:discrete-min-pb}, we introduce the discrete version of \eqref{eq:stretching-energy} and the nonlinear gradient flow scheme to solve it. In Section \ref{sec:numerical-analysis}, we present a convergence analysis of discrete minimizers, our main contribution, and in turn show useful technical tools, the construction of a recovery sequence, a compactness argument and an extension to piecewise $H^2$ deformations. 
The main technical difficulty in proving the desired energy scaling is that $W(\vx,\nabla\vy(\vx))\to\infty$ as $J[\vy(\vx)]\to0$. To avoid the singularity, we need a pointwise lower bound of $\det\nabla\vy_h^T\nabla\vy_h$ uniform in $h$. Since the space $H^2(\Omega)$ has the same Sobolev number as $W^{1,\infty}(\Omega)$ for $\Omega\subset\mathbb{R}^2$, whence it does not embed compactly, and $\vy\mapsto J[\vy]$ is not concave, interpolating directly or after convolution may be problematic. To overcome this challenge in Section \ref{S:prelim-energy-scaling}, we employ a Lusin truncation of Sobolev functions, motivated by \cite{friesecke2002theorem}, which provides both the desired lower bound of $J[\vy_h]$ and first order convergence in $H^1(\Omega)$. This approach may be useful for other critical nonlinear problems. We note that a Lusin truncation of $W^{1,1}_0(\Omega)$ functions has been used in numerical analysis for incompressible fluids with an implicit constitutive law \cite{diening2013finite}.
Finally, we conclude in Section \ref{sec:simulations} with numerical simulations, including experiments with origami shapes, a quantitative study for the convergence of the proposed method, and an example with a stable defect of degree $3/2$ that goes beyond the theory.

\section{Properties of the stretching energy}\label{sec:properties-str-energy}

This section is dedicated to proving some properties of the stretching energy \eqref{eq:stretching-energy}, which will be useful later in Section \ref{sec:numerical-analysis}. 
In view of definition \eqref{eq:step-length} for step length tensors, we first observe that $\vL_\vm$ can be equivalently expressed  as follows in the orthonormal basis $\{\hat{\vm}, \hat{\vm}_\perp, \ve_3\}$:
\begin{equation}\label{eq:Lm-rewrite}
\vL_\vm = (s_0+1)^{2/3}\wh{\vm}\otimes \wh{\vm} +(s_0+1)^{-1/3}\wh{\vm}_\perp\otimes \wh{\vm}_\perp+ (s_0+1)^{-1/3}\ve_3\otimes \ve_3.
\end{equation}
Likewise $\vL_\vn$ may be expressed in the basis $\{\vn,\vv_1,\vv_2\}$ for orthonormal vectors $(\vv_1,\vv_2)$ spanning the space orthogonal to $\vn$:
\begin{equation}\label{eq:Ln-rewrite}
\vL_\vn = (s+1)^{2/3}\vn\otimes \vn +(s+1)^{-1/3}\vv_1\otimes \vv_1+ (s+1)^{-1/3}\vv_2\otimes \vv_2.
\end{equation}
The assumptions \eqref{eq:s-lower-bound} together with $s_0,s\in L^\infty(\Omega)$ imply that the eigenvalues of $\vL_\vm$ and $\vL_\vn$ are bounded away from $0$ and $\infty$, and $\vL_\vm, \vL_\vn$ are thus invertible.
Moreover, \eqref{eq:Ln-rewrite} provides an explicit inverse for $\vL_\vn$
\begin{equation*}
\vL_\vn^{-1} = (s+1)^{-2/3}\vn\otimes \vn +(s+1)^{1/3}\vv_1\otimes \vv_1+ (s+1)^{1/3}\vv_2\otimes \vv_2 \, ,
\end{equation*}
or equivalently
\begin{equation}\label{eq:Ln-inv}
\vL_\vn^{-1} = (s+1)^{1/3}\left(\Id_3 -\frac{s}{s+1}\vn\otimes \vn\right).
\end{equation}

\subsection{Coercivity}\label{sec:str-energy-coercivity}
In this subsection we show coercivity of the stretching energy.

\begin{proposition}[coercivity]\label{prop:coercivity}
There exists $C(s,s_0)>0$ such that the stretching energy $E$ defined in \eqref{eq:stretching-energy} satisfies
\begin{equation}\label{eq:str-coercivity}
  C(s,s_0)\left(\Vert\nabla \vy\Vert_{L^2(\Omega;\mathbb{R}^{3\times2})}^2+\Vert J[\vy]^{-1/2}\Vert_{L^2(\Omega)}^2\right)-3|\Omega| \leq E[\vy]
  \quad\forall \, \vy\in H^1(\Omega;\mathbb{R}^3).
  \end{equation}
\end{proposition}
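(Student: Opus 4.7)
The plan is to reduce the inequality to a pointwise matrix estimate obtained from the spectral representations \eqref{eq:Lm-rewrite}--\eqref{eq:Ln-inv}, and then to identify the Frobenius norm $\bigl|[\nabla\vy,\vb[\vy]]\bigr|^2$ with $|\nabla\vy|^2 + J[\vy]^{-1}$, at which point the claim is immediate after integration.

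First, I would derive a pointwise lower bound of the form $\bigl|\vL_{\vn}^{-1/2}\vA\,\vL_\vm^{1/2}\bigr|^2 \geq c(s,s_0)\,|\vA|^2$ valid for every $\vA\in\mathbb{R}^{3\times3}$. From \eqref{eq:Lm-rewrite}, the smallest eigenvalue of $\vL_\vm^{1/2}$ is $\min\bigl\{(s_0+1)^{1/3},(s_0+1)^{-1/6}\bigr\}$, and from \eqref{eq:Ln-inv}, the smallest eigenvalue of $\vL_\vn^{-1/2}$ is $\min\bigl\{(s+1)^{-1/3},(s+1)^{1/6}\bigr\}$; by \eqref{eq:s-lower-bound} and $s_0,s\in L^\infty(\Omega)$, both quantities are bounded below by a positive constant a.e.\ in $\Omega$. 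Using cyclicity of trace, for any symmetric positive definite matrices $\vM,\vN$ and any $\vA$ one has $|\vM\vA\vN|^2 = \tr(\vN\vA^T\vM^2\vA\vN) \geq \sigma_{\min}(\vM)^2 \sigma_{\min}(\vN)^2 |\vA|^2$, which gives the claimed pointwise bound with $c(s,s_0)$ equal to the product of the squared spectral bounds above.

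Next, I would evaluate the Frobenius norm of the augmented matrix with $\vA = [\nabla\vy,\vb[\vy]]\in\mathbb{R}^{3\times3}$. Since the squared Frobenius norm is the sum of squared column norms, and since $|\partial_1\vy\times\partial_2\vy|^2 = |\partial_1\vy|^2|\partial_2\vy|^2 - (\partial_1\vy\cdot\partial_2\vy)^2 = J[\vy]$, the definition of $\vb[\vy]$ in \eqref{eq:shortenings} yields $|\vb[\vy]|^2 = J[\vy]^{-1}$ and hence $\bigl|[\nabla\vy,\vb[\vy]]\bigr|^2 = |\nabla\vy|^2 + J[\vy]^{-1}$.

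Combining the two steps with the definition \eqref{eq:stretching-energy-density} of $W$ and integrating over $\Omega$ gives
\begin{equation*}
E[\vy] + 3|\Omega| \geq c(s,s_0)\int_\Omega \bigl(|\nabla\vy|^2 + J[\vy]^{-1}\bigr)d\vx = c(s,s_0)\bigl(\|\nabla\vy\|_{L^2(\Omega;\mathbb{R}^{3\times2})}^2 + \|J[\vy]^{-1/2}\|_{L^2(\Omega)}^2\bigr),
\end{equation*}
which is \eqref{eq:str-coercivity}. I do not expect a serious obstacle: the only subtlety is purely bookkeeping of the spectral bounds, and the bound is trivially valid if $J[\vy]$ vanishes on a set of positive measure since then the right-hand side is $+\infty$ while $W(\cdot,\nabla\vy)$ is itself $+\infty$ there.
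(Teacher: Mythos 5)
Your proposal is correct and follows essentially the same route as the paper: a pointwise spectral lower bound $\bigl|\vL_{\vn}^{-1/2}\vA\,\vL_\vm^{1/2}\bigr|^2\geq c(s,s_0)|\vA|^2$ from the explicit eigenvalues of the step-length tensors, then the column decomposition $\bigl|[\nabla\vy,\vb]\bigr|^2=|\nabla\vy|^2+|\vb|^2$ with $|\vb|^2=J[\vy]^{-1}$, and integration. The only cosmetic difference is that you justify the spectral bound via cyclicity of the trace while the paper merely cites the elementary matrix inequality.
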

\begin{proof}
Recall the expressions \eqref{eq:stretching-energy} and \eqref{eq:stretching-energy-density},
\begin{equation*}
E[\vy] =\int_\Omega \left(\left|\vL_\vn^{-1/2}[\nabla\vy,\vb]\vL_\vm^{1/2}\right|^2- 3\right)\text{ }d\vx .
\end{equation*}
where $\vn = \vn[\vy]$ and $\vb = \vb[\vy]$ as in \eqref{eq:shortenings}. 
We now invoke an elementary result for any matrix $\vA\in \mathbb{R}^{d\times d}$ and a symmetric positive definite (SPD) matrix $\vB\in \mathbb{R}^{d\times d}$: $\left|\vA\vB\right|^2 \geq \lambda_{min}(\vB)^2|\vA|^2$ and $\left|\vB\vA\right|^2 \geq \lambda_{min}(\vB)^2|\vA|^2$ where $\lambda_{min}(\vB) = \min_{1\leq j\leq d}\{\lambda_j(\vB)\}>0$. 

These properties allow us to write the lower bound
\begin{equation*}
E[\vy] \geq \int_\Omega \left(\frac{\lambda_{min}(\vL_{\vm})}{\lambda_{max}(\vL_{\vn})}\left|[\nabla\vy,\vb]\right|^2- 3\right)\text{ }d\vx .
\end{equation*}
In view of the forms of $\vL_\vm,\vL_\vn$ in \eqref{eq:Lm-rewrite} and \eqref{eq:Ln-rewrite}, their eigenvalues are explicit, namely $(s_0+1)^{2/3},(s_0+1)^{1/3}$ and $(s+1)^{2/3}, (s+1)^{2/3}$ respectively. Recalling the assumptions on $s,s_0$ in \eqref{eq:s-lower-bound}, we have that there is a constant $C(s,s_0)>0$ such that $\frac{\lambda_{min}(\vL_{\vm})}{\lambda_{max}(\vL_{\vn})} \geq C(s,s_0)$ for a.e.\ $\vx\in \Omega$. Thus,
\begin{equation*}
E[\vy] \geq C(s,s_0)\int_\Omega\left|[\nabla\vy,\vb]\right|^2\text{ }d\vx - 3|\Omega|.
\end{equation*}
We observe that
\begin{equation}\label{eq:det-normal}
J[\vy] = |\partial_1\vy|^2|\partial_2\vy|^2 - (\partial_1\vy\cdot\partial_2\vy)^2 = |\partial_1\vy\times\partial_2\vy|^2
\end{equation}
due to the cross product identity $|\va\times\vb|^2 = |\va|^2|\vb|^2 - (\va\cdot\vb)^2$ for vectors $\va,\vb\in \mathbb{R}^3$.
As a consequence of the above formula with the definition of $\vb[\vy]$ in \eqref{eq:shortenings}, we have that $|\vb|^2 = J[\vy]^{-1}$. Realizing this fact completes the proof.
\end{proof}

\subsection{Non-degeneracy and global minimizers}

Throughout this subsection, we prove some properties of $W$ that involve matrix properties.
If the second argument of $W$ is a generic matrix $\vF\in\mathbb{R}^{3\times 2}$ instead of $\nabla\vy$, we then follow \eqref{eq:shortenings} and define $\vb(\vF), J(\vF), \vn(\vF)$ as
\begin{equation}\label{eq:shortenings-F}
 \vn(\vF)  := \frac{\vF \, \vm }{|\vF\,\vm |}, \quad J(\vF)  := \det (\vF^T\vF) , \quad  \vb(\vF) := \frac{\vF_1\times\vF_2}{J(\vF) },
\end{equation}
where $\vF_1,\vF_2$ denotes the first and second columns of $\vF$ respectively.

We recall that the energy density in \eqref{eq:stretching-energy-density} can be rewritten as a neo-Hookean energy density:
\begin{equation}\label{eq:neo-Hookean}
W^H_{3D}(\vF) = |\vF|^2- 3
\end{equation}
where $\vF\in \mathbb{R}^{3\times3}$ (with a slight abuse of notation).
In particular, we want to exploit the relation between the neo-Hookean structure of the 2D stretching energy \eqref{eq:stretching-energy} to derive the non-degeneracy and properties of global minimizers. We also stress the importance of \eqref{eq:neo-Hookean} because it is critical for the energy scaling argument in Proposition \ref{prop:energy-scaling}. To see the non-negativity of \eqref{eq:neo-Hookean}, we first observe that a basic linear algebra argument exploiting the eigenvalues of $\vF^T\vF$ yields $W^H_{3D}(\vF) = |\vF|^2- 3\ge0$ provided $\det \vF = 1$. 

More precisely, $W^H_{3D}(\vF)$ is non-degenerate in the sense that it is bounded from below by $\mathrm{dist}(\vF,SO(3))^2 := \inf_{\vR\in SO(3)}|\vF - \vR|^2$. We now state and prove lower and upper bounds for $W^H_{3D}(\vF)$. The former can also be found in \cite[Proposition A.3]{plucinsky2018actuation}. The latter will be used in the numerical analysis in Lemma \ref{lem:expansion}.
\begin{proposition}[bounds for $W^H_{3D}(\vF)$]\label{prop:nondegen-neo-Hookean}
Let $\vF\in \mathbb{R}^{3\times3}$ satisfy $\det \vF = 1$. Then,
\begin{equation}\label{eq:nondegen-neo-Hookean}
\mathrm{dist}\big(\vF,SO(3)\big)^2 \leq |\vF|^2 - 3 \leq 3\;\mathrm{dist}\big(\vF,SO(3)\big)^2 \, .
\end{equation}
\end{proposition}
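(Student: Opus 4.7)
The plan is to reduce \eqref{eq:nondegen-neo-Hookean} to a purely algebraic statement about the singular values of $\vF$. Since $\det\vF = 1>0$, the polar decomposition $\vF = \vR\vU$ yields $\vR\in SO(3)$ and $\vU$ symmetric positive definite with $\det\vU = 1$; let $\lambda_1,\lambda_2,\lambda_3>0$ denote the eigenvalues of $\vU$, so that $\lambda_1\lambda_2\lambda_3 = 1$. Then $|\vF|^2 = \tr(\vU^2) = \sum_i \lambda_i^2$, and the classical fact that the nearest element of $SO(3)$ to $\vF$ in Frobenius norm is exactly the rotational factor $\vR$ gives $\mathrm{dist}(\vF,SO(3))^2 = |\vU-\Id_3|^2 = \sum_i (\lambda_i-1)^2$. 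Combining these two identities with the elementary expansion
\begin{equation*}
\sum_i \lambda_i^2 - 3 \;=\; \sum_i (\lambda_i-1)^2 \;+\; 2\Bigl(\sum_i \lambda_i - 3\Bigr),
\end{equation*}
recasts \eqref{eq:nondegen-neo-Hookean} as the pair of scalar inequalities $0 \le \sum_i \lambda_i - 3 \le \sum_i (\lambda_i - 1)^2$ under the constraint $\prod_i \lambda_i = 1$.

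The lower bound is then immediate from the AM--GM inequality $\sum_i\lambda_i \ge 3\sqrt[3]{\prod_i \lambda_i} = 3$, recovering the bound from \cite{plucinsky2018actuation}. For the upper bound I would show that $F(\lambda_1,\lambda_2,\lambda_3) := \sum_i \lambda_i^2 - 3\sum_i \lambda_i + 6 \ge 0$ on the constraint manifold $\{\prod_i\lambda_i = 1, \lambda_i > 0\}$ by constrained optimization. A short computation with Lagrange multipliers yields the stationarity condition that $2\lambda_i^2 - 3\lambda_i$ be independent of $i$, and a case analysis (all equal; exactly two equal; all distinct) narrows the admissible critical points to $\lambda_1=\lambda_2=\lambda_3=1$: the two-equal case $\lambda_1=\lambda_2=a$, $\lambda_3 = 1/a^2$ with $a+\lambda_3=3/2$ collapses to the cubic $2a^3 - 3a^2 + 2 = 0$, which has no positive real root, and the all-distinct case forces the inconsistent system $\lambda_i+\lambda_j = 3/2$ for every pair. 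Combined with the coercivity $F\to+\infty$ whenever any $\lambda_i \to 0^+$ or $\infty$ along the constraint (for instance, if $\lambda_1\to 0^+$ then $\lambda_2\lambda_3\to\infty$, forcing $\sum \lambda_i^2\to\infty$), this identifies $(1,1,1)$ as the unique global minimizer of $F$, where $F=0$.

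The main obstacle is the upper bound: unlike the lower bound, there is no termwise inequality available, because $\lambda^2 - 3\lambda + 2 = (\lambda-1)(\lambda-2)$ changes sign on $(1,2)$, so the incompressibility constraint $\prod_i\lambda_i = 1$ must be used globally through constrained optimization rather than pointwise. Once the polar decomposition has reduced the matrix problem to one about the three singular values, however, the remaining argument is elementary.
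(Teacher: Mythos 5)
Your proof is correct, and it shares the paper's skeleton: polar decomposition $\vF=\vR\vU$, reduction to the singular values $\lambda_1,\lambda_2,\lambda_3$ with $\lambda_1\lambda_2\lambda_3=1$, the identity $\sum_i\lambda_i^2-3 = \sum_i(\lambda_i-1)^2 + 2(\sum_i\lambda_i-3)$, and AM--GM (the paper's ``basic calculus argument'') for the lower bound. Where you differ is the upper bound, i.e.\ the inequality $\sum_i(\lambda_i-1)^2 \ge \sum_i\lambda_i-3$ on the constraint $\prod_i\lambda_i=1$. The paper's proof is purely algebraic: it normalizes so that $\lambda_3=1/(\lambda_1\lambda_2)\ge 1$, splits $\sum_i\lambda_i-3$ into two groups, bounds $\bigl(\tfrac{1}{\lambda_1\lambda_2}-1\bigr)-\bigl(1-\lambda_1\lambda_2\bigr) = \lambda_1\lambda_2\bigl(\tfrac{1}{\lambda_1\lambda_2}-1\bigr)^2 \le \bigl(\tfrac{1}{\lambda_1\lambda_2}-1\bigr)^2$, and finishes with Young's inequality applied to $-(\lambda_1-1)(\lambda_2-1)$. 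Your route instead treats the reduced statement as a constrained minimization of $F(\lambda)=\sum_i\lambda_i^2-3\sum_i\lambda_i+6$ over the manifold $\prod_i\lambda_i=1$, uses Lagrange multipliers to show $(1,1,1)$ is the only interior critical point (the two-equal case leads to $2a^3-3a^2+2=0$, which indeed has no positive root since the cubic has a global minimum value $1$ at $a=1$), and invokes coercivity of $F$ along the constraint to conclude the minimum is attained there. Both arguments are complete. The paper's algebraic decomposition is shorter and self-contained once you see the WLOG and the factorization, but it looks a bit like magic; your optimization argument is more systematic and would generalize to other symmetric functions of singular values, at the cost of a case analysis of critical points and a coercivity check at the ends of the constraint set. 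Your coercivity remark could be made a touch more explicit --- $\sum_i\lambda_i^2\to\infty$ does not by itself force $F\to\infty$ because of the $-3\sum_i\lambda_i$ term; the clean way to see it is to rewrite $F=\sum_i(\lambda_i-\tfrac32)^2-\tfrac34$ and note that some $\lambda_i\to\infty$ whenever the constraint is violated at the boundary --- but this is a presentational point, not a gap.
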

\begin{proof}
  Let $\vF\in \mathbb{R}^{3\times3}$ be such that $\det \vF = 1$. We first use the polar decomposition, $\vF = \vR\vU$ for $\vU$ (SPD) and $\vR\in SO(3)$, to write $|\vF|^2~ - ~3 = |\vR\vU|^2 - 3 = |\vU|^2 - 3$, and $\mathrm{dist}(\vR\vU,SO(3))^2 =\mathrm{dist}(\vU,SO(3))^2$.

1. {\it Lower bound}: It is thus sufficient to prove
\begin{equation*}
|\vU|^2 - 3\geq \mathrm{dist}(\vU,SO(3))^2.
\end{equation*}
Since $\vU$ is SPD there exists $\vQ\in SO(3)$ such that $\vU=\vQ^T\vLambda \vQ$ with $\vLambda$ a diagonal matrix with the eigenvalues $\lambda_1,\lambda_2,\lambda_3>0$ of $\vU$. Moreover, $\det\vU=1$ yields $\lambda_3= \frac{1}{\lambda_1\lambda_2}$, and $|\vU|=|\vLambda|$ implies
\begin{equation*}
|\vU|^2 - 3 = \lambda_1^2+\lambda_2^2 +\frac{1}{\lambda_1^2\lambda_2^2} - 3.
\end{equation*}
On the other hand, $\mathrm{dist}(\vU,SO(3)) = |\vU - \Id_3|$ because $|\vU-\vR|=|\vLambda- \vQ\vR\vQ^T|$ with $\vR\in SO(3)$ is minimized by $\vQ\vR\vQ^T=\Id_3$, whence $\vR=\Id_3$. Consequently,
\begin{align}
\mathrm{dist}(\vU,SO(3))^2 &= (\lambda_1-1)^2+(\lambda_2-1)^2 +\left(\frac{1}{\lambda_1\lambda_2}-1\right)^2 \label{eq:key-equality1}\\
& = |\vU|^2 - 3+ 2\left(3 - \lambda_1-\lambda_2-\frac{1}{\lambda_1\lambda_2}\right). \label{eq:key-equality2}
\end{align}
A basic calculus argument gives $\sup_{\lambda_1,\lambda_2>0}\left(3 - \lambda_1-\lambda_2-\frac{1}{\lambda_1\lambda_2}\right)\leq0$, whence
\begin{equation*}
\mathrm{dist}(\vU,SO(3))^2 \leq |\vU|^2 - 3,
\end{equation*}
and the lower bound is proved.

\medskip
2. {\it Upper bound}: In view of \eqref{eq:key-equality1} and \eqref{eq:key-equality2} it suffices to prove
\[
(\lambda_1-1)^2+(\lambda_2-1)^2 + \left(\frac{1}{\lambda_1\lambda_2}-1\right)^2\ge \lambda_1+\lambda_2+\frac{1}{\lambda_1\lambda_2}-3. 
\]
Without loss of generality, let us assume $\lambda_3 = \frac{1}{\lambda_1\lambda_2} \geq 1$ and write
\[
\lambda_1+\lambda_2 + \frac{1}{\lambda_1\lambda_2} -3
= \Big( \frac{1}{\lambda_1\lambda_2} - 1\Big) - \Big(1 - \lambda_1\lambda_2 \Big)
+ \lambda_1 + \lambda_2 - \lambda_1\lambda_2 - 1.
\]
The first two terms satisfy the following relation
\[
\Big( \frac{1}{\lambda_1\lambda_2} - 1\Big) - \Big(1 - \lambda_1\lambda_2 \Big)
= \lambda_1\lambda_2 \Big( \frac{1}{\lambda_1\lambda_2} - 1\Big)^2 \le \Big( \frac{1}{\lambda_1\lambda_2} - 1\Big)^2
\]
because $\lambda_1\lambda_2\le1$. The remaining terms, instead, obey the relation
\[
\lambda_1 + \lambda_2 - \lambda_1\lambda_2 - 1 = - (\lambda_1-1)(\lambda_2-1) \le (\lambda_1-1)^2 + (\lambda_2-1)^2
\]
by virtue of Young's inequality. This proves the desired upper bound. 
\end{proof}

The next nondegeneracy estimate will be useful for the subsequent discussion.

\begin{corollary}[nondegeneracy of stretching energy]\label{cor:stretching-nondegeneracy}
The stretching energy density $W(\vx ,\vF) = \left| \vL_\vn^{-1/2}[\vF,\,\vb]\vL_\vm^{1/2}\right|^2 - 3$ satisfies
\begin{equation}
W(\vx ,\vF)\geq  \mathrm{dist} \big(\vL_{\vn}^{-1/2}[\vF, \; \vb]\vL_{\vm}^{1/2},SO(3) \big)^2, 
\end{equation}
for all $\vF\in \mathbb{R}^{3\times2}$ such that $\mathrm{rank}(\vF)=2$, $\vn = \vn(\vF)$, and $\vb = \vb(\vF)$ as defined in \eqref{eq:shortenings-F}.
\end{corollary}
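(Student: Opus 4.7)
The plan is to reduce the statement directly to the lower bound in Proposition \ref{prop:nondegen-neo-Hookean} applied to the augmented $3\times3$ matrix $\vG := \vL_\vn^{-1/2}[\vF,\,\vb]\vL_\vm^{1/2}$. By definition $W(\vx,\vF) = |\vG|^2 - 3$, and the claimed inequality is exactly $|\vG|^2 - 3 \ge \mathrm{dist}(\vG, SO(3))^2$, so the only thing that needs to be verified to invoke Proposition \ref{prop:nondegen-neo-Hookean} is the constraint $\det \vG = 1$.

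To check this, I would factor the determinant as $\det \vG = \det(\vL_\vn^{-1/2}) \, \det[\vF,\,\vb] \, \det(\vL_\vm^{1/2})$ and compute each factor from the spectral representations \eqref{eq:Lm-rewrite} and \eqref{eq:Ln-rewrite}: the eigenvalues of $\vL_\vm$ are $(s_0+1)^{2/3},(s_0+1)^{-1/3},(s_0+1)^{-1/3}$, whose product is $1$, and similarly $\det \vL_\vn = 1$, hence $\det \vL_\vm^{1/2} = \det \vL_\vn^{-1/2} = 1$. For the middle factor, I would use the definition $\vb = (\vF_1\times\vF_2)/J(\vF)$ from \eqref{eq:shortenings-F} together with the identity $J(\vF) = |\vF_1\times\vF_2|^2$ established in \eqref{eq:det-normal}, which gives
\begin{equation*}
\det[\vF_1,\vF_2,\vb] \;=\; \vb\cdot(\vF_1\times\vF_2) \;=\; \frac{|\vF_1\times\vF_2|^2}{J(\vF)} \;=\; 1.
\end{equation*}
The rank-$2$ assumption on $\vF$ is what guarantees $J(\vF)>0$ so that $\vb$ (and hence $\vG$) is well defined.

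With $\det \vG = 1$ in hand, Proposition \ref{prop:nondegen-neo-Hookean} yields $|\vG|^2 - 3 \ge \mathrm{dist}(\vG, SO(3))^2$, which is precisely the claim. I do not anticipate a real obstacle here: the corollary is essentially a bookkeeping consequence of the neo-Hookean bound, and the only subtle point is recognizing that the specific normalization $\vb = (\vF_1\times\vF_2)/J(\vF)$ (as opposed to, say, the unit normal) is exactly what forces the augmented matrix into $SL(3)$ and therefore into the scope of Proposition \ref{prop:nondegen-neo-Hookean}.
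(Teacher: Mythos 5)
Your proof is correct and follows essentially the same route as the paper: reduce to the condition $\det\bigl(\vL_\vn^{-1/2}[\vF,\vb]\vL_\vm^{1/2}\bigr)=1$ by verifying $\det\vL_\vm^{1/2}=\det\vL_\vn^{-1/2}=1$ from the spectral forms and $\det[\vF_1,\vF_2,\vb]=\vb\cdot(\vF_1\times\vF_2)=1$ from the normalization $\vb=(\vF_1\times\vF_2)/J(\vF)$ with $J(\vF)=|\vF_1\times\vF_2|^2$, then invoke the lower bound in Proposition \ref{prop:nondegen-neo-Hookean}. Your explicit remark that $\mathrm{rank}(\vF)=2$ is what makes $J(\vF)>0$ and $\vb$ well defined is a helpful clarification that the paper leaves implicit.
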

\begin{proof}
Arguing similarly to the derivation \eqref{eq:det-normal} in the proof of Proposition \ref{prop:coercivity} (coercivity), we use the cross product identity $|\va\times\vb|^2 = |\va|^2|\vb|^2 - (\va\cdot\vb)^2$ for vectors $\va,\vb\in \mathbb{R}^3$ to deduce
\begin{equation*}
J(\vF) = |\vF_1|^2|\vF_2|^2 - (\vF_1\cdot\vF_2)^2 = |\vF_1\times\vF_2|^2
\end{equation*}
According to \eqref{eq:shortenings-F} and the above equality, we have $\vb(\vF) = \frac{\vF_1\times\vF_2}{|\vF_1\times\vF_2|^2}$. With this form of $\vb(\vF)$, we observe
\[
\det [\vF,\,\vb] =\det [\vF_1,\,\vF_2,\,\vb] = (\vF_1\times\vF_2)\cdot\vb = 1.
\]
Since $\det\vL_\vm = \det\vL_\vn^{-1} = 1$ in view of \eqref{eq:Lm-rewrite} and \eqref{eq:Ln-rewrite}, we can simply apply Proposition \ref{prop:nondegen-neo-Hookean} (bounds for $W^H_{3D}(\vF)$).
\end{proof}

\begin{remark}[special rotations]\label{rmk:str-SO3}
\rm
An important by-product of Corollary \ref{cor:stretching-nondegeneracy} is that any solution $\vy\in H^1(\Omega;\mathbb{R}^3)$ of $E[\vy] = 0$ must satisfy the pointwise relation
\[
\vL_{\vn}^{-1/2}[\nabla\vy, \; \vb]\vL_{\vm}^{1/2} \in SO(3)
\]
a.e. in $\Omega$ where $\vb = \frac{\partial_1\vy\times \partial_2\vy}{|\partial_1\vy\times \partial_2\vy|^2}$ is a scaled normal. This observation will turn out to be useful later in the proof of Proposition \ref{prop:energy-scaling}.
\end{remark}

The next proposition states a known fact in the physics literature \cite{modes2011gaussian, warner2007liquid}, namely that minimizing the stretching energy pointwise is equivalent to satisfying a metric condition; we prove and discuss this result in \cite{bouck2022computation}. We refer to \cite[Appendix A]{plucinsky2018actuation} for a similar result, but for a related 3 dimensional model. We also refer to \cite[Theorem 1.13]{plucinsky2018actuation} for result showing that the metric condition arises from a vanishing thickness limit of a 3D energy at the bending energy scaling.

\begin{proposition}[target metric]\label{prop:target-metric} 
The stretching energy density $W(\vx ,\vF) = 0$ if and only if $\I(\vF)$ satisfies the metric condition $\I(\vF) = g$,  with $g\in\mathbb{R}^{2\times2}$ given by
\begin{equation}\label{eq:target-metric}
g = \lambda ^2 \vm \otimes\vm  +\lambda ^{-1}\vm _\perp\otimes\vm _\perp,
\end{equation}
and $\lambda$ defined in \eqref{eq:lambda} .
\end{proposition}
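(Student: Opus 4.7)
The plan is to first reduce the statement to a pointwise matrix identity. Combining Corollary \ref{cor:stretching-nondegeneracy} with the lower bound in Proposition \ref{prop:nondegen-neo-Hookean}, and using $|\vR|^2 = \tr(\vR^T \vR) = 3$ for any $\vR \in SO(3)$, the condition $W(\vx, \vF) = 0$ is equivalent to $\vL_\vn^{-1/2}[\vF, \vb]\vL_\vm^{1/2} \in SO(3)$, with $\vn = \vn(\vF)$ and $\vb = \vb(\vF)$. Since $\det \vL_\vm = \det \vL_\vn = 1$ by \eqref{eq:Lm-rewrite}--\eqref{eq:Ln-rewrite} and $\det[\vF, \vb] = 1$ (established in the proof of Corollary \ref{cor:stretching-nondegeneracy}), the orientation constraint is automatic, and membership in $SO(3)$ is equivalent to the symmetric identity
\begin{equation*}
[\vF, \vb]\vL_\vm [\vF, \vb]^T = \vL_\vn.
\end{equation*}

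The second step is to expand both sides in adapted orthonormal bases. Using \eqref{eq:Lm-rewrite} together with $[\vF, \vb]\wh{\vm} = \vF\vm$, $[\vF, \vb]\wh{\vm}_\perp = \vF\vm_\perp$, and $[\vF, \vb]\ve_3 = \vb$, the left-hand side reads
\begin{equation*}
(s_0+1)^{2/3}\, \vF\vm \otimes \vF\vm \; + \; (s_0+1)^{-1/3}\, \vF\vm_\perp \otimes \vF\vm_\perp \; + \; (s_0+1)^{-1/3}\, \vb \otimes \vb,
\end{equation*}
while the right-hand side is read from \eqref{eq:Ln-rewrite}. For the direction $\I(\vF) = g \Rightarrow W(\vx,\vF) = 0$, the metric condition yields $|\vF\vm|^2 = \lambda^2$, $|\vF\vm_\perp|^2 = \lambda^{-1}$, and $\vF\vm \cdot \vF\vm_\perp = 0$, so $\vF\vm = \lambda\vn$ and $\vF\vm_\perp = \lambda^{-1/2}\vv$ with $\vv$ a unit vector orthogonal to $\vn$; moreover $J(\vF) = \det g = \lambda$ gives $\vb = \lambda^{-1/2}\vnu$ with $\vnu := \vn \times \vv$ a unit normal to $\mathrm{range}(\vF)$. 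Substituting these expressions and using $\lambda^3 = (s+1)/(s_0+1)$, each term matches the corresponding term of $\vL_\vn$ in \eqref{eq:Ln-rewrite} with the basis choice $\{\vn, \vv_1, \vv_2\} = \{\vn, \vv, \vnu\}$.

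For the converse $W(\vx,\vF) = 0 \Rightarrow \I(\vF) = g$, decompose $\vF\vm = a\vn$ with $a = |\vF\vm|$, $\vF\vm_\perp = \alpha\vn + \beta\vv$ with $\vv \perp \vn$ in $\mathrm{range}(\vF)$, and $\vb = \gamma\vnu$ with $\vnu \perp \mathrm{range}(\vF)$. Equating the $\vn\otimes\vn$, $\vn\otimes\vv + \vv\otimes\vn$, $\vv\otimes\vv$, and $\vnu\otimes\vnu$ components of the matrix identity produces four scalar relations. The off-diagonal relation $(s_0+1)^{-1/3}\alpha\beta = 0$, combined with $\beta^2 = \lambda^{-1} > 0$ forced by the $\vv\otimes\vv$ relation, gives $\alpha = 0$, so $\vF\vm \perp \vF\vm_\perp$; the $\vn\otimes\vn$ relation then yields $a^2 = \lambda^2$, and together these three identities amount exactly to $\vF^T\vF = g$. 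The main technical burden is thus only the bookkeeping of the tensor expansion in the adapted bases; once this is carried out, each of the four scalar identities reduces algebraically to the defining relation \eqref{eq:lambda} for $\lambda$.
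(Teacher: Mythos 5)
The paper itself does not prove Proposition~\ref{prop:target-metric}; the authors explicitly defer the proof to the companion paper \cite{bouck2022computation}, so there is no in-text proof to compare against. Evaluating your proposal on its own: it is correct and fits cleanly into the framework this paper develops. The reduction of $W(\vx,\vF)=0$ to $\vL_\vn^{-1/2}[\vF,\vb]\vL_\vm^{1/2}\in SO(3)$ follows from Corollary~\ref{cor:stretching-nondegeneracy} and $|\vR|^2=3$ for $\vR\in SO(3)$, the equivalence with the symmetric identity $[\vF,\vb]\vL_\vm[\vF,\vb]^T=\vL_\vn$ (together with the automatic orientation condition since all the relevant determinants equal $1$) is sound, and the expansions of both sides in the adapted bases $\{\wh{\vm},\wh{\vm}_\perp,\ve_3\}$ and $\{\vn,\vv,\vnu\}$ are carried out correctly. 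Both implications check out: in the forward direction the eigenvalue matching reduces, as you say, to $\lambda^3=(s+1)/(s_0+1)$; in the converse the off-diagonal relation forces $\alpha=0$ (since $\beta^2=\lambda^{-1}>0$), and the remaining scalar equations give exactly $\vF^T\vF=g$ in the $\{\vm,\vm_\perp\}$ basis. One cosmetic remark: $\vb$ may equal $-\lambda^{-1/2}\vnu$ depending on orientation, but since only $\vb\otimes\vb$ enters the computation this is harmless and worth a one-clause acknowledgment.
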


A deformation $\vy\in H^1(\Omega;\mathbb{R}^3)$ is an {\it $H^1$} isometric immersion of $g$ provided $\I(\vy)=g$ a.e. in $\Omega$. Therefore, Proposition \ref{prop:target-metric} establishes an equivalence between isometric immersions and minimizers of $E$. We make this explicit next.

\begin{corollary}[immersions of $g$ are minimizers with vanishing energy]\label{cor:minimizers}
A deformation $\vy\in H^1(\Omega;\mathbb{R}^3)$ satisfies $\I[\vy]=\nabla\vy^T\nabla\vy=g$ a.e. in $\Omega$, if and only if $\vy$ minimizes $E$ over $H^1(\Omega;\mathbb{R}^3)$ with $E[\vy]=0$.
\end{corollary}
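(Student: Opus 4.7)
The plan is to derive this corollary directly from Proposition \ref{prop:target-metric} by lifting the pointwise equivalence $W(\vx,\vF)=0 \iff \I(\vF)=g$ from the integrand to the integral, using non-negativity of the stretching energy density.

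First I would verify that $E[\tilde\vy] \geq 0$ for every $\tilde\vy \in H^1(\Omega;\mathbb{R}^3)$. At points where $\nabla\tilde\vy$ has rank $2$, Corollary \ref{cor:stretching-nondegeneracy} gives
\[
W\big(\vx,\nabla\tilde\vy(\vx)\big) \ge \mathrm{dist}\big(\vL_{\vn}^{-1/2}[\nabla\tilde\vy,\vb[\tilde\vy]]\vL_{\vm}^{1/2},\,SO(3)\big)^2 \ge 0.
\]
At points where the rank drops, $J[\tilde\vy]$ vanishes and $|\vb[\tilde\vy]|^2 = J[\tilde\vy]^{-1}$ blows up, so $W$ is naturally $+\infty$ there. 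In either case $E[\tilde\vy] \geq 0$.

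For the forward direction, assume $\I[\vy] = g$ a.e. Since $\det g = \lambda^2 \cdot \lambda^{-1} = \lambda$, the lower bound \eqref{eq:lambda-bounds} yields $\det g \ge c_{s,s_0} > 0$ a.e., so $\nabla\vy$ has rank $2$ a.e.\ and both $\vn[\vy]$ and $\vb[\vy]$ are well-defined. Applying Proposition \ref{prop:target-metric} pointwise with $\vF = \nabla\vy(\vx)$ gives $W(\vx,\nabla\vy(\vx)) = 0$ a.e.; integration yields $E[\vy] = 0$, and combined with non-negativity this makes $\vy$ a global minimizer.

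Conversely, if $\vy$ minimizes $E$ with $E[\vy] = 0$, finiteness of the integrand a.e.\ forces $J[\vy] > 0$ and $|\nabla\vy\,\vm| > 0$ a.e. Non-negativity of $W$ together with $\int_\Omega W \, d\vx = 0$ then forces $W(\vx,\nabla\vy(\vx)) = 0$ a.e., and a pointwise application of Proposition \ref{prop:target-metric} yields $\I[\vy] = g$ a.e. I do not expect any real obstacle here; the only subtlety is controlling the measure-zero set where the rank of $\nabla\vy$ could degenerate, which is ruled out by $\det g > 0$ in one direction and by finiteness of $E[\vy]$ in the other.
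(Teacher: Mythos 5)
The paper itself gives no explicit proof of this corollary, treating it as an immediate consequence of Proposition \ref{prop:target-metric} ("We make this explicit next"), and your proposal simply writes out exactly that intended argument: pointwise equivalence from Proposition \ref{prop:target-metric}, non-negativity of the integrand from Corollary \ref{cor:stretching-nondegeneracy}, and the observation that $\det g=\lambda\ge c_{s,s_0}>0$ (resp.\ finiteness of the integral) keeps the deformation gradient of rank two a.e.\ in each direction. The proof is correct and matches the paper's approach.
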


Therefore, 
if the given data $\vm,s,s_0$ are such that $g$ admits an $H^1$ isometric immersion,
global minimizers of $E[\vy]$ over $H^1(\Omega;\mathbb{R}^3)$ are guaranteed to exist; otherwise, $E[\vy]$ may not vanish over $H^1(\Omega;\mathbb{R}^3)$. On the other hand, minimizers of $E[\vy]$ might not be unique, because $g$ could have many isometric immersions in general. From another point of view, this issue is also related to lack of convexity for $E[\vy]$. 

\subsection{Lack of weak lower semicontinuity in $H^1$}\label{sec:lack-of-convexity}
The lack of convexity of the stretching energy \eqref{eq:stretching-energy-density} translates into lack of weak lower semicontinuity of \eqref{eq:stretching-energy} and prevents one from using the direct method of calculus of variations to prove the existence of minimizers, and is also responsible for serious computational challenges.

To stress the importance of convexity or lack there-of, we present a modification of a classical 1D example known as the Bolza example \cite[Example 4.8]{dacorogna2007direct}; see also \cite[Example 2.1]{ball1998calculus}. We next extend this situation to 2D.

\begin{example}\label{ex:1D-sawtooth}
\rm We consider the double well energy defined on $W^{1,4}_0((0,1))$
\begin{equation}\label{eq:1d-double-well}
E_{1D}[u] = \int_0^1 \left((u')^2-1\right)^2+cu^2dx,
\end{equation}
with some nonnegative $c$, and define a sequence of sawtooth functions starting with 
\begin{equation*}
u_1(x) = \begin{cases} 
x,& \quad x<\frac{1}{2}\\
1-x, & \quad x\geq \frac{1}{2}.
\end{cases} 
\end{equation*}
To construct $u_2$, we subdivide the intervals $[0,1/2]$ and $[1/2,1]$ into $[0,1/4]$, $[1/4,1/2]$ and $[1/2,3/4]$, $[3/4,1]$ and then alternating the derivative between $\pm1$ on the 4 subintervals. The function $u_2$ is a sawtooth with derivative of $\pm1$ and maximum height $\frac{1}{4}$. Given $u_n$, we do the same subdividing procedure to get a $u_{n+1}$ to get a sawtooth of height $\frac{1}{2^{n+1}}$. The resulting sequence consists of $u_n$ that satisfy $|u_n'(x)|=1$. The first few elements are plotted in Figure \ref{fig:1d-sawtooth}. The sequence $u_n\rightharpoonup 0$ in $W^{1,4}((0,1))$, but
\begin{equation*}
0 = \lim_{n\to \infty} E_{1D}[u_n] < E_{1D}[0] = 1
\end{equation*}
Thus, the energy $E_{1D}$ is not weakly lower semicontinuous on $W^{1,4}$, and if $c>0$ the direct method of the calculus of variations would fail to provide the existence of a minimizer. If $c=0$, then any $u_n$ is a minimizer to $E_{1D}$ over $W^{1,4}$.
\begin{figure}[H]
\caption{Example \ref{ex:1D-sawtooth}: First four elements $u_n$ of the minimizing sequence of \eqref{eq:1d-double-well}.}\label{fig:1d-sawtooth}
\begin{center}
\includegraphics[width=.5\textwidth]{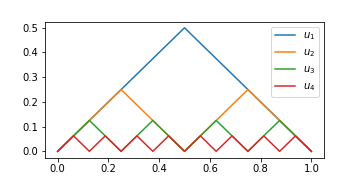}
\end{center}
\end{figure}
\end{example}

On the discrete level, the above example is also important, because the lack of convexity means that a standard weak compactness result in $H^1$ will not be enough to prove convergence of minimizers. We shall see that $E$ is not weakly lower semicontinuous in $H^1(\Omega;\mathbb{R}^3)$.
To illustrate this point, we present an example of some minimizers to $E$ that extends Example \ref{ex:1D-sawtooth} to 2D. The first element of the sequence of minimizers is a pyramid from \cite{modes2011blueprinting}. We later display several pyramid configurations in Section \ref{sec:pyramids} computed with our FEM.
\begin{example}\label{ex:pyramid-sawtooth}
\rm Let $\Omega = [-1,1]^2$ and let $\vm$ be the blueprinted director field depicted in Figure \ref{fig:pyramid-sawtooth}(a) and let $\vy_1$ be the solution in Figure \ref{fig:pyramid-sawtooth}(b) with $\lambda<1$. The surface $\vy_1(\Omega)$ is a square pyramid with base width $2\lambda$ and height $\sqrt{\lambda^{-1}-\lambda^{2}}$, and first fundamental form $\I[\vy]=g$ with target metric $g$ given by \eqref{eq:target-metric}. We can mimic the subdivision procedure of Example \ref{ex:1D-sawtooth} to produce a sequence $\vy_n$ such that $\I[\vy_n] = g$, and $\vy_n\rightharpoonup \vy^{\ast}$ in $H^1(\Omega;\mathbb{R}^3)$, where $\vy^{\ast}(\vx ) = (\lambda x_1,\lambda x_2,0)$. The first three elements of the sequence are displayed in Figure \ref{fig:pyramid-sawtooth} (b-d). Since $\I[\vy_n] = g$, we deduce $E[\vy_n]=0$ for all $n$, according to Corollary \ref{cor:minimizers} (immersions of $g$ are minimizers). Moreover, $\I[\vy^{\ast}] = \lambda^2\Id_2\neq g$ a.e.\ in $\Omega$ because $\lambda\neq1$. Inserting $\nabla \vy^{\ast}$ into $W$ yields $W(\vx ,\nabla\vy^{\ast})>0$ a.e.\ in $\Omega$ due to Proposition \ref{prop:target-metric} (target metric), whence
\begin{equation*}
\liminf_{n\to\infty} E[\vy_n] = 0 < E[\vy^{\ast}] .
\end{equation*}
We thus conclude that $E$ is not weakly lower semicontinuous in $H^1(\Omega;\mathbb{R}^3)$.
\end{example}

\begin{figure}[H]
  \caption{Example \ref{ex:pyramid-sawtooth}: Blueprinted director field $\vm$ and first three elements $\vy_n(\Omega)$ of a minimizing sequence
  with foldings on dyadic squares concentric with $\Omega$.}\label{fig:pyramid-sawtooth}
  \begin{minipage}{.15\textwidth}
\begin{center}
\begin{tikzpicture}[scale=1]
    \draw (1,-1) -- (1,1) -- (-1,1) -- (-1,-1) -- (1,-1);
    \draw (-1,-1) -- (1,1);
    \draw (-1,1) -- (1,-1);
    \draw [-to](.75,-.25) -- (.75,.25);
    \draw [-to](.25,.75) -- (-.25,.75);
    \draw [-to](-.75,.25) -- (-.75,-.25);
    \draw [-to](-.25,-.75) -- (.25,-.75);
\end{tikzpicture}
\end{center}
\end{minipage}\hfill
\begin{minipage}{.28\textwidth}
\begin{tikzpicture}[scale=.42]
\begin{axis}[view = {30}{50},zmin=0,zmax=1.32]
 
\addplot3 [
    domain =-.5:.5,
    domain y = -.5:.5,
    samples = 32,
    samples y = 32,
    surf] {1.32-  2.64*max( abs(x),abs(y))};
 
\end{axis}
 
\end{tikzpicture}
\end{minipage}\hfill
\begin{minipage}{.28\textwidth}
\begin{tikzpicture}[scale=.42]
 
\begin{axis}[view = {30}{50},zmin=0,zmax=1.32]
 
\addplot3 [
    domain =-.5:.5,
    domain y = -.5:.5,
    samples = 32,
    samples y = 32,
    surf] {min(2.64*max( abs(x),abs(y) ),1.32-  2.64*max( abs(x),abs(y)))};
 
\end{axis}
 
\end{tikzpicture}
\end{minipage}\hfill
    \begin{minipage}{.28\textwidth}
        \begin{tikzpicture}[scale=.42]
            
            \begin{axis}[view = {30}{50},zmin=0,zmax=1.32]
            
                \addplot3 [
    domain =-.5:.5,
    domain y = -.5:.5,
    samples = 32,
    samples y = 32,
                surf] {1.32*max(min(.5-max( 2*abs(x),2*abs(y) ),max( 2*abs(x),2*abs(y) )),min(1-max( 2*abs(x),2*abs(y) ), max( 2*abs(x),2*abs(y) )-.5))};
                
            \end{axis}
            
        \end{tikzpicture}
    \end{minipage}
\end{figure}
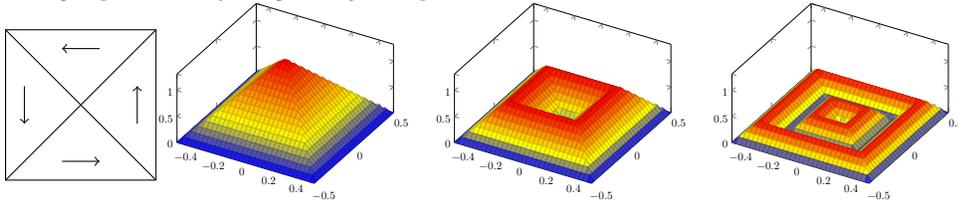
%

Additionally, we note that the relevant convexity notion for $W(\vx,\cdot)$ is quasiconvexity. We refer to the book \cite{dacorogna2007direct} for background on this topic.

There are numerous strategies to treat non-convexity in numerical methods for nonconvex energies. For an introduction on discretizations for nonconvex variational problems, we refer to \cite[Chapter 9]{bartels2015numerical}. For this paper, we choose to regularize the stretching energy with the expectation that regularization provides a stronger compactness result to get around the issue of lack of weak lower semicontinuity. We note that this strategy has been used before in the study of LCE/LCNs \cite{cirak2014computational}.
The model of \cite{cirak2014computational} utilizes the regularized energy
\begin{equation*}
\int_\Omega W(\vx ,\nabla \vy) + \varepsilon |\divrg \nabla \vy|^2,
\end{equation*}
where $\varepsilon>0$ is a positive fixed constant. This is a dimensionally reduced model from the 3D model of \cite{bhattacharya1999theory}, which incorporates a Hessian term to the energy.

We are interested in the membrane model and would like to recover the target metric in the limit. We consider the regularized energy
\begin{equation}\label{eq:regularization}  
E_\varepsilon[\vy] = E[\vy] + \varepsilon \int_\Omega \big| D^2\vy \big|^2,
\end{equation}
where $\varepsilon$ scales likes $h^2$. One may view this as analogous to a higher order bending term. This kind of energy blending is studied by \cite{ozenda2020blend}. The perspective of the regularization acting like a higher order bending term motivates the choice of $\varepsilon\approx h^2$. In fact, the best energy scaling one can expect of $E$ is $h^2$ due to the $H^2$ regularity of zero energy states of $E$ under Assumption \ref{as:H2-immersibility} (regularity), and the regularization term balances with the stretching energy scaling. One may also consider a more physical bending energy. A recent example of a bending theory for LCEs is \cite{bartels2022nonlinear}.

Another approach would be to compute minimizers of an effective energy, whose energy density is the quasiconvex envelope of $W_{str}$. In LCNs/LCEs, the authors of \cite{cesana2015effective} explicitly compute a quasiconvex energy density for a membrane energy of LCEs, and the authors of \cite{desimone2002macroscopic} compute minimizers of a relaxed 3D energy for LCEs where the effective energy is known. If the quasiconvex envelope is unknown, then one can approximate the envelope, as done for LCNs/LCEs in \cite{conti2018adaptive}. Computing the quasiconvex envelope of $W_{str}$ is outside the scope of this paper.

\section{The discrete minimization problem}
\label{sec:discrete-min-pb}
This section introduces the discrete minimization and proposes a discrete gradient flow as a solver.

\subsection{Discrete energies}
Let $\mathcal{T}_h$ be a shape regular sequence of meshes with maximum mesh size $h$. We denote by $\mathcal{E}_h$ the set of interior edges to the mesh, and by $\mathcal{N}_h$ the set of nodes of the mesh. The space for discrete deformations consists of continuous piecewise linear functions:
\begin{equation}
\mathbb{V}_h := \{ \vy_h\in C^0(\Omega;\mathbb{R}^3) : \quad \vy_h|_T \in \mathcal{P}_1 \quad\forall \; T\in \mathcal{T}_h\}.
\end{equation}
We propose the regularized discrete energy $E_h: \mathbb{V}_h\to \mathbb{R}$ defined by
\begin{equation}\label{eq:discrete-energy}
E_h[\vy_h] =\int_\Omega W(\vx ,\nabla\vy_h) \, d\vx  + R_h[\vy_h]
\end{equation}
where the regularization term $R_h[\vy_h] := c_r h^2 |\vy_h|^2_{H^2_h(\Omega;\mathbb{R}^3)}$ is a rescaling of the DG discrete $H^2$-seminorm for continuous piecewise linear functions:
\begin{equation}\label{eq:discrete-H2}
|\vy_h|_{H^2_h(\Omega;\mathbb{R}^3)}^2 := \sum_{e\in\mathcal{E}_h}\frac{1}{h}\int_e |\jump{\nabla \vy_h}|^2,
\end{equation}
and $c_r:\Omega\to\mathbb{R}^+$ is a non-negative \emph{regularization} parameter of our choice.
The notation $\jump{\nabla \vy_h}$ denotes the jump of ${\nabla \vy_h}$ across edges $e\in\calE_h$
\begin{equation}
\jump{\nabla \vy_h} \big|_e = \nabla \vy_h^+ - \nabla \vy_h^-,
\end{equation}
where $\nabla \vy_h^{\pm}(\vx ):=\lim\limits_{s\to0^+}\nabla \vy_h(\vx \pm s\vn_e)$ and $\vn_e$ is a unit normal vector to $e$ (the choice of its direction is arbitrary but fixed).
To justify that \eqref{eq:discrete-H2} is indeed a discrete $H^2$-seminorm, we argue heuristically as follows.
Since $\vy_h$ is elementwise affine, we view $H_h[\vy_h]|_e := \frac{\jump{\nabla \vy_h}|_e}{h}$ as a finite difference Hessian of $\vy_h$ across $e$. If one extends the definition of $H_h[\vy_h]|_e$ to elements $T\in\Th$ as a constant, namely, $H_h[\vy_h]^2|_T = \sum_{e\subset \partial T}\frac{1}{h}\int_e \frac{|\jump{\nabla\vy_h}|^2}{h^2}$, it satisfies the natural scaling $\int_T |H_h[\vy_h]|^2\approx h\int_e|H_h[\vy_h]|^2$ and results in
\begin{align*}
  |\vy_h|_{H^2_h(\Omega;\mathbb{R}^3)}^2 &= \sum_{e\in\mathcal{E}_h}\frac{1}{h}\int_e |\jump{\nabla \vy_h}|^2 = \sum_{e\in\mathcal{E}_h}h\int_e \big|H_h[\vy_h]\big|^2
  \\
  &\approx \sum_{T\in\mathcal{T}_h}\int_T \big|H_h[\vy_h]\big|^2 = \int_\Omega  \big|H_h[\vy_h]\big|^2.
\end{align*}

The regularization term $h^2\int_\Omega  \big|H_h[\vy_h]\big|^2$ thus mimics a higher order bending energy \eqref{eq:regularization} where $h$ is proportional to the thickness of a thin 3D body.

Our next task is to solve the discrete counterpart of \eqref{eq:stretching-energy}, namely
\begin{equation}\label{eq:minimization-pb-discrete}
\vy^{\ast}_h=\argmin_{\vy_h\in\mathbb{V}_h}E_h[\vy_h].
\end{equation} 
According to the discussions in Section \ref{sec:lack-of-convexity}, we can also expect lack of convexity and weak lower semicontinuity in the first term of $E_h[\vy_h]$ in \eqref{eq:discrete-energy}. These features brings the main difficulty to solve the discrete minimization problem \eqref{eq:minimization-pb-discrete} and to analyze convergence of $\vy_h^*$ towards a minimizer $\vy^*$ of \eqref{eq:stretching-energy}. These topics are discussed in Sections \ref{sec:nonlinear-gf} and \ref{sec:numerical-analysis}.

\subsection{Minimizing the discrete energy: nonlinear gradient flow}\label{sec:nonlinear-gf}

We design a nonlinear discrete gradient flow to find a solution to \eqref{eq:minimization-pb-discrete} in this subsection. 
Given $\vy^{i}_h~\in~ \V_h$ for any $i\ge0$, at each step of the iterative scheme we find $\vy^{i+1}_h\in \V_h$ that solves the minimization problem for the augmented functional $L^i_h$
\begin{align}\label{eq:gf-energy}
  \vy_h^{i+1} = \argmin_{\vy_h\in\V_h}L^i_h[\vy_h],
  \quad
  L^i_h[\vy_h] :=\frac{1}{2\tau}\|\vy_h-\vy^i_h\|^2_{H^1(\Omega;\mathbb{R}^3)}+E_h[\vy_h].
\end{align}
The first term in $L^i$ dictates the flow metric and penalizes the deviation of $\vy_h^{i+1}$ from $\vy_h^i$ provided the pseudo time-step $\tau>0$ is small. Therefore, \eqref{eq:gf-energy} is a mechanism to minimize $E_h[\vy_h]$ within an $H^1$-neighborhood of $\vy_h^i$. The choice of $H^1(\Omega;\mathbb{R}^3)$-metric is made for convenience, but has important consequences for the stability of \eqref{eq:gf-energy}.

Calculating the first order variation $\delta L^i_h[\vy_h](\vv_h)$ of $L^i_h[\vy_h]$ in the direction $\vv_h\in\V_h$, we obtain the weak form of the Euler-Lagrange equation for $L^i_h[\vy_h]$
\begin{equation}\label{eq:gf-onestep}
  \delta L^i_h[\vy_h](\vv_h)=\frac{1}{\tau}(\vy_h,\vv_h)_{H^1(\Omega;\mathbb{R}^3)}+\delta E_h[\vy_h](\vv_h)-F_i(\vv_h)=0
  \quad\forall \, \vv_h\in\V_h,
\end{equation}
where $F_i\in\V_h^{\ast}$ is the linear functional
\begin{align*}
F_i(\vv_h):=\frac{1}{\tau}(\vy_h^i,\vv_h)_{H^1(\Omega;\mathbb{R}^3)},
\end{align*}
and $\delta E_h[\vy_h](\vv_h)$ is the first variation of $E_h[\vy_h]$ in the direction $\vv_h\in\V_h$. The latter turns out to be {\it nonlinear} in $\vy_h$ due to the nonlinear structure of \eqref{eq:discrete-energy}. The explicit expression of $\delta E_h[\vy_h](\vv_h)$ is tedious to compute and is omitted in this paper, but is given in the the first version of our companion arXiv preprint \cite{bouck2022computation} along with the second variation needed for a Newton method. In fact, we propose a Newton-type algorithm to solve \eqref{eq:gf-onestep} at each step $i$ of the gradient flow. Choosing $\vy_h^{i,0}:=\vy_h^{i}\in\V_h$ and assuming $\vy_h^{i,n}\in\V_h$ is known for $n\ge0$, we compute the increment $\delta\vy_h^{i,n}\in\V_h$ from
\begin{equation}\label{eq:Newton-step}
  \delta^2 L^{i}_h[\vy^{i,n}_h](\delta\vy_h^{i,n},\vv_h)=-\delta L^{i}_h[\vy^{i,n}_h](\vv_h)
  \quad\forall \, \vv_h\in\V_h,
\end{equation}
and update $\vy^{i,n+1}_h=\vy^{i,n}_h+\delta\vy_h^{i,n}$. We point out that
\begin{equation}\label{eq:second-variation}
\delta^2 L^{i}_h[\vy_h](\vv_h,\vw_h) = \frac{1}{\tau} \big( \vv_h,\vw_h \big)_{H^1(\Omega;\mathbb{R}^3)} +
\delta^2 E_h[\vy_h](\vv_h,\vw_h)
\quad\forall \, \vy_h,\vv_h,\vw_h\in\V_h
\end{equation}
and $\delta^2 E_h[\vy_h](\vv_h,\vw_h)$ is accessible by straightforward but lengthy calculations \cite{bouck2022computation}. The Newton sub-iteration \eqref{eq:Newton-step} is linear in $\delta\vy_h^{i,n}\in\V_h$, and is 
stopped once
\begin{equation*}
\big|\delta L^{i}[\vy_h^{i,M}](\delta\vy_h^{i,M})\big|^{1/2}\le \tol_1,
\end{equation*} 
for some integer $M>0$ and a pre-determined tolerance parameter $\tol_1$.

We choose the next iterate of the gradient flow to be the output of the Newton sub-iteration, i.e, $\vy^{i+1}_h:=\vy^{i,M}_h$. We stop the nonlinear gradient flow provided
\begin{equation*}
\frac{1}{\tau} \big|E_h[\vy_h^{N}]-E_h[\vy_h^{N-1}] \big|~\le~\tol_2
\end{equation*} 
is valid for some $N>0$ and fixed tolerance $\tol_2$, and declare $\vy_h^{N}$ to be the output.

Energy stability of the gradient flow is guaranteed if the minimization problem \eqref{eq:gf-energy} is solved exactly. This is an intrinsic property of \textit{implicit} gradient flows.
\begin{proposition}[energy decrease property] Given $\vy^{i}_h\in\V_h$ for $i\ge0$, suppose $\vy^{i+1}_h\in\V_h$ solves the minimization problem \eqref{eq:gf-energy}. Then $E_h[\vy_h^{i+1}]\le E_h[\vy_h^{i}]$ with strict inequality if $\vy^{i+1}_h\ne\vy^{i}_h$. Moreover, for any $N\ge1$, there holds
\begin{equation}\label{eq:energy-stability-sum}
E_h[\vy^{N}_h]+\frac{1}{2\tau}\sum_{i=0}^{N-1}\|\vy^{i+1}_h-\vy^i_h\|^2_{H^1(\Omega;\mathbb{R}^3)}\le E_h[\vy^{0}_h].
\end{equation}
\end{proposition}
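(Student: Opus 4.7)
The plan is to exploit the defining minimization property of $\vy_h^{i+1}$ together with the nonnegativity of the $H^1$ penalty term. First, I would use that $\vy_h^{i+1}$ is a minimizer of $L_h^i$ over $\V_h$ by plugging in the competitor $\vy_h^i\in\V_h$, which yields
\begin{equation*}
L_h^i[\vy_h^{i+1}] \le L_h^i[\vy_h^i].
\end{equation*}
Second, I would observe that the penalty term in $L_h^i$ vanishes at $\vy_h^i$, so $L_h^i[\vy_h^i] = E_h[\vy_h^i]$, while at $\vy_h^{i+1}$ the full expression is $L_h^i[\vy_h^{i+1}] = \frac{1}{2\tau}\|\vy_h^{i+1}-\vy_h^i\|_{H^1(\Omega;\mathbb{R}^3)}^2 + E_h[\vy_h^{i+1}]$.

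Combining these two identities gives the one-step inequality
\begin{equation*}
E_h[\vy_h^{i+1}] + \frac{1}{2\tau}\|\vy_h^{i+1}-\vy_h^i\|_{H^1(\Omega;\mathbb{R}^3)}^2 \le E_h[\vy_h^i].
\end{equation*}
Since the norm term is nonnegative, this already gives $E_h[\vy_h^{i+1}]\le E_h[\vy_h^i]$, with strict inequality whenever $\vy_h^{i+1}\ne\vy_h^i$ because the $H^1$ norm is strictly positive on nonzero elements of $\V_h$.

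Finally, to obtain \eqref{eq:energy-stability-sum}, I would sum the one-step inequality from $i=0$ to $i=N-1$. The energy contributions telescope, leaving $E_h[\vy_h^N]$ on the left and $E_h[\vy_h^0]$ on the right, while the penalty terms accumulate to the sum in \eqref{eq:energy-stability-sum}. There is no substantive obstacle here: this is the textbook variational argument for implicit (JKO-type) gradient flows, and it does not require any structural property of $E_h$ beyond being well-defined on $\V_h$. In particular, neither convexity nor differentiability of $E_h$ is invoked, which is essential since $E_h$ is nonconvex.
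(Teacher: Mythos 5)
Your proof is correct and follows exactly the paper's argument: test the minimizer $\vy_h^{i+1}$ of $L_h^i$ against the admissible competitor $\vy_h^i$, use that the penalty vanishes at $\vy_h^i$, deduce the one-step inequality, and sum to obtain \eqref{eq:energy-stability-sum}. No gaps, and no meaningful deviation from the paper's proof.
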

\begin{proof}
Since $\vy_h=\vy_h^i$ is an admissible function in \eqref{eq:gf-energy}, we deduce
\begin{equation}\label{eq:energy-stability-onestep}
\frac{1}{2\tau}\|\vy^{i+1}_h-\vy^i_h\|^2_{H^1(\Omega;\mathbb{R}^3)}+E_h[\vy^{i+1}_h]=L^i_h[\vy^{i+1}_h]\le L^i_h[\vy^{i}_h]=E_h[\vy^{i}_h].
\end{equation}
This proves that the energy $E_h[\vy_h^i]$ is strictly decreasing provided $\vy^{i+1}_h\ne\vy^{i}_h$ and, upon summation from $i=0$ to $N-1$, also yields \eqref{eq:energy-stability-sum}. 
\end{proof}

We note that the energy decrease property is also valid for flow metrics other than $H^1(\Omega;\mathbb{R}^3)$. However, the choice of $H^1(\Omega;\mathbb{R}^3)$ allows for better control of the Newton sub-iterations solving \eqref{eq:gf-onestep}, which we now address. We base our comments below on our numerical experiments of Section \ref{sec:simulations} and the companion paper \cite{bouck2022computation}. 
\begin{itemize}  
\item \textbf{Initialization.}
When $\vy_h^{i}\in\V_h$ is given in the gradient flow outer iteration, it is natural to choose $\vy_h^{i,0}:=\vy_h^{i}$ as initial guess for the Newton's inner iterations that is designed to compute $\vy_h^{i+1}$. If $\vy_h^{i,\ast}$ is a local minimizer of \eqref{eq:gf-energy} and $E_h[\vy_h^0]\le\alpha$, then \eqref{eq:energy-stability-onestep} implies
\begin{equation*}
\frac{1}{2\tau}\|\vy_h^{i,\ast}-\vy_h^{i}\|^2_{H^1(\Omega;\mathbb{R}^3)}\le E_h[\vy^{i}_h]\le E_h[\vy^{0}_h]\le\alpha,
\end{equation*}
whence the $H^1$-distance between $\vy_h^{i,0}$ and the minimizer $\vy_h^{i,\ast}$ is proportional to $\tau^{1/2}$. This not only reveals the crucial role of $\tau$ but also of the $H^1$-metric for the discrete flow \eqref{eq:gf-energy}, which is the norm governing the stretching energy \eqref{eq:stretching-energy}.
\smallskip
\item \textbf{Well-posedness and convergence.}
In view of \eqref{eq:second-variation}, the quadratic structure of the flow metric term $\tau^{-1}(\cdot,\cdot)_{H^1(\Omega;\mathbb{R}^3)}$ may compensate for the lack of ellipticity of $\delta^2 E_h[\vy_h](\cdot,\cdot)$ due to the lack of convexity of $E_h$, provided $\tau$ is sufficiently small. Therefore, we expect well-posedness and superlinear convergence of the proposed Newton method, when $\tau$ is small.
\item \textbf{Moderate condition on $\tau$.} Our simulations of Section \ref{sec:simulations}, and those in \cite{bouck2022computation}, confirm solvability and convergence of the Newton sub-iterations \eqref{eq:Newton-step} with moderate values of $\tau$ relative to the meshsize $h$. Consequently, the restriction on $\tau$ is mild for current simulations and yet prevents the use of backtracking techniques.
\end{itemize}

\section{Convergence of discrete minimizers}\label{sec:numerical-analysis}

This section is dedicated to proving convergence of discrete minimizers under the following regularity assumption.

\begin{assumption}[regularity]\label{as:H2-immersibility}
The metric $g$ defined in \eqref{eq:target-metric} admits an $H^2$ isometric immersion:  there exists a $\vy\in H^2(\Omega;\mathbb{R}^3)$ such that $\nabla\vy^T\nabla\vy = g$ a.e.\ in $\Omega$ or equivalently $E[\vy]=0$.
\end{assumption}

Under regularity Assumption \ref{as:H2-immersibility}, the main result can be stated as follows.
\begin{theorem}[convergence of minimizers]\label{thm:compactness}
Let Assumption \ref{as:H2-immersibility} hold and 
let $\vy_h$ be a minimizer of $E_h$ with mean value $\overline{\vy}_h:=|\Omega|^{-1}\int_{\Omega}\vy_h \,d\vx$. Then there is a subsequence (not relabeled) of $\vy_h-\overline{\vy}_h$ that converges in $H^1(\Omega,\mathbb{R}^3)$ strongly to a function $\vy^*\in H^2(\Omega;\mathbb{R}^3)$ that satisfies $E[\vy^*]=0$, i.e. $\vy^*$ is an isometric immersion $\I[\vy^*]={\nabla\vy^{*}}^T\nabla\vy^{*} = g$.
\end{theorem}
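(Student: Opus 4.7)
The plan rests on three ingredients: an energy-scaling bound $E_h[\vz_h]\lesssim h^2$ for a carefully constructed recovery sequence $\vz_h\in\V_h$ approximating the $H^2$ isometric immersion $\vy$ guaranteed by Assumption \ref{as:H2-immersibility}; a compactness argument combining the coercivity of $E$ (Proposition \ref{prop:coercivity}) with the discrete $H^2$-control supplied by the regularization $R_h$; and Fatou's lemma to identify the limit as an isometric immersion of the target metric $g$.

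First, I would produce $\vz_h\in\V_h$ with $E_h[\vz_h]\lesssim h^2$ and use the minimality of $\vy_h$ to deduce $E_h[\vy_h]\le E_h[\vz_h]\lesssim h^2$, so that both $E[\vy_h]\lesssim h^2$ and $R_h[\vy_h]\lesssim h^2$. By Proposition \ref{prop:coercivity} this yields uniform bounds
\[
  \|\nabla\vy_h\|_{L^2(\Omega)}\lesssim 1,\qquad \|J[\vy_h]^{-1/2}\|_{L^2(\Omega)}\lesssim 1,
\]
while $R_h[\vy_h]\lesssim h^2$ gives $|\vy_h|_{H^2_h(\Omega;\mathbb{R}^3)}\lesssim 1$. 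Poincaré's inequality applied to $\vy_h-\overline{\vy}_h$ then produces a uniform $H^1$ bound, from which one extracts a subsequence with $\vy_{h_k}-\overline{\vy}_{h_k}\rightharpoonup \vy^*$ in $H^1(\Omega;\mathbb{R}^3)$.

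Second, I would upgrade this to strong $H^1$-convergence via a broken Rellich-type argument that exploits the uniform discrete-$H^2$ bound; moreover, since the aggregate jumps satisfy
\[
  \sum_{e\in\mathcal{E}_h}\|\jump{\nabla\vy_h}\|_{L^2(e)}^2 \;\le\; h\,|\vy_h|_{H^2_h(\Omega;\mathbb{R}^3)}^2 \;\lesssim\; h \;\to\; 0,
\]
a standard lifting/averaging argument forces the limit $\vy^*$ to belong to $H^2(\Omega;\mathbb{R}^3)$. Passing to a further subsequence, $\nabla\vy_{h_k}\to \nabla\vy^*$ and $J[\vy_{h_k}]\to J[\vy^*]$ pointwise a.e. in $\Omega$. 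Applying Fatou's lemma to the coercivity bound yields $\|J[\vy^*]^{-1/2}\|_{L^2}<\infty$, so $J[\vy^*]>0$ a.e.; hence $\vF\mapsto W(\vx,\vF)$ is continuous at $\nabla\vy^*(\vx)$ for a.e. $\vx$, and $W(\vx,\nabla\vy_{h_k})\to W(\vx,\nabla\vy^*)$ a.e. Since $W\ge 0$ by Corollary \ref{cor:stretching-nondegeneracy}, a final application of Fatou gives
\[
  E[\vy^*]\;\le\;\liminf_{k\to\infty}\int_\Omega W(\vx,\nabla\vy_{h_k})\,d\vx\;\le\;\liminf_{k\to\infty}E_h[\vy_{h_k}]\;=\;0,
\]
and Corollary \ref{cor:minimizers} turns $E[\vy^*]=0$ into the metric identity $\I[\vy^*]=g$.

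The technical heart of the argument, and the main obstacle, is the construction of the recovery sequence $\vz_h$. Because $H^2(\Omega)$ does not embed into $W^{1,\infty}(\Omega)$ in two dimensions and the map $\vy\mapsto J[\vy]^{-1}$ is non-concave, a plain Lagrange interpolation or a convolution-based smoothing of $\vy$ cannot by itself guarantee the pointwise lower bound $J[\vz_h]\ge c>0$ needed to tame the singular integrand $W\sim J^{-1}$ in the scaling estimate. As anticipated in the introduction, I would first apply a Lusin truncation of $\vy$ in the spirit of Friesecke--James--Müller to obtain a Lipschitz surrogate $\widetilde{\vy}$ close to $\vy$ in $H^1(\Omega;\mathbb{R}^3)$, and then interpolate $\widetilde{\vy}$: the uniform Lipschitz bound transfers to $\vz_h$ and delivers the missing lower bound on $J[\vz_h]$, while the $H^1$-proximity of $\widetilde{\vy}$ to $\vy$ yields $E[\vz_h]\lesssim h^2$ (the regularization contribution being of the same order by inverse estimates on $\vz_h$). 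A secondary subtlety is the broken-$H^2$ compactness used to conclude $\vy^*\in H^2(\Omega;\mathbb{R}^3)$, which demands care but is by now standard in DG analysis.
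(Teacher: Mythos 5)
Your roadmap coincides with the paper's: build a recovery sequence via a Lusin truncation of the $H^2$ isometric immersion followed by Lagrange interpolation (Lemma~\ref{lem:approx-by-W2inf} and Lemma~\ref{lem:lagrange-approx}, feeding Proposition~\ref{prop:energy-scaling}), use minimality to propagate $E_h[\vy_h]\lesssim h^2$, extract strong $H^1$-compactness and $H^2$-regularity of the limit from the uniform $H^2_h$-bound (Lemma~\ref{lem:compactness-properties}), and conclude with Fatou and Proposition~\ref{prop:target-metric}.

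One genuine difference is how you rule out the singularity $J\to 0$: you apply Fatou to $J[\vy_h]^{-1}$ using the coercivity bound $\|J[\vy_h]^{-1/2}\|_{L^2}\lesssim 1$ and deduce $J[\vy^*]^{-1}\in L^1$, hence $J[\vy^*]>0$ a.e., which then gives $|\nabla\vy^*\vm|>0$ a.e. by positive-definiteness of $\I[\vy^*]$. The paper instead proves a \emph{uniform} pointwise lower bound $J[\vy_h]\ge 1/\eta$ using the fact that $\nabla\vy_h$ is piecewise constant and a shape-regularity counting argument on the ``bad'' set $B_{h,\eta}$. Both reach the same conclusion; your argument is arguably cleaner and avoids the mesh-counting step, though it yields only the a.e. lower bound (which is all that is needed).

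There is, however, one concrete slip in your sketch of the recovery-sequence bound. You write that ``the regularization contribution [is] of the same order by inverse estimates on $\vz_h$.'' An inverse estimate on $\vz_h\in\V_h$ gives at best
\[
|\vz_h|^2_{H^2_h(\Omega;\mathbb{R}^3)} \lesssim h^{-2}\|\nabla\vz_h\|^2_{L^2(\Omega;\mathbb{R}^{3\times 2})},
\]
so $R_h[\vz_h]=c_r h^2|\vz_h|^2_{H^2_h}\lesssim 1$, which is \emph{not} $\mathcal{O}(h^2)$ and would destroy the quadratic energy scaling. The correct mechanism, used in the paper, is the $H^2$-stability of the Lagrange interpolant (Lemma~\ref{lem:lagrange-interp}) applied to the truncation $\vy^{\mu_h}$, combined with the uniform $H^2$-bound $\|\vy^{\mu_h}\|_{H^2}\lesssim\|\vy\|_{H^2}$ from Lemma~\ref{lem:approx-by-W2inf}: this gives $|\vz_h|_{H^2_h}\lesssim\|\vy\|_{H^2}$ uniformly in $h$, whence $R_h[\vz_h]\lesssim h^2$. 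You therefore need to record the $H^2$-bound on the Lusin truncation as one of its properties; the Lipschitz control alone is not enough for the regularization term.
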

We start with a roadmap to the proof of convergence of discrete minimizers, which is inspired by the seminal work \cite{friesecke2002theorem}. The first step is to build a recovery sequence $\vy_h$ for the isometric immersion $\vy\in  H^2(\Omega;\mathbb{R}^3)\cap W^{1,\infty}(\Omega;\mathbb{R}^3)$ in Assumption \ref{as:H2-immersibility}, that exhibits the desired energy scaling $E_h[\vy_h]\lesssim h^2$ (see Proposition \ref{prop:energy-scaling}.) For such a $\vy$ we know that $J[\vy] = \lambda\geq c_{s,s_0}> 0$ due to \eqref{eq:lambda-bounds}. The challenge is to show a similar lower bound for $J[\vy_h]=\det\nabla\vy_h^T\nabla\vy_h$. This is trickier than interpolating directly or after convolution because this procedure would not lend to a lower bound for $J[\vy_h]$ for $\vy$ merely in $W^{1,\infty}(\Omega;\mathbb{R}^3)$. Moreover, the regularity $\vy\in H^2(\Omega;\mathbb{R}^3)$ is borderline for $\Omega\subset\mathbb{R}^2$ and does not yield further pointwise regularity of $\nabla\vy$. Therefore, we instead resort to a Lusin approximation argument for Sobolev functions similar to that used in \cite{friesecke2002theorem}. To achieve the desired energy scaling of $E_h[\vy_h]\lesssim h^2$, we exploit both frame indifference and the neo-Hookean structure of the stretching energy in \eqref{eq:stretching-energy-density}.
\begin{equation*}
\int_\Omega W(\vx ,\nabla\vy_h)d\vx  = \int_\Omega \left(\big|\vL_{\vn_h}^{-1/2}[\nabla \vy_h,\; \vb_h]\vL_{\vm}^{1/2}\big|^2 - 3\right)\text{ }d\vx ,
\end{equation*}
where $\vn_h = \frac{\nabla \vy_h\vm}{|\nabla \vy_h\vm|}$ and $\vb_h = \frac{\partial_1\vy_h\times \partial_2\vy_h}{|\partial_1\vy_h\times \partial_2\vy_h|^2}$. We next recall that Remark \ref{rmk:str-SO3} (special rotations) implies that $\vR = \vL_{\vn}^{-1/2}[\nabla \vy,\; \vb]\vL_{\vm}^{1/2}\in SO(3)$, because $E[\vy]=0$ according to Proposition \ref{prop:target-metric} (target metric). This enables us to use the rotation $\vR$ to rewrite the integrand as
\[
\big|\vL_{\vn_h}^{-1/2}[\nabla \vy_h,\; \vb_h]\vL_{\vm}^{1/2} \big| = \big|\vR+\vA_h|,
\quad
\vA_h := \vL_{\vn_h}^{-1/2}[\nabla \vy_h,\; \vb_h]\vL_{\vm}^{1/2} - \vR,
\]
and invoke frame indifference. Multiplying by $\vR^T$ does not change the energy, i.e.
\begin{equation*}
\int_\Omega W(\vx ,\nabla\vy_h)d\vx  
= \int_\Omega \left(\big|\vR^T\vR+\vR^T\vA_h\big|^2 - 3\right)\text{ }d\vx  = \int_\Omega \left(\big|\Id_3+\vR^T\vA_h\big|^2 - 3\right)\text{ }d\vx .
\end{equation*}
Lemma \ref{lem:expansion} below and an $L^\infty$ bound on $\vR$ leads to a quadratic expansion around $\Id_3$:
\begin{equation*}
\int_\Omega W(\vx ,\nabla\vy_h)d\vx  = \int_\Omega\left( \big|\Id_3+\vR^T\vA_h\big|^2 - 3\right)\, d\vx \lesssim \Vert \vA_h\Vert^2_{L^2(\Omega; \mathbb{R}^{3\times3})}.
\end{equation*}
Finally, an error estimate on $\vy-\vy_h$ and properties of $\vy_h$ in Lemma \ref{lem:lagrange-approx} further imply that $\Vert \vA_h\Vert^2_{L^2(\Omega;\mathbb{R}^{3\times3})}\lesssim h^2$ and $|\vy_h|_{H^2_h(\Omega;\mathbb{R}^3)}^2\lesssim 1$, whence $E_h[\vy_h]\lesssim h^2$ according to \eqref{eq:discrete-energy}.

  Existence of a recovery sequence $\vy_h$ so that $E_h[\vy_h]\lesssim h^2$ implies that global discrete minimizers $\vy^*_h$ are uniformly bounded in the $H^2_h$-seminorm. The uniform bound means that a subsequence of $\vy_h^*$ converges strongly in $H^1(\Omega)$, which bypasses the convexity issues of $W$. The tools transfer a discrete $H^2$-bound to additional compactness have been developed for bending problems \cite{bonito2021dg}. We go over the relevant results in Lemmas \ref{lem:interp-pw-const} and \ref{lem:compactness-properties}.

We now connect our work with the existing literature. As in \cite{friesecke2002theorem}, energy scaling brings additional compactness, but the mechanism in this paper is $H^2$-regularity of isometric immersions rather than the geometric rigidity result in \cite{friesecke2002theorem}. We refer to \cite{plucinsky2018actuation} for a geometric rigidity result in the context of LCEs. Moreover, we learned from \cite{plucinsky2018actuation} that $\vR$ is a suitable rotation to exploit frame indifference and perform a quadratic expansion of $\big|\Id_3+\vR^T\vA_h\big|^2$ around the identity.

\subsection{Preliminaries}

This section covers preliminaries to lay the groundwork for the main results later. Subsection \ref{S:prelim-energy-scaling} contains preliminaries on how to approximate an $H^2$-isometric immersion of $g$. The key question is as follows:
\begin{equation}\label{key_question}
\begin{minipage}{.85\textwidth}
\emph{Given $\vy\in H^2(\Omega;\mathbb{R}^3)$ that satisfies $\nabla\vy^T\nabla\vy = g$, how does one construct $\vy_h\in \mathbb{V}_h$ such that $\Vert \vy-\vy_h\Vert_{H^1(\Omega; \mathbb{R}^3)}\lesssim h$ and $J[\vy_h]>0$ a.e. in $\Omega$?}
\end{minipage}
\end{equation}
  This kind of approximation requires some control in $W^{1,\infty}(\Omega)$. To achieve control over $J[\vy_h]$ in $L^\infty$, we regularize $\vy$ with a $\vy^\mu\in W^{2,\infty}(\Omega;\mathbb{R}^3)$ such that $J[\vy^\mu]\geq c$. We note that there are works on approximating maps by smooth maps with well-defined normals. We refer to \cite[Proposition 4.1]{conti2006derivation}, where the approximation is in the $L^\infty$-norm rather than $H^1$. In our context, however, we deal with functions that have higher regularity than \cite{conti2006derivation}. Hence, we are able to take advantage of Lusin truncation of Sobolev functions and ideas used in the construction of a recovery sequence in \cite{friesecke2002theorem}.

Subsection \ref{S:prelim-compactness} discusses the regularization of a piecewise constant matrix field by an $H^1$-matrix field. This regularization provides additional compactness and relies on a quasi-interpolant that has been used in previous works on DG methods for bending problems \cite{bonito2021dg}. Our presentation is brief but self-contained.

\subsubsection{Preliminaries for energy scaling}\label{S:prelim-energy-scaling}
We first establish a quadratic expansion of the neo-Hookean formula around the identity, thereby slightly improving on \cite[Proposition A.2]{plucinsky2018actuation}.
\begin{lemma}[scaling of neo-Hookean formula near identity]\label{lem:expansion}
If $\vA\in\mathbb{R}^{3\times3}$ satisfies $\det(\Id_3+\vA) = 1$, then
\begin{equation*}
\big|\Id_3 +\vA\big|^2-3\leq 3\big|\vA\big|^2.
\end{equation*}
\end{lemma}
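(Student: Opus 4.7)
The plan is to deduce this as an immediate corollary of Proposition \ref{prop:nondegen-neo-Hookean} (bounds for $W^H_{3D}(\vF)$), which has already been established in the excerpt. Setting $\vF := \Id_3 + \vA$, the hypothesis $\det(\Id_3 + \vA) = 1$ is precisely $\det \vF = 1$, so the upper bound in \eqref{eq:nondegen-neo-Hookean} applies and gives
\[
|\Id_3 + \vA|^2 - 3 \;\le\; 3\,\mathrm{dist}\big(\Id_3 + \vA, SO(3)\big)^2.
\]

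To close the argument, I would observe that $\Id_3 \in SO(3)$, so the distance to $SO(3)$ is bounded by the distance to this particular element:
\[
\mathrm{dist}\big(\Id_3 + \vA, SO(3)\big)^2 \;=\; \inf_{\vR \in SO(3)} |\Id_3 + \vA - \vR|^2 \;\le\; |\Id_3 + \vA - \Id_3|^2 \;=\; |\vA|^2.
\]
Combining these two inequalities yields $|\Id_3 + \vA|^2 - 3 \le 3|\vA|^2$, which is the desired estimate.

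There is really no obstacle here, since all the work is in the already-proved upper bound of Proposition \ref{prop:nondegen-neo-Hookean}; the present lemma just specializes to the case where the competitor rotation is $\Id_3$ itself. The only thing worth double-checking is that one does not need to verify $\Id_3$ is the nearest rotation (it need not be); using it as a mere upper bound for the infimum is what makes the inequality sharp enough without any calculus argument.
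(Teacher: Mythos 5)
Your proof is correct and matches the paper's proof essentially verbatim: both apply the upper bound from Proposition \ref{prop:nondegen-neo-Hookean} to $\vF = \Id_3 + \vA$ and then bound the distance to $SO(3)$ by plugging in $\Id_3$ as a (not necessarily optimal) competitor. Nothing further to add.
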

\begin{proof}
Since $\det(\Id_3 +\vA)=1$, we may apply Proposition \ref{prop:nondegen-neo-Hookean} to bound
\begin{equation*}
|\Id_3 +\vA|^2-3\leq 3 \,\mathrm{dist} \, \big(\Id_3+\vA,SO(3)\big)^2 \leq 3\big|\Id_3 +\vA - \Id_3\big|^2 = 3\big|\vA\big|^2,
\end{equation*}
which is the desired result.
\end{proof}

We next introduce, without proof, a truncation argument for Sobolev functions from \cite[Proposition A.2]{friesecke2002theorem}; this is a suitable form of Lusin theorem. The result in \cite[Proposition A.2]{friesecke2002theorem} is stated with boundary conditions but it is still valid without them. We also point to a similar result in \cite[Theorem 3.11.6]{ziemer1989weakly} as well as a Lipschitz truncation of $W^{1,1}_0(\Omega)$ functions \cite[Theorem 13]{diening2013finite}.

\begin{lemma}[truncation of $H^2$-functions]\label{lem:truncation}
Let $\vy\in H^2(\Omega;\mathbb{R}^3)$. There exists $\vy^\mu\in W^{2,\infty}(\Omega;\mathbb{R}^3)$ such that 
\begin{equation}
\Vert \vy^\mu\Vert_{W^{2,\infty}(\Omega;\,\mathbb{R}^3)}\leq C\mu	\label{eq:W2-inf-bound},
\end{equation}
and for $S_\mu := \{\vx\in \Omega : \vy(\vx )\neq \vy^\mu(\vx )\text{ or }\nabla\vy(\vx )\neq \nabla\vy^\mu(\vx )\}$ we have the estimate
\begin{equation}\label{eq:smu-measure-bound}
|S_\mu|\leq C\frac{\omega(\mu)}{\mu^2} 	
\end{equation}
on the measure, $|S_\mu|$, of $S_\mu$, where
\begin{equation*}
 \omega(\mu) = \int_{\{|\vy|+|\nabla \vy|+|D^2\vy|\geq \frac{\mu}{2}\}} \left(|\vy|+|\nabla \vy|+|D^2\vy|\right)^2d\vx 
\end{equation*}
satisfies $\omega(\mu)\to0$ as $\mu\to\infty$.
\end{lemma}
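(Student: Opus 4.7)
The plan is to realize $\vy^\mu$ as a Lusin-type truncation of $\vy$ built from the Hardy--Littlewood maximal function together with a Whitney-type extension, which is the standard route to approximate a Sobolev function by one with higher regularity. Writing $Mf$ for the Hardy--Littlewood maximal function of $f$ (with $f$ first extended to all of $\mathbb{R}^2$ by a Sobolev extension and then by zero outside a fixed neighborhood of $\Omega$), I would introduce the good set
\begin{equation*}
A_\mu := \bigl\{x\in\Omega : M(|\vy|)(x) + M(|\nabla\vy|)(x) + M(|D^2\vy|)(x) \leq \mu/c_0\bigr\}
\end{equation*}
for a sufficiently large constant $c_0 > 2$, and take $\vy^\mu$ to coincide with $\vy$ on $A_\mu$ and to be a Whitney extension off it.

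The central pointwise tool is the classical inequality, valid at Lebesgue points $x,y$ of any $f\in W^{1,2}(\Omega;\mathbb{R}^k)$,
\begin{equation*}
|f(x)-f(y)| \leq C|x-y|\bigl(M(|\nabla f|)(x) + M(|\nabla f|)(y)\bigr),
\end{equation*}
which I would apply to $f=\vy$ and to each component of $f=\nabla\vy$. On $A_\mu$ this shows at once that both $\vy$ and $\nabla\vy$ are Lipschitz with constant $C\mu$. A $C^{1,1}$ Whitney extension of the pair $(\vy,\nabla\vy)|_{A_\mu}$ then produces $\vy^\mu\in W^{2,\infty}(\Omega;\mathbb{R}^3)$ with $\|\vy^\mu\|_{W^{2,\infty}(\Omega;\mathbb{R}^3)}\leq C\mu$, which is \eqref{eq:W2-inf-bound}, while still coinciding with $\vy$ and having $\nabla\vy$ as gradient at Lebesgue points of $A_\mu$; consequently $S_\mu\subseteq \Omega\setminus A_\mu$ up to a null set.

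For the sharp measure bound \eqref{eq:smu-measure-bound}, I would split $F := |\vy|+|\nabla\vy|+|D^2\vy|$ as $F = F_1 + F_2$ with $F_2 := F\,\chi_{\{F\geq\mu/2\}}$ and $F_1 := F - F_2$. Since $MF_1 \leq \mu/2$ pointwise and $c_0 > 2$, the bad set $\Omega\setminus A_\mu$ is contained in $\{MF_2 \geq c\mu\}$ for some $c>0$; the $L^2$-boundedness of $M$ together with Chebyshev's inequality then yields $|S_\mu| \leq C\|F_2\|_{L^2(\Omega)}^2/\mu^2 = C\omega(\mu)/\mu^2$, and $\omega(\mu)\to 0$ as $\mu\to\infty$ follows from dominated convergence using $F\in L^2(\Omega)$ (a consequence of $\vy\in H^2(\Omega;\mathbb{R}^3)$ on a bounded Lipschitz domain). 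The main technical delicacy I anticipate is verifying that the Whitney extension truly matches $\vy$ and $\nabla\vy$ at Lebesgue points of $A_\mu$ while keeping the $W^{2,\infty}$ bound proportional to $\mu$; the remaining steps are routine manipulations with maximal function estimates and the standard $C^{1,1}$ Whitney extension.
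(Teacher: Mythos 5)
The paper does not actually prove this lemma: it is quoted \emph{without proof}, with an explicit citation to \cite[Prop.~A.2]{friesecke2002theorem} (and pointers to \cite[Thm.~3.11.6]{ziemer1989weakly} and \cite[Thm.~13]{diening2013finite}). Your sketch is the right kind of argument — it recovers the Liu/Friesecke--James--M\"uller construction (maximal-function good set, Whitney extension off it, $L^2$-boundedness of $M$ plus Chebyshev for the measure bound), and the final steps ($\|MF_2\|_{L^2}^2\lesssim\|F_2\|_{L^2}^2=\omega(\mu)$ together with Chebyshev, and dominated convergence for $\omega(\mu)\to 0$) are correct.

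There is, however, a genuine gap at the Whitney step. Applying the first-order pointwise inequality to $\vy$ and to each component of $\nabla\vy$ gives two Lipschitz estimates on $A_\mu$, namely $|\vy(x)-\vy(y)|\lesssim\mu|x-y|$ and $|\nabla\vy(x)-\nabla\vy(y)|\lesssim\mu|x-y|$. These are \emph{not} sufficient hypotheses for the $C^{1,1}$ Whitney extension theorem. Whitney requires that the jet $(\vy,\nabla\vy)$ on $A_\mu$ satisfy the second-order compatibility condition
\begin{equation*}
\big|\vy(x)-\vy(y)-\nabla\vy(y)\cdot(x-y)\big|\leq C\mu\,|x-y|^2 \qquad (x,y\in A_\mu),
\end{equation*}
and since $A_\mu$ is typically highly non-convex one cannot deduce it from the gradient Lipschitz bound by integrating along the segment $[y,x]$. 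What is actually needed is the companion second-order pointwise estimate
\begin{equation*}
\big|\vy(x)-\vy(y)-\nabla\vy(y)\cdot(x-y)\big|\leq C|x-y|^2\big(M(|D^2\vy|)(x)+M(|D^2\vy|)(y)\big),
\end{equation*}
which is proved independently (e.g.\ via Riesz potentials or the usual telescoping of dyadic averages). This is the crux of the FJM argument, and it is precisely the part you labeled a ``technical delicacy''; it needs to be invoked explicitly, not folded into ``routine manipulations''. As a smaller point, your constant is pointed the wrong way: with the good-set threshold $\mu/c_0$ and the cutoff $F_2:=F\chi_{\{F\geq\mu/2\}}$, the chain of estimates gives $MF_2>\frac{\mu}{3}\bigl(\frac{1}{c_0}-\frac{3}{2}\bigr)$ on $\Omega\setminus A_\mu$, which is a positive lower bound only for $c_0<2/3$; taking $c_0>2$, as you wrote, makes the inclusion $\Omega\setminus A_\mu\subseteq\{MF_2\geq c\mu\}$ fail. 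The fix is to define $A_\mu$ with threshold $c_0\mu$ for a suitably large $c_0$ (this only changes the $W^{2,\infty}$ bound by a harmless constant).
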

Motivated by the proof of \cite[Theorem 6.1(ii)]{friesecke2002theorem}, we refine Lemma \ref{lem:truncation} for our purposes. In our case, the isometric immersion $\vy$ given by Assumption \ref{as:H2-immersibility} satisfies $\vy\in H^2(\Omega;\mathbb{R}^3)\cap W^{1,\infty}(\Omega,\mathbb{R}^3)$ and $J[\vy]\geq c_{s,s_0}>0$ by virtue of $\I[\vy] = g$.

\begin{lemma}[truncation of $H^2$-functions with Lipschitz control]\label{lem:approx-by-W2inf}
If $\vy\in H^2(\Omega;\mathbb{R}^3)$ $\cap W^{1,\infty}(\Omega;\mathbb{R}^3)$ and $J[\vy]\geq c>0$, then the function $\vy^\mu\in W^{2,\infty}(\Omega;\mathbb{R}^3)$ given by Lemma \ref{lem:truncation} satisfies the following bounds for $\mu$ sufficiently large:
\begin{align}
\Vert \vy^\mu\Vert_{W^{2,\infty}(\Omega;\mathbb{R}^3)}&\leq C\mu,	\label{eq:W2-inf-bound2}\\
\Vert \vy^\mu\Vert_{H^2(\Omega;\mathbb{R}^3)}& \leq C \Vert\vy\Vert_{H^2(\Omega;\mathbb{R}^3)},\label{eq:H2-bound-mu}\\
\Vert \vy^\mu\Vert_{W^{1,\infty}(\Omega;\mathbb{R}^3)}&\leq C\big(1+\Vert \vy\Vert_{W^{1,\infty}(\Omega;\mathbb{R}^3)}\big), \label{eq:W1-inf-bound}\\
J[\vy^\mu] &\geq \frac{c}{2}, \label{eq:det-est}\\
\Vert \vy^\mu-\vy \Vert_{H^1(\Omega;\mathbb{R}^3)}&\leq C\big(1+\|\vy\|_{W^{1,\infty}(\Omega;\mathbb{R}^3)}\big)\frac{\sqrt{\omega(\mu)}}{\mu}, \label{eq:H1-est}
\end{align}
where $C$ are generic constants independent of the truncation parameter $\mu$.
\end{lemma}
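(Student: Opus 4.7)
The plan is to take $\vy^\mu \in W^{2,\infty}(\Omega;\mathbb{R}^3)$ directly from Lemma~\ref{lem:truncation}, so that \eqref{eq:W2-inf-bound2} is immediate from \eqref{eq:W2-inf-bound}. The strategy for the remaining four estimates is a common \emph{good/bad} splitting: on $\Omega \setminus S_\mu$ the functions $\vy^\mu$ and $\vy$ coincide together with their gradients, while on $S_\mu$ the measure estimate \eqref{eq:smu-measure-bound} supplies a factor $\omega(\mu)/\mu^2$ that absorbs the powers of $\mu$ produced by \eqref{eq:W2-inf-bound2}. The main challenge will be that $J[\vy^\mu]$ is a \emph{nonlinear} function of $\nabla \vy^\mu$, so preserving the positivity bound \eqref{eq:det-est} requires finer information on the structure of $\vy^\mu$ inside $S_\mu$ than just its measure.

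For \eqref{eq:H2-bound-mu}, I would split $\|\vy^\mu\|_{H^2}^2 = \|\vy\|_{H^2(\Omega \setminus S_\mu)}^2 + \|\vy^\mu\|_{H^2(S_\mu)}^2$. The first piece is dominated by $\|\vy\|_{H^2(\Omega)}^2$ directly, and the second by $\|\vy^\mu\|_{W^{2,\infty}}^2\, |S_\mu| \lesssim \mu^2 \cdot \omega(\mu)/\mu^2 = \omega(\mu)$, which is in turn controlled by $\|\vy\|_{H^2}^2$ thanks to the definition of $\omega$. The estimate \eqref{eq:H1-est} is analogous: since only the $S_\mu$ piece contributes,
\begin{equation*}
\|\vy - \vy^\mu\|_{H^1(\Omega)}^2 \leq 2 \bigl(\|\vy\|_{W^{1,\infty}}^2 + \|\vy^\mu\|_{W^{1,\infty}}^2 \bigr)\, |S_\mu|,
\end{equation*}
which gives \eqref{eq:H1-est} once \eqref{eq:W1-inf-bound} is available.

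The harder bounds are \eqref{eq:W1-inf-bound} and \eqref{eq:det-est}, which require entering into the Whitney-type construction underlying Lemma~\ref{lem:truncation}. On $\Omega \setminus S_\mu$ both are trivial: $|\nabla\vy^\mu| \leq \|\vy\|_{W^{1,\infty}}$ and $J[\vy^\mu] = J[\vy] \geq c$ at Lebesgue points. For $\vx \in S_\mu$, the plan is to pick a nearby $\vx_0$ in the closure of $\Omega \setminus S_\mu$ and propagate information via the fundamental theorem of calculus together with the $W^{2,\infty}$ bound:
\begin{equation*}
|\nabla\vy^\mu(\vx) - \nabla\vy^\mu(\vx_0)| \leq C\mu\, |\vx - \vx_0|.
\end{equation*}
A Whitney decomposition of $S_\mu$ controls $|\vx - \vx_0|$ by the local length scale of the relevant component of $S_\mu$, which vanishes uniformly as $|S_\mu| \to 0$; this yields $|\nabla\vy^\mu(\vx)| \leq \|\vy\|_{W^{1,\infty}} + C$ on $S_\mu$ and hence \eqref{eq:W1-inf-bound}. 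Since $J$ is polynomial in $\nabla\vy^\mu$, which is now uniformly $L^\infty$-bounded by \eqref{eq:W1-inf-bound}, continuity of $J$ then gives $|J[\vy^\mu](\vx) - J[\vy](\vx_0)| \lesssim (1+\|\vy\|_{W^{1,\infty}})\sqrt{\omega(\mu)} \to 0$ uniformly in $\vx \in S_\mu$, so taking $\mu$ sufficiently large delivers \eqref{eq:det-est}. I expect this last step to be the main obstacle: unlike \eqref{eq:H2-bound-mu} and \eqref{eq:H1-est}, it uses not only the measure of $S_\mu$ but also fine diameter estimates on its connected components from the Whitney construction, and it is precisely what allows the $\mu^{-1}$ scale from the Whitney cubes to cancel the $\mu$ growth of $\|D^2 \vy^\mu\|_{L^\infty}$.
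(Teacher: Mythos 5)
Your overall strategy matches the paper's: treat \eqref{eq:W2-inf-bound2} as immediate from \eqref{eq:W2-inf-bound}, obtain \eqref{eq:H2-bound-mu} and \eqref{eq:H1-est} by the good/bad splitting $\Omega=(\Omega\setminus S_\mu)\cup S_\mu$ with the $W^{2,\infty}$-bound on the bad set and the measure estimate $|S_\mu|\lesssim\omega(\mu)/\mu^2$, and then derive \eqref{eq:W1-inf-bound} and \eqref{eq:det-est} by propagating the gradient value from a nearby ``good'' point into $S_\mu$ via the $W^{2,\infty}$ bound and then using Lipschitz continuity of $\vF\mapsto J(\vF)$ on bounded sets. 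Those four pieces are handled essentially as in the paper.

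The gap is in how you locate the nearby good point. You invoke a Whitney decomposition of $S_\mu$ and assert that ``the local length scale of the relevant component of $S_\mu$ vanishes uniformly as $|S_\mu|\to0$.'' That is not true in general: a set of the form $\{0<x_2<\varepsilon\}\cap\Omega$ has measure $O(\varepsilon)$ yet connected components of diameter $O(1)$, so measure control alone does not shrink component diameters, and the Whitney construction controls cube sizes by distance to $\partial S_\mu$, not by component diameters. Fortunately the diameter claim is also unnecessary. What you actually need is only that every $\vx\in S_\mu$ has a point of $\Omega\setminus S_\mu$ within distance $R:=\delta\sqrt{C\omega(\mu)}\,\mu^{-1}$, and this follows from a direct measure/pigeonhole argument that makes essential use of $\Omega$ being Lipschitz: if $B(\vx,R)\cap\Omega\subset S_\mu$, then $AR^2\leq|B(\vx,R)\cap\Omega|\leq|S_\mu|\leq C\omega(\mu)/\mu^2$ for a constant $A>0$ depending only on $\Omega$, which is a contradiction once $\delta=\sqrt{2/A}$. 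With $\vz\in\Omega\setminus S_\mu$, $|\vx-\vz|\leq R$, the $W^{2,\infty}$ bound gives $|\nabla\vy^\mu(\vx)-\nabla\vy^\mu(\vz)|\leq C\mu R=C\delta\sqrt{\omega(\mu)}$, and since $\nabla\vy^\mu(\vz)=\nabla\vy(\vz)$ (a.e.\ such $\vz$), one obtains \eqref{eq:W1-inf-bound}, and then \eqref{eq:det-est} from $J[\vy^\mu(\vz)]\geq c$ and the local Lipschitz estimate on $J$. Replacing your Whitney-based diameter claim with this ball-measure argument closes the gap and reproduces the paper's proof.
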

\begin{proof}
We first invoke Lemma \ref{lem:truncation} (truncation of $H^2$ functions). For all $\mu>0$, there exists a $\vy^{\mu}\in W^{2,\infty}(\Omega;\mathbb{R}^3)$ such that $\vy^\mu = \vy$ and $\nabla \vy^\mu = \nabla\vy$ on a set $\Omega\setminus S_\mu$, where $|S_\mu|\leq C\omega(\mu)/\mu^2$ and $\lim_{\mu\to\infty} \omega(\mu) = 0$. Additionally, $\Vert \vy^\mu\Vert_{W^{2,\infty}(\Omega;\mathbb{R}^3)}\leq C\mu$, which is \eqref{eq:W2-inf-bound2}. 

We shall now prove that $\vy^\mu$ satisfies the asserted properties starting with \eqref{eq:H2-bound-mu}. Using properties of $\vy^\mu$ on the good and bad sets yields
\begin{align*}
\Vert \vy^\mu\Vert_{H^2(\Omega;\mathbb{R}^3)}^2 &= \int_{S_\mu}|\vy^\mu|^2+|\nabla \vy^\mu|^2+|D^2\vy^\mu|^2\, d\vx+ \int_{\Omega\setminus S_\mu}|\vy^\mu|^2+|\nabla \vy^\mu|^2+|D^2\vy^\mu|^2\, d\vx\\
&\leq C|S_\mu| \mu^2+\int_{\Omega\setminus S_\mu} |\vy|^2+|\nabla \vy|^2+|D^2\vy|^2\, d\vx
\leq C|S_\mu| \mu^2 +\Vert \vy\Vert_{H^2(\Omega;\mathbb{R}^3)}^2.
\end{align*}
Since $C|S_\mu|\mu^2 \leq C\omega(\mu)\leq C\Vert \vy\Vert_{H^2(\Omega;\mathbb{R}^3)}^2$, \eqref{eq:H2-bound-mu} follows immediately.

In order to show \eqref{eq:W1-inf-bound} and \eqref{eq:det-est}, we first note that $J[\vy^\mu(\vx )]\geq c$ and $|\nabla \vy^\mu(\vx )|\leq C$ clearly hold for a.e. $\! \vx\in \Omega\setminus S_\mu$.
We now focus on $\vx\in S_\mu$. First, we proceed similarly to the proof of \cite[Theorem 6.1(ii)]{friesecke2002theorem} to show that there exists $\delta>0$ such that $B(\vx ,R)\cap \Omega\setminus S_\mu \neq\emptyset$ for $R:=\delta\sqrt{C\omega(\mu)}\mu^{-1}$ and all $\vx\in S_\mu$. Otherwise, $B(\vx ,R)\cap \Omega = B(\vx ,R)\cap S_\mu$ and, since $\Omega$ is Lipschitz, there exists $A>0$ such that
\begin{equation*}
AR^2\leq |B(\vx ,R)\cap \Omega| = |B(\vx ,R)\cap S_\mu|\leq |S_\mu|\leq \frac{C\omega(\mu)}{\mu^2}.
\end{equation*}
Setting $\delta = \sqrt{(2/A)}$ 
produces a contradiction. Returning to $\vx\in S_\mu$, we pick a $\vz\in  \Omega\setminus S_\mu$ such that $|\vx-\vz|\leq R$ and employ \eqref{eq:W2-inf-bound} to write
\begin{equation}\label{eq:est-grad-ymu}
|\nabla \vy^\mu(\vx ) - \nabla \vy^\mu(\vz )|\leq C\mu R = C\mu\delta\sqrt{\omega(\mu)}\mu^{-1} = C\delta\sqrt{\omega(\mu)}.
\end{equation}
Therefore,
\begin{align*}
  |\nabla \vy^\mu(\vx )| &\leq |\nabla \vy^\mu(\vx ) - \nabla \vy^\mu(\vz )| +|\nabla \vy^\mu(\vz )|
  \\ &\leq C(\delta\sqrt{\omega(\mu)}+\Vert\nabla \vy\Vert_{L^\infty(\Omega;\mathbb{R}^{3\times2})}) \leq C(1+\Vert\nabla \vy\Vert_{L^\infty(\Omega;\mathbb{R}^{3\times2})}),
\end{align*}
for $\mu$ sufficiently large; this shows \eqref{eq:W1-inf-bound}. Moreover, 
\begin{equation}\label{eq:J-est-intermediate-1}
J[\vy^\mu(\vx )] = J[\vy^\mu(\vz )] - \big(J[\vy^\mu(\vz )] - J[\vy^\mu(\vx )]\big)\geq c - \big|J[\vy^\mu(\vz )] - J[\vy^\mu(\vx )]\big|.
\end{equation}
Exploiting the Lipschitz continuity of $\vy\mapsto J[\vy]$ in $W^{1,\infty}$ within a ball of radius proportional to $(1+\Vert \vy\Vert_{W^{1,\infty}(\Omega;\mathbb{R}^3)})$, and combining the estimates \eqref{eq:W1-inf-bound} and \eqref{eq:est-grad-ymu} for $\vy^\mu$ yields
\begin{equation*}\label{eq:J-est-intermediate-2}
\big|J[\vy^\mu(\vz )] - J[\vy^\mu(\vx )]\big|\leq C \big(1+\Vert\nabla \vy\Vert_{L^\infty(\Omega;\mathbb{R}^{3\times2})} \big)\delta\sqrt{\omega(\mu)},
\end{equation*}
whence the right-hand side is smaller than $c/2$ provided $\mu$ is sufficiently large. Inserting this back into \eqref{eq:J-est-intermediate-1} gives $J[\vy^\mu(\vx )] \geq \frac{c}{2}$, which is \eqref{eq:det-est}.

It remains to prove \eqref{eq:H1-est}. We first write the error $\Vert \vy^\mu-\vy \Vert_{H^1(\Omega;\mathbb{R}^3)}^2$ as
\begin{equation*}
\Vert \vy^\mu-\vy \Vert_{H^1(\Omega;\mathbb{R}^3)}^2 = \int_{S_\mu} |\vy^\mu-\vy|^2+|\nabla\vy^\mu - \nabla\vy|^2d\vx ,
\end{equation*}
according to the definition of $S_\mu$ in Lemma \ref{lem:truncation} (truncation of $H^2$ functions). The $W^{1,\infty}$-bound \eqref{eq:W1-inf-bound} on $\vy^\mu$ in conjunction with the estimate \eqref{eq:smu-measure-bound} on the measure of $S_\mu$ produces the bound
\begin{equation*}
\Vert \vy^\mu-\vy \Vert_{H^1(\Omega;\mathbb{R}^3)}^2 \leq C \big(1+\|\vy\|_{W^{1,\infty}(\Omega;\mathbb{R}^3)}\big)^2 |S_\mu| \leq C \big(1+\|\vy\|_{W^{1,\infty}(\Omega;\mathbb{R}^3)}\big)^2\,\frac{\omega(\mu)}{\mu^2}.
\end{equation*}
Taking the square root of both sides yields the desired estimate.
\end{proof}

\begin{remark}
\rm
The argument in the proof of Lemma \ref{lem:approx-by-W2inf} is similar to that in the proof of \cite[Theorem 6.1(ii)]{friesecke2002theorem}, while the key difference is the object of interest. We want control over $\Vert\nabla \vy^\mu\Vert_{L^\infty(\Omega;\mathbb{R}^{3\times2})}$ and $J[\vy^{\mu}]$, while \cite{friesecke2002theorem} needs the gradient of the recovery sequence to be in an $L^\infty$-neighborhood of $SO(3)$.
\end{remark}

\begin{remark}
\rm
We stress that the significance of \eqref{eq:H1-est} is to provide a rate of convergence in $H^1$ relative to the blow up of the parameter $\mu$ that controls the $W^{2,\infty}$ norm, for which it is crucial that $\vy\in W^{1,\infty}(\Omega;\mathbb{R}^3)$. If $\vy\in H^2(\Omega;\mathbb{R}^3)$ but not in $W^{1,\infty}(\Omega;\mathbb{R}^3)$, then Sobolev embedding combined with \eqref{eq:H2-bound-mu} gives the reduced rate for all $2<p<\infty$
\[
\|\vy^\mu-\vy\|_{H^1(\Omega;\mathbb{R}^3)} \le \|\vy^\mu-\vy\|_{W^{1,p}(\Omega;\mathbb{R}^3)} \big|S_\mu\big|^{\frac{p-2}{2p}} 
\le C \|\vy\|_{H^2(\Omega;\mathbb{R}^3)} \mu^{-1+2/p}.
\]

\end{remark}

The next few results deal with numerical preliminaries that are important for energy scaling. The next result says that interpolating an $H^2$ function gives a discrete function that has a uniform discrete $H^2$-bound. We present the proof for completeness but the argument can be found in the proof of \cite[Proposition 5.3]{bonito2021dg}.

\begin{lemma}[Lagrange interpolation stability in $H^2$]\label{lem:lagrange-interp}
Let $\vy\in H^2(\Omega;\mathbb{R}^3)$. Then the Lagrange interpolant $I_h\vy\in\V_h$ satisfies $|I_h\vy|_{H^2_h(\Omega;\mathbb{R}^3)}\lesssim |\vy|_{H^2(\Omega;\mathbb{R}^3)}$.
\end{lemma}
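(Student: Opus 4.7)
The plan is to exploit that $\vy\in H^2(\Omega;\mathbb{R}^3)$ has a continuous trace of its gradient across interior edges, so that $\jump{\nabla \vy}\big|_e = 0$ for every $e\in\calE_h$. Consequently, on each edge we may rewrite
\[
\jump{\nabla I_h\vy}\big|_e = \jump{\nabla I_h\vy - \nabla \vy}\big|_e,
\]
and the estimate reduces to controlling the $L^2(e)$-norm of the interpolation error of the gradient, scaled appropriately by $h^{-1}$. Note that $I_h\vy$ is well defined because $H^2(\Omega)\hookrightarrow C^0(\overline{\Omega})$ for $\Omega\subset\mathbb{R}^2$.

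The main tool is the standard scaled trace inequality: for any $T\in\Th$, any edge $e\subset\partial T$, and any $\vv\in H^1(T;\mathbb{R}^{3\times 2})$,
\[
\|\vv\|_{L^2(e)}^2 \lesssim h^{-1}\|\vv\|_{L^2(T)}^2 + h\,\|\nabla \vv\|_{L^2(T;\mathbb{R}^{3\times 2\times 2})}^2.
\]
Applied with $\vv = \nabla(I_h\vy - \vy)$ from each side of $e$, and using that $I_h\vy$ is affine on each element so $D^2 I_h\vy|_T = 0$, we obtain
\[
\frac{1}{h}\int_e |\jump{\nabla I_h\vy}|^2 \lesssim \sum_{T\supset e}\Big(h^{-2}\|\nabla(I_h\vy-\vy)\|_{L^2(T)}^2 + \|D^2\vy\|_{L^2(T)}^2\Big).
\]

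I would then invoke the classical Lagrange interpolation error estimate $\|\nabla(I_h\vy - \vy)\|_{L^2(T)} \lesssim h\,|\vy|_{H^2(T;\mathbb{R}^3)}$, valid for $\vy\in H^2(T;\mathbb{R}^3)$ on a shape-regular element, to absorb the first term into the second. This yields
\[
\frac{1}{h}\int_e |\jump{\nabla I_h\vy}|^2 \lesssim \sum_{T\supset e}|\vy|_{H^2(T;\mathbb{R}^3)}^2.
\]
Summing over $e\in\calE_h$ and using the fact that shape-regularity implies a uniformly bounded number of edges per element (and thus a bounded overlap when we sum over neighboring elements), we arrive at
\[
|I_h\vy|_{H^2_h(\Omega;\mathbb{R}^3)}^2 = \sum_{e\in\calE_h}\frac{1}{h}\int_e |\jump{\nabla I_h\vy}|^2 \lesssim \sum_{T\in\Th}|\vy|_{H^2(T;\mathbb{R}^3)}^2 = |\vy|_{H^2(\Omega;\mathbb{R}^3)}^2,
\]
which is the desired bound. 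No step is a genuine obstacle here; the only subtle point is the observation that $\jump{\nabla\vy}=0$ across interior edges thanks to $\nabla\vy\in H^1(\Omega;\mathbb{R}^{3\times 2})$, which is what allows one to replace $\jump{\nabla I_h\vy}$ by an interpolation error that can be attacked with standard scaled trace and interpolation inequalities.
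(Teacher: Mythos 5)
Your proof is correct and follows essentially the same argument as the paper: use $\jump{\nabla\vy}|_e=0$ for $\vy\in H^2$, apply a scaled trace inequality to $\nabla(I_h\vy-\vy)$ on each element adjacent to $e$, note that $I_h\vy$ is piecewise affine so its Hessian vanishes, invoke the standard $H^1$ interpolation estimate, and sum over edges using bounded overlap. No significant deviation from the paper's own proof.
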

\begin{proof} 
Consider an arbitrary edge $e\in\Eh$, and its neighboring elements $T_1,T_2\in\Th$ and set $\omega_e = T_1\cup T_2$. Since $\vy\in H^2(\Omega;\mathbb{R}^3)$, the jump of $\nabla\vy$ across $e$ is zero. Then, by a trace inequality, interpolation estimate and the fact that $I_h\vy$ is linear on each element, we obtain for any component $y$ of $\vy$
\begin{align*}
\Vert \jump{\nabla I_hy} \Vert_{L^2(e;\mathbb{R}^{2})} &= \Vert \jump{\nabla I_hy - \nabla y} \Vert_{L^2(e;\mathbb{R}^{2})}\\
&\leq 
h^{-1/2}\Vert \nabla I_hy - \nabla y\Vert_{L^2(\omega_e;\mathbb{R}^{2})} + h^{1/2}\Vert D^2 I_hy - D^2y\Vert_{L^2(\omega_e;\mathbb{R}^{2\times2})} \\
&\lesssim h^{1/2}\Vert D^2y\Vert_{L^2(\omega_e;\mathbb{R}^{2\times2})}.
\end{align*}
Dividing both sides by $h^{1/2}$, squaring and summing over edges gives the assertion in view of definition \eqref{eq:discrete-H2}.
\end{proof}

We next establish other approximation properties of the Lagrange interpolant.

\begin{lemma}[discrete approximation of $H^2$-maps]\label{lem:lagrange-approx}
 Let $\vy\in H^2(\Omega; \mathbb{R}^3)$ satisfy $\vy\in W^{1,\infty}(\Omega;\mathbb{R}^3)$ and $J[\vy]\geq c$ a.e. in $\Omega$. For all $h>0$ sufficiently small, there exists $\vy_h\in \mathbb{V}_h$ such that $\|\vy_h\|_{W^{1,\infty}(\Omega;\mathbb{R}^3)}\lesssim 1 + \Vert\vy\Vert_{W^{1,\infty}(\Omega;\mathbb{R}^3)}$ and the following estimates are valid
\begin{align}
J[\vy_h]&\geq \frac{c}{4}, \label{eq:J-yh}\\
\Vert \vy_h-\vy\Vert_{H^1(\Omega;\mathbb{R}^3)} &\lesssim h \big(1+\|\vy\|_{W^{1,\infty}(\Omega;\mathbb{R}^3)}
+\Vert\vy\Vert_{H^2(\Omega;\mathbb{R}^3)} \big), \label{eq:y-yh-H1}\\
|\vy_h|_{H^2_h(\Omega;\mathbb{R}^3)} &\lesssim 1+\Vert\vy\Vert_{H^2(\Omega;\mathbb{R}^3)}. \label{eq:yh-H2}
\end{align}
\end{lemma}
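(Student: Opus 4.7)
The plan is to combine the Lusin-type truncation from Lemma \ref{lem:approx-by-W2inf} with a standard Lagrange interpolation. Since $\vy$ is only in $H^2(\Omega;\mathbb{R}^3)$, a direct Lagrange interpolation is problematic: we have no $W^{2,\infty}$ control on $\vy$, so interpolation error estimates in $W^{1,\infty}$ are unavailable, and interpolating $\vy$ directly need not preserve a positive pointwise lower bound on $J$. I will instead first replace $\vy$ by its truncation $\vy^\mu \in W^{2,\infty}(\Omega;\mathbb{R}^3)$ provided by Lemma \ref{lem:approx-by-W2inf}, then set $\vy_h := I_h \vy^\mu$, the nodal Lagrange interpolant onto $\V_h$. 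The truncation parameter is chosen as $\mu = \mu(h) = K/h$ with $K>0$ a small constant depending on $c$ and $\|\vy\|_{W^{1,\infty}(\Omega;\mathbb{R}^3)}$, so that $h\|\vy^\mu\|_{W^{2,\infty}(\Omega;\mathbb{R}^3)} \lesssim h\mu = K$ is a prescribed small number.

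With this choice, the bounds \eqref{eq:W1-inf-bound}--\eqref{eq:H1-est} on $\vy^\mu$ combined with elementwise Lagrange interpolation estimates yield all required properties. For the $W^{1,\infty}$ control, $\|\nabla \vy_h - \nabla \vy^\mu\|_{L^\infty(\Omega;\mathbb{R}^{3\times 2})} \lesssim h\|\vy^\mu\|_{W^{2,\infty}(\Omega;\mathbb{R}^3)} \lesssim K$, which together with \eqref{eq:W1-inf-bound} gives $\|\nabla \vy_h\|_{L^\infty(\Omega;\mathbb{R}^{3\times 2})} \lesssim 1 + \|\vy\|_{W^{1,\infty}(\Omega;\mathbb{R}^3)}$. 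For the lower bound on $J[\vy_h]$, we use $J[\vy^\mu] \ge c/2$ from \eqref{eq:det-est} and local Lipschitz continuity of $\vF \mapsto \det(\vF^T \vF)$ on sets of controlled gradient size to write pointwise
\[
|J[\vy_h](\vx)-J[\vy^\mu](\vx)| \lesssim \big(1+\|\vy\|_{W^{1,\infty}(\Omega;\mathbb{R}^3)}\big)^3 \|\nabla \vy_h - \nabla \vy^\mu\|_{L^\infty(\Omega;\mathbb{R}^{3\times 2})} \lesssim K',
\]
and then pick $K$ small enough so that the right-hand side is bounded by $c/4$, which delivers \eqref{eq:J-yh}.

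For the $H^1$ error, the triangle inequality and the standard Lagrange estimate give
\[
\|\vy_h - \vy\|_{H^1(\Omega;\mathbb{R}^3)} \le \|I_h \vy^\mu - \vy^\mu\|_{H^1(\Omega;\mathbb{R}^3)} + \|\vy^\mu - \vy\|_{H^1(\Omega;\mathbb{R}^3)}
\lesssim h\|\vy^\mu\|_{H^2(\Omega;\mathbb{R}^3)} + \big(1+\|\vy\|_{W^{1,\infty}(\Omega;\mathbb{R}^3)}\big)\frac{\sqrt{\omega(\mu)}}{\mu}.
\]
The first term is $\lesssim h\|\vy\|_{H^2(\Omega;\mathbb{R}^3)}$ by \eqref{eq:H2-bound-mu}, and with $\mu = K/h$ the second term equals $K^{-1} h \sqrt{\omega(K/h)}(1+\|\vy\|_{W^{1,\infty}(\Omega;\mathbb{R}^3)}) = o(h)(1+\|\vy\|_{W^{1,\infty}(\Omega;\mathbb{R}^3)})$ as $h\to 0$, because $\omega(\mu)\to 0$; this yields \eqref{eq:y-yh-H1}. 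Finally, the discrete $H^2_h$ bound $|\vy_h|_{H^2_h(\Omega;\mathbb{R}^3)} \lesssim |\vy^\mu|_{H^2(\Omega;\mathbb{R}^3)} \lesssim \|\vy\|_{H^2(\Omega;\mathbb{R}^3)}$ follows from Lemma \ref{lem:lagrange-interp} applied to $\vy^\mu$ and \eqref{eq:H2-bound-mu}, proving \eqref{eq:yh-H2}.

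The main difficulty is the coupling between $\mu$ and $h$: the truncation parameter must be large enough for $\vy^\mu$ to approximate $\vy$ in $H^1$ to order $h$, and simultaneously small enough that $h\mu \sim K$ remains a fixed small quantity so the first-order perturbation $\vy_h-\vy^\mu$ does not destroy the pointwise positivity of $J[\vy^\mu]$. The scaling $\mu \sim 1/h$ is the unique balance meeting both constraints, and the fact that $\omega(\mu)\to 0$ (without any quantitative rate) is enough to promote the truncation error from $O(1/\mu)$ to $o(h)$, which is precisely what \eqref{eq:y-yh-H1} demands.
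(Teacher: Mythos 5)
Your proposal is correct and takes essentially the same route as the paper: both invoke Lemma \ref{lem:approx-by-W2inf} with $\mu\sim\delta h^{-1}$, define $\vy_h = I_h\vy^\mu$, control $J[\vy_h]$ via the $W^{1,\infty}$ interpolation error and local Lipschitz continuity of $\vF\mapsto\det(\vF^T\vF)$ with $\delta$ chosen small, and obtain \eqref{eq:y-yh-H1}--\eqref{eq:yh-H2} by a triangle inequality together with \eqref{eq:H2-bound-mu}, \eqref{eq:H1-est}, and Lemma \ref{lem:lagrange-interp}. The only cosmetic difference is that you exploit $\omega(\mu)\to 0$ to get an $o(h)$ truncation error, whereas the paper simply uses the cruder $\sqrt{\omega(\mu)}\lesssim\Vert\vy\Vert_{H^2(\Omega;\mathbb{R}^3)}$, but both yield \eqref{eq:y-yh-H1}.
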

\begin{proof}
We first invoke Lemma \ref{lem:approx-by-W2inf} (truncation of $H^2$-functions with Lipschitz control) with $\mu_h= \delta h^{-1}$ to regularize $\vy$ with a $\vy^{\mu_h}$; the constant $\delta>0$ will be determined soon. We choose $\vy_h  = I_h\vy^{\mu_h}$ to be the Lagrange interpolant of $\vy^{\mu_h}$. Since $|\vy^{\mu_h}|_{W^{2,\infty}(\Omega)}\leq C\mu_h$, in light of \eqref{eq:W2-inf-bound2}, a standard error estimate for the Lagrange interpolant gives the $W^{1,\infty}$-error estimate
\begin{equation*}
\Vert \nabla \vy_h - \nabla \vy^{\mu_h}\Vert_{L^\infty(\Omega;\mathbb{R}^{3\times2})}\lesssim h |\vy^{\mu_h}|_{W^{2,\infty}(\Omega;\mathbb{R}^{3})}\lesssim h \mu_h = \delta.
\end{equation*}
This, together with \eqref{eq:W1-inf-bound}, implies uniform $W^{1,\infty}$-bounds for $\vy_h, \vy^{\mu_h}$, which in turn yield the following error estimate for $J[\vy_h]$ because of the Lipschitz continuity of $\vy\mapsto J[\vy]$ in $W^{1,\infty}$ within balls of radius proportional to $(1+\Vert\vy\Vert_{W^{1,\infty}(\Omega;\,\mathbb{R}^3)})$
\begin{equation*}
\Vert J[\vy_h]-J[\vy^{\mu_h}]\Vert_{L^\infty(\Omega)}\leq C\delta.
\end{equation*}
We choose $\delta$ sufficiently small, so that $C\delta < \frac{c}{4}$. Hence, for this choice of $\delta$, we have
\begin{equation*}
J[\vy_h] \geq J[\vy^{\mu_h}] - \Vert J[\vy_h]-J[\vy^{\mu_h}]\Vert_{L^\infty(\Omega)} \geq J[\vy^{\mu_h}] - C\delta\geq \frac{c}{2} - \frac{c}{4} = \frac{c}{4},
\end{equation*}
provided $h$ is sufficiently small, and correspondingly $\mu_h=\delta h^{-1}$ is sufficiently large for \eqref{eq:det-est} to be valid. This proves the first assertion \eqref{eq:J-yh}.

For the second assertion \eqref{eq:y-yh-H1}, we apply the triangle inequality
\begin{equation*}
  \Vert \vy-\vy_h\Vert_{H^1(\Omega;\mathbb{R}^3)} \leq \Vert \vy-\vy^{\mu_h}\Vert_{H^1(\Omega;\mathbb{R}^3)}
  +\Vert \vy^{\mu_h}-\vy_h\Vert_{H^1(\Omega;\mathbb{R}^3)},
\end{equation*}
and observe that \eqref{eq:H1-est} from Lemma \ref{lem:approx-by-W2inf} implies
\[
\Vert \vy-\vy^{\mu_h}\Vert_{H^1(\Omega;\mathbb{R}^3)} \leq C\mu_h^{-1} \big(1+\|\vy\|_{W^{1,\infty}(\Omega;\mathbb{R}^3)}\big).
\]
For the remaining term we utilize a standard error estimate for the Lagrange interpolant, in conjunction with \eqref{eq:H2-bound-mu} from Lemma \ref{lem:approx-by-W2inf}, to arrive at
\[
\Vert \vy^{\mu_h}-I_h \vy^{\mu_h}\Vert_{H^1(\Omega;\mathbb{R}^3)}\lesssim h |\vy^{\mu_h}|_{H^2(\Omega;\mathbb{R}^3)}
\lesssim h \Vert\vy\Vert_{H^2(\Omega;\mathbb{R}^3)}.
\]
Combining the last two bounds with $\mu_h^{-1}=h\delta^{-1}\lesssim h$ yields the desired estimate
\begin{align*}
\Vert \vy-\vy_h\Vert_{H^1(\Omega;\mathbb{R}^3)} 
\lesssim h \big(1+\|\vy\|_{W^{1,\infty}(\Omega;\mathbb{R}^3)}+\Vert\vy\Vert_{H^2(\Omega;\mathbb{R}^3)} \big)
\end{align*}
because $\delta$ has already been fixed.
Finally, the uniform $H^2_h$-bound \eqref{eq:yh-H2} follows from Lemma \ref{lem:lagrange-interp} (Lagrange interpolant stability in $H^2$) and the $H^2$-bound \eqref{eq:H2-bound-mu} on $\vy^\mu$ in Lemma \ref{lem:approx-by-W2inf}. This completes the proof.
\end{proof}

\subsubsection{Preliminaries for compactness}\label{S:prelim-compactness}
We show now how to extract $H^1$-compactness for sequences of continuous piecewise linear functions which are
not naturally in $H^2(\Omega)$. We proceed by discrete regularization via Cl\'{e}ment interpolation as in \cite{bonito2021dg}.

Suppose first that we have a function $v\in L^1(\Omega)$. Given a generic node $z \in \mathcal{N}_h$, with corresponding star (or patch) $\omega_z$, let $\Vh(\omega_z)$ be the space of continuous piecewise linear functions over $\omega_z$. We define the local $L^2$-projection over $\Vh(\omega_z)$ as follows:
\begin{equation}\label{eq:local-L2-proj}
v_z \in \Vh(\omega_z): \qquad \int_{\omega_z}(v_z - v)v_h = 0 \quad\forall v_h\in \Vh(\omega_z);
\end{equation}
note that $v_z=v$ if $v\in \Vh(\omega_z)$. We define the Cl\'{e}ment interpolant $\calI_h v\in\Vh$ to be
\begin{equation}\label{eq:quasi-interp}
\calI_h v:= \sum_{z\in\mathcal{N}_h} v_z(z) \phi_z,
\end{equation}
where $\{\phi_z\}_{z\in\mathcal{N}_h}$ denotes the nodal basis of $\mathbb{V}_h$ associated with $z\in\mathcal{N}_h$.

\begin{lemma}[regularization of piecewise constant functions]\label{lem:interp-pw-const} 
If $v:\Omega\to\mathbb{R}$ is a piecewise constant function over $\Th$, then its piecewise linear quasi-interpolant $\calI_hv\in C^0(\overline{\Omega})$ defined in \eqref{eq:local-L2-proj} and \eqref{eq:quasi-interp} satisfies the error estimates
\begin{equation}\label{eq:est-quasi-interp}
\Vert v- \mathcal{I}_hv\Vert_{L^2(\Omega)} + h\Vert \nabla\mathcal{I}_hv\Vert_{L^2(\Omega;\mathbb{R}^{2})} \lesssim h \sqrt{\sum_{e\in \mathcal{E}_h}\frac{1}{h}\int_e\jump{v}^2}.
\end{equation}
\end{lemma}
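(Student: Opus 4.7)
The plan is to work locally on each element $T\in\Th$, reduce both norms of $\calI_h v - v$ to a pointwise control of the nodal values of $\calI_h v$ near $T$, and then sum with bounded overlap. Fix $T\in\Th$ and let $c_T:=v|_T$. Because $v$ is piecewise constant and $\sum_{z\in\mathcal{N}_h(T)}\phi_z\equiv 1$ on $T$, I write
$$\calI_h v - c_T \;=\; \sum_{z\in\mathcal{N}_h(T)} \bigl(v_z(z)-c_T\bigr)\,\phi_z \qquad\text{on }T,$$
and standard scaling of the hat basis together with a local inverse estimate give
$$\|\calI_h v - v\|_{L^2(T)}^2 \lesssim h^2 \max_{z\in\mathcal{N}_h(T)}|v_z(z)-c_T|^2, \qquad \|\nabla \calI_h v\|_{L^2(T)}^2 \lesssim \max_{z\in\mathcal{N}_h(T)}|v_z(z)-c_T|^2.$$
Thus the problem reduces to controlling $|v_z(z)-c_T|$ by local edge jumps of $v$.

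The key ingredient is a stability estimate for the local $L^2$-projection combined with a telescoping argument inside the star $\omega_z$. Since $v_z$ minimizes $\|v-\cdot\|_{L^2(\omega_z)}$ over $\Vh(\omega_z)$ and $c_T$ is an admissible constant competitor,
$$\|v_z - c_T\|_{L^2(\omega_z)} \le \|v_z - v\|_{L^2(\omega_z)} + \|v - c_T\|_{L^2(\omega_z)} \le 2\|v-c_T\|_{L^2(\omega_z)}.$$
Now $v-c_T$ vanishes on $T\subset\omega_z$, and for every other $T'\subset\omega_z$ shape regularity bounds the number of elements in the star uniformly and supplies a short chain $T=T_0,T_1,\dots,T_k=T'$ of edge-neighbors inside $\omega_z$, across which $v|_{T'}-c_T$ telescopes into $\sum_{i<k}\pm\jump{v}|_{e_i}$. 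Using $|T'|\approx h^2$ in 2D and $\int_e\jump{v}^2=|e|\,|\jump{v}|_e|^2\approx h\,|\jump{v}|_e|^2$, this yields
$$\|v-c_T\|_{L^2(\omega_z)}^2 \lesssim h^2 \sum_{e\subset\omega_z} \tfrac{1}{h}\int_e \jump{v}^2.$$
Applying the inverse estimate $\|w\|_{L^\infty(\omega_z)}^2\lesssim h^{-2}\|w\|_{L^2(\omega_z)}^2$ to the continuous piecewise linear function $w=v_z-c_T$ then gives the target bound
$$|v_z(z)-c_T|^2 \lesssim \sum_{e\subset\omega_z} \tfrac{1}{h}\int_e \jump{v}^2.$$

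Inserting this into the element bounds and summing over $T\in\Th$, with bounded overlap of the patches $\omega_T:=\bigcup_{z\in\mathcal{N}_h(T)}\omega_z$ guaranteed by shape regularity, delivers both the $L^2$ estimate and the $h$-weighted gradient estimate in \eqref{eq:est-quasi-interp}. The most delicate step is the telescoping bound on $v|_{T'}-c_T$ inside $\omega_z$: shape regularity is used twice there, to bound the number of elements in the star and to limit the length of the connecting chains so that only a uniformly bounded number of jump contributions is absorbed. Everything else — $L^2$-stability of the local projection, scaling of the nodal basis, and inverse estimates on continuous piecewise linears — is routine.
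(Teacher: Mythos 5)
Your proof is correct and self-contained, whereas the paper disposes of this lemma in a single line by citing it as a corollary of a result in \cite{bonito2021dg}. Your argument is the standard mechanism behind such quasi-interpolation estimates, and it is reassuring to see it spelled out: the $L^2$-best-approximation property of the local projection $v_z$ lets you replace $v_z$ by the nearby constant $c_T$ at the price of $2\|v-c_T\|_{L^2(\omega_z)}$, and the telescoping-through-edge-neighbors argument converts the piecewise-constant oscillation of $v$ inside the star $\omega_z$ into jumps across the (boundedly many) edges emanating from $z$. All the individual ingredients — partition of unity to write $\calI_h v - c_T = \sum_{z\in\mathcal{N}_h(T)}(v_z(z)-c_T)\phi_z$ on $T$, the scaling $\|\phi_z\|_{L^2(T)}\approx h$, $\|\nabla\phi_z\|_{L^2(T)}\approx 1$, the $L^\infty$–$L^2$ inverse estimate on the continuous piecewise linear function $v_z - c_T$, and the $|e|\approx h$ conversion $|\jump{v}|_e|^2 \approx h^{-1}\int_e\jump{v}^2$ — are used exactly where they should be, and the final summation over $T$ only double-counts each edge a uniformly bounded number of times by shape regularity, so both the $L^2$-error term and the $h$-weighted $H^1$-seminorm term land on the stated right-hand side. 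The one detail worth making explicit if you were to write this up is that the connecting chain from $T$ to $T'$ inside $\omega_z$ can always be taken to consist of elements sharing the vertex $z$, crossing only interior mesh edges, even when $z$ lies on $\partial\Omega$ (where $\omega_z$ is a fan rather than a full disk); this is what guarantees that every edge you absorb actually belongs to $\mathcal{E}_h$. In short: correct, and more informative than the paper's citation-only proof, but not a different mathematical route — it is presumably the proof the cited reference carries out.
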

\begin{proof}
  This is a corollary of \cite[Lemma 2.1]{bonito2021dg}.
\end{proof}

This lemma is instrumental to derive compactness properties from sequences of functions with uniform $H^2_h$-bounds. This is what we establish next. The proof follows the proof of \cite[Proposition 5.1]{bonito2021dg}, but we sketch it for completeness.

\begin{lemma}[compactness properties]\label{lem:compactness-properties} 
Let $\vy_h\in\V_h$ satisfy the uniform bounds 
$\Vert \nabla \vy_h\Vert_{L^2(\Omega;\mathbb{R}^{3\times2})}\lesssim 1$ and $|\vy_h|_{H^2_h(\Omega;\mathbb{R}^3)}\lesssim 1$. Then there exists $\vy\in H^2(\Omega;\mathbb{R}^3)$ such that a subsequence (not relabeled) of $\vy_h-\overline{\vy}_h$ converges strongly
\begin{equation*} 
(\vy_h-\overline{\vy}_h)\to \vy
\end{equation*}
in $H^1(\Omega;\mathbb{R}^3)$ as $h\to0$, where $\overline{\vy}_h:=|\Omega|^{-1}\int_{\Omega}\vy_h$ is the mean value of $\vy_h$.
\end{lemma}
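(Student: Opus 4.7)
The plan is to use the Clément-type quasi-interpolant of Lemma~\ref{lem:interp-pw-const} to regularize $\nabla\vy_h$, which is piecewise constant componentwise, into an $H^1$-function that is close to $\nabla\vy_h$ in $L^2$. Since Lemma~\ref{lem:interp-pw-const} controls the error precisely by the sum of squared jumps weighted as in \eqref{eq:discrete-H2}, the assumed bound $|\vy_h|_{H^2_h(\Omega;\mathbb{R}^3)}\lesssim 1$ translates into an $H^1$-bound on the regularization and a first-order $L^2$-bound on the regularization error. This is the mechanism that bypasses the fact that $\vy_h$ itself is not in $H^2(\Omega;\mathbb{R}^3)$.

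Concretely, I would apply $\calI_h$ componentwise to $\nabla\vy_h\in L^\infty(\Omega;\mathbb{R}^{3\times 2})$. Lemma~\ref{lem:interp-pw-const} then gives
\begin{equation*}
\|\nabla\vy_h-\calI_h\nabla\vy_h\|_{L^2(\Omega;\mathbb{R}^{3\times 2})}
+ h\,\|\nabla\calI_h\nabla\vy_h\|_{L^2(\Omega;\mathbb{R}^{3\times 2\times 2})}
\lesssim h\,|\vy_h|_{H^2_h(\Omega;\mathbb{R}^3)}\lesssim h.
\end{equation*}
Combined with $\|\calI_h\nabla\vy_h\|_{L^2}\lesssim \|\nabla\vy_h\|_{L^2}+h\lesssim 1$ (via the triangle inequality and the first estimate above), this shows that $\{\calI_h\nabla\vy_h\}$ is uniformly bounded in $H^1(\Omega;\mathbb{R}^{3\times 2})$. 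Since $\Omega$ is a bounded Lipschitz domain, the Rellich--Kondrachov embedding $H^1\hookrightarrow L^2$ is compact, so we may extract a subsequence with $\calI_h\nabla\vy_h\to \vG$ strongly in $L^2(\Omega;\mathbb{R}^{3\times 2})$ and weakly in $H^1$ for some $\vG\in H^1(\Omega;\mathbb{R}^{3\times 2})$. The error estimate then upgrades this to the strong convergence $\nabla\vy_h\to\vG$ in $L^2(\Omega;\mathbb{R}^{3\times 2})$.

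Next, I would use the Poincar\'{e}--Wirtinger inequality to bound $\|\vy_h-\overline{\vy}_h\|_{L^2(\Omega;\mathbb{R}^3)}\lesssim \|\nabla\vy_h\|_{L^2(\Omega;\mathbb{R}^{3\times 2})}\lesssim 1$, so $\{\vy_h-\overline{\vy}_h\}$ is bounded in $H^1(\Omega;\mathbb{R}^3)$. Passing to a further subsequence we obtain $\vy_h-\overline{\vy}_h\rightharpoonup \vy$ in $H^1$ and strongly in $L^2$ for some $\vy\in H^1(\Omega;\mathbb{R}^3)$. The weak convergence $\nabla\vy_h\rightharpoonup\nabla\vy$ in $L^2$ and the strong convergence $\nabla\vy_h\to \vG$ just established force $\nabla\vy=\vG\in H^1(\Omega;\mathbb{R}^{3\times 2})$, so $\vy\in H^2(\Omega;\mathbb{R}^3)$. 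Finally, strong $L^2$ convergence of $\vy_h-\overline{\vy}_h$ combined with strong $L^2$ convergence of $\nabla\vy_h$ to $\nabla\vy$ yields strong convergence in $H^1(\Omega;\mathbb{R}^3)$.

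The only mildly delicate point is making sure a single subsequence serves for both $\vy_h-\overline{\vy}_h$ and $\calI_h\nabla\vy_h$, which is routine (nested extraction), and keeping track that the weak $L^2$ limit of $\nabla(\vy_h-\overline{\vy}_h)=\nabla\vy_h$ matches the strong limit $\vG$. No hard estimates arise beyond an application of Lemma~\ref{lem:interp-pw-const} and standard Sobolev compactness; the conceptual heart of the argument is that the jump-seminorm regularization in $E_h$ provides exactly the discrete $H^2$-control needed to make Clément interpolation of the gradient compact in $L^2$.
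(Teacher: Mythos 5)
Your proof is correct and follows essentially the same route as the paper's: regularize the piecewise constant $\nabla\vy_h$ via the Cl\'ement quasi-interpolant, invoke Lemma~\ref{lem:interp-pw-const} together with the uniform $H^2_h$-bound to obtain a uniform $H^1$-bound on $\calI_h\nabla\vy_h$ and a first-order $L^2$ error, then pass to limits via Rellich--Kondrachov and Poincar\'e--Wirtinger and identify the weak and strong limits. The only difference is cosmetic ordering (you regularize the gradient first, the paper states the Poincar\'e step first), which does not change the argument.
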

\begin{proof}

Since $\vy_h$ satisfies the uniform bound $\Vert \nabla \vy_h\Vert_{L^2(\Omega;\mathbb{R}^{3\times2})}\lesssim 1$, Poincar\'e inequality further implies the uniform bound $\|\vy_h-\overline{\vy}_h\|_{H^1(\Omega;\mathbb{R}^3)}\lesssim 1$. Therefore, there is $\vy \in H^1(\Omega;\mathbb{R}^3)$ and a subsequence (not relabeled) of $(\vy_h-\overline{\vy}_h)$ such that $(\vy_h-\overline{\vy}_h)\to \vy$ strongly in $L^2(\Omega;\mathbb{R}^3)$ and weakly in $H^1(\Omega;\mathbb{R}^3)$.

To extract additional regularity of $\vy$, we consider $\vw_h = \calI_h(\nabla \vy_h) \in [\Vh]^{3\times2}$ with $\calI_h$ defined in \eqref{eq:quasi-interp}. In view of the uniform bound $|\vy_h|_{H^2_h(\Omega;\mathbb{R}^{3})} \lesssim 1$, \eqref{eq:est-quasi-interp} of Lemma \ref{lem:interp-pw-const} implies that $\vw_h$ is uniformly bounded in $H^1(\Omega;\mathbb{R}^{3\times2})$ and $\vw_h - \nabla \vy_h\to0$ strongly in $L^2(\Omega;\mathbb{R}^{3\times2})$, whence $\vw_h\to \nabla \vy$ weakly in $L^2(\Omega;\mathbb{R}^{3\times2})$. The uniform $H^1$-bound of $\vw_h$ means that a subsequence (not relabeled) of $\vw_h\to \nabla \vy$ strongly in $L^2(\Omega;\mathbb{R}^{3\times2})$ and $\nabla \vy \in H^1(\Omega;\mathbb{R}^{3\times2})$. Consequently, a subsequence (not relabeled) of $\nabla \vy_h\to \nabla \vy$ strongly in $L^2(\Omega;\mathbb{R}^{3\times2})$ and completes the proof.
\end{proof}

\subsection{Energy scaling and compactness}

Our next result is a crucial discrete energy scaling estimate. It states that if there is an $H^2$-deformation $\vy$ that satisfies the target metric (i.e. an $H^2$-isometric immersion), then the discrete energy $E_h[\vy_h]$ associated with the discrete approximation $\vy_h$ of Lemma \ref{lem:lagrange-approx} (discrete approximation of $H^2$ maps) scales like $E_h[\vy_h] \lesssim h^2$. In the language of $\Gamma$-convergence, this is a recovery sequence result.

\begin{proposition}[recovery sequence]\label{prop:energy-scaling}
If $\vy\in H^2(\Omega;\mathbb{R}^3) \cap W^{1,\infty}(\Omega;\mathbb{R}^3)$ is the deformation of Assumption \ref{as:H2-immersibility} (regularity), then for any $h$ sufficiently small there exists $\vy_h\in \mathbb{V}_h$ such that
\begin{equation}
E_h[\vy_h]\lesssim h^2\big(1+\|\vy\|_{W^{1,\infty}(\Omega;\mathbb{R}^3)}+\Vert\vy\Vert_{H^2(\Omega;\mathbb{R}^3)} \big)^2.
\end{equation}
\end{proposition}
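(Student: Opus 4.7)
The plan is to take $\vy_h\in\V_h$ to be exactly the discrete approximation furnished by Lemma \ref{lem:lagrange-approx} (discrete approximation of $H^2$-maps) applied to the isometric immersion $\vy$ of Assumption \ref{as:H2-immersibility}. This immediately delivers the four bounds that drive the entire estimate: the uniform lower bound $J[\vy_h]\ge c/4$, the uniform $W^{1,\infty}$-bound $\|\vy_h\|_{W^{1,\infty}(\Omega;\mathbb{R}^3)}\lesssim 1+\|\vy\|_{W^{1,\infty}(\Omega;\mathbb{R}^3)}$, the $H^1$-approximation estimate $\|\vy-\vy_h\|_{H^1(\Omega;\mathbb{R}^3)}\lesssim h(1+\|\vy\|_{W^{1,\infty}}+\|\vy\|_{H^2})$, and the discrete seminorm bound $|\vy_h|_{H^2_h(\Omega;\mathbb{R}^3)}\lesssim 1+\|\vy\|_{H^2(\Omega;\mathbb{R}^3)}$. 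The last of these, combined with the definition $R_h[\vy_h]=c_r h^2|\vy_h|^2_{H^2_h(\Omega;\mathbb{R}^3)}$, immediately accounts for the regularization contribution to $E_h[\vy_h]$ with the desired scaling.

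It remains to bound the stretching part $E[\vy_h]=\int_\Omega W(\vx,\nabla\vy_h)\,d\vx$. Following the roadmap of the section, Remark \ref{rmk:str-SO3} (special rotations) yields
\begin{equation*}
\vR(\vx):=\vL_\vn^{-1/2}[\nabla\vy(\vx),\vb(\vx)]\vL_\vm^{1/2}\in SO(3)
\quad\text{a.e.\ in }\Omega,
\end{equation*}
because $\vy$ satisfies $E[\vy]=0$. Setting
\begin{equation*}
\vA_h:=\vL_{\vn_h}^{-1/2}[\nabla\vy_h,\vb_h]\vL_\vm^{1/2}-\vR,
\end{equation*}
frame indifference (left multiplication by $\vR^T$ is norm-preserving) together with the determinant identities $\det[\vF,\vb(\vF)]=1$ from the proof of Corollary \ref{cor:stretching-nondegeneracy} and $\det\vL_\vm=\det\vL_\vn^{-1}=1$ yield $\det(\Id_3+\vR^T\vA_h)=1$ pointwise (this requires $J[\vy_h]>0$, which holds by Lemma \ref{lem:lagrange-approx}). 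Lemma \ref{lem:expansion} (scaling of neo-Hookean formula near identity) then produces
\begin{equation*}
W(\vx,\nabla\vy_h)=|\Id_3+\vR^T\vA_h|^2-3\le 3|\vR^T\vA_h|^2=3|\vA_h|^2
\quad\text{a.e.\ in }\Omega.
\end{equation*}
The remaining task reduces to showing $\|\vA_h\|_{L^2(\Omega;\mathbb{R}^{3\times3})}^2\lesssim h^2(1+\|\vy\|_{W^{1,\infty}}+\|\vy\|_{H^2})^2$.

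For this I plan to establish a local Lipschitz bound of the nonlinear map $\vF\mapsto\vL_{\vn(\vF)}^{-1/2}[\vF,\vb(\vF)]\vL_\vm^{1/2}$. Lemma \ref{lem:lagrange-approx} guarantees that both $\nabla\vy$ and $\nabla\vy_h$ take values in a fixed compact subset of rank-two matrices whose Jacobian is bounded away from zero (using \eqref{eq:lambda-bounds} for $\vy$ and \eqref{eq:J-yh} for $\vy_h$) and whose Frobenius norm is uniformly controlled. On this set both $\vn(\vF)=\vF\vm/|\vF\vm|$ and $\vb(\vF)=(\vF_1\times\vF_2)/J(\vF)$ are smooth, so combined with the uniform $L^\infty$-bounds on $\vL_\vm^{1/2}$ and $\vL_{\vn_h}^{-1/2}$ that follow from \eqref{eq:s-lower-bound}, a mean value estimate gives the pointwise bound $|\vA_h(\vx)|\lesssim|\nabla\vy(\vx)-\nabla\vy_h(\vx)|$, with implicit constant depending only on $\|\vy\|_{W^{1,\infty}}$ and the lower bound on $J$. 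Integrating in $\vx$ and invoking the $H^1$-error estimate \eqref{eq:y-yh-H1} finishes the bound on $\|\vA_h\|_{L^2}^2$.

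The hard step is precisely this local Lipschitz estimate: $\vn(\vF)$ and $\vb(\vF)$ are singular when $\vF\vm$ or $\vF_1\times\vF_2$ vanishes, so everything hinges on the uniform pointwise lower bound $J[\vy_h]\ge c/4$ and the uniform $W^{1,\infty}$-control of $\vy_h$ from Lemma \ref{lem:lagrange-approx}. This is why the Lusin-type truncation encoded in Lemma \ref{lem:approx-by-W2inf} is indispensable: a direct Lagrange interpolant of $\vy\in H^2(\Omega;\mathbb{R}^3)$ would not provide any lower bound on the discrete Jacobian, and one would be unable to control $\vA_h$ away from the singular set of $W$.
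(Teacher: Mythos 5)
Your proposal is correct and follows essentially the same route as the paper: invoke Lemma~\ref{lem:lagrange-approx} for the recovery $\vy_h$, bound the regularization term via the discrete $H^2_h$-seminorm estimate, use Remark~\ref{rmk:str-SO3} to extract $\vR\in SO(3)$, apply frame indifference and Lemma~\ref{lem:expansion} to reduce to $\|\vA_h\|_{L^2}^2$, and finish with the $H^1$-error estimate \eqref{eq:y-yh-H1}. The only cosmetic difference is that you bundle the paper's explicit Steps~1 and~2 (separate Lipschitz estimates for $\vb$ and $\vn$) into a single ``smooth map on a compact set of matrices with Jacobian bounded below, hence locally Lipschitz'' argument, which is a valid repackaging of the same ingredients.
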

\begin{proof}
By Assumption \ref{as:H2-immersibility}, we know that $\vy\in H^2(\Omega;\mathbb{R}^3)$ satisfies $\nabla\vy^T\nabla\vy = g$, whence $E[\vy] = 0$ by Proposition \ref{prop:target-metric} (target metric), as well as $J[\vy] = \lambda\geq c_{s,s_0}>0$ by \eqref{eq:lambda-bounds}. By Lemma \ref{lem:lagrange-approx}, for $h$ sufficiently small, there exists $\vy_h\in \mathbb{V}_h$ such that $J[\vy_h(\vx )]\geq \frac{c_{s,s_0}}{4}$ and $|\vy_h|_{H^2_h(\Omega;\mathbb{R}^3)}\lesssim 1+ \Vert\vy\Vert_{H^2(\Omega;\mathbb{R}^3)}$. The latter implies that $R_h[\vy_h]=c_rh^2|\vy_h|_{H^2_h(\Omega;\mathbb{R}^3)}^2\lesssim h^2(1+ \Vert\vy\Vert_{H^2(\Omega;\mathbb{R}^3)})^2$ in \eqref{eq:discrete-energy}. It thus remains to show that $\int_\Omega W(\vx ,\nabla \vy_h)d\vx \lesssim h^2$, for which we resort to \eqref{eq:stretching-energy-density}
\begin{equation}\label{eq:neo-hookean-bis}
W(\vx ,\nabla \vy_h) = \big| \vL_{\vn_h}^{-1/2} [ \nabla \vy_h, \vb_h ] \vL_\vm^{1/2} \big|^2 - 3
\end{equation}   
where the kinematic constraint $\vn_h = \vn[\vy_h]$ and scaled normal is $\vb_h = \vb[\vy_h]$ are defined in \eqref{eq:shortenings}. We split the proof into three steps.

\medskip

{\it Step 1. Error estimate of scaled normal vectors.} We recall that these vectors are
\[
\vb = \frac{\partial_1\vy\times \partial_2\vy}{J[\vy]},
\qquad
\vb_h = \frac{\partial_1\vy_h\times \partial_2\vy_h}{J[\vy_h]},
\]
with $J[\vy] = |\partial_1\vy\times \partial_2\vy|^2$ and $J[\vy_h]=|\partial_1\vy_h\times \partial_2\vy_h|^2$. We claim that $|\vb - \vb_h|\lesssim |\nabla \vy - \nabla\vy_h|$ pointwise for which we write
\begin{align*}
\big|\vb - \vb_h\big| \leq \big|\partial_1\vy\times \partial_2\vy \big| \left|\frac{1}{J[\vy]} - \frac{1}{J[\vy_h]}\right|+ \frac{1}{J[\vy_h]} \big|\partial_1\vy\times \partial_2\vy - \partial_1\vy_h\times \partial_2\vy_h\big|.
\end{align*}
Since $J[\vy_h]\geq \frac{c_{s,s_0}}{4}$, according to \eqref{eq:J-yh}, the Lipschitz bound on $\vy$ yields
\begin{align*}
\big|\vb - \vb_h\big| \lesssim \left|\frac{1}{J[\vy]} - \frac{1}{J[\vy_h]}\right|+\big|\partial_1\vy\times \partial_2\vy - \partial_1\vy_h\times \partial_2\vy_h\big|.
\end{align*}
We now add and subtract $\partial_1\vy\times \partial_2\vy_h$, and apply the triangle inequality along with the bound $\|\vy_h\|_{W^{1,\infty}(\Omega;\mathbb{R}^3)}\lesssim 1 + \|\vy\|_{W^{1,\infty}(\Omega;\mathbb{R}^3)}$ from Lemma \ref{lem:lagrange-approx}, to further estimate
\begin{align*}
\big|\vb - \vb_h\big| \lesssim \left|\frac{1}{J[\vy]}- \frac{1}{J[\vy_h]}\right| + \big|\nabla\vy-\nabla\vy_h\big|.
\end{align*}
Since $x\mapsto \frac{1}{x}$ is Lipschitz on $[\frac{c_{s,s_0}}{4},\infty)$, we deduce
\begin{equation*}
\left|\frac{1}{J[\vy]}- \frac{1}{J[\vy_h]}\right|\lesssim \big|J[\vy]- J[\vy_h]\big|.
\end{equation*}
Likewise, on bounded subsets of $\mathbb{R}^{3\times2}$, the map $\vF\mapsto J(\vF)$ is Lipschitz. Hence, we again use the uniform $W^{1,\infty}$-bound of $\vy_h$ from Lemma \ref{lem:lagrange-approx} to obtain
\begin{equation*}
\left|J[\vy]- J[\vy_h]\right|\lesssim \big|\nabla\vy-\nabla\vy_h\big|.
\end{equation*}
Combining these bounds gives the desired pointwise error estimate for the scaled normals with hidden constant proportional to $(1+\Vert \vy\Vert_{W^{1,\infty}(\Omega;\mathbb{R}^3)})$
\begin{equation*}
|\vb - \vb_h|\lesssim |\nabla \vy - \nabla\vy_h|.
\end{equation*}

{\it Step 2. Estimate on the kinematic constraint.} Since $J[\vy_h]=\det \I[\vy_h] \ge \frac{c_{s,s_0}}{4}$, according to \eqref{eq:J-yh}, we deduce that $\I[\vy_h]$ is uniformly positive definite and
\[
\big| \nabla\vy_h \vm  \big|^2 = \vm^T \nabla\vy_h^T \nabla\vy_h \vm = \vm^T \I[\vy_h] \vm \ge c'
\]
for a constant $c'>0$ depending on $c_{s,s_0}$; a similar estimate is valid for $\big| \nabla\vy \vm  \big|$. Since $\vy_h \in W^{1,\infty}(\Omega;\mathbb{R}^3)$ is uniformly bounded, in view of Lemma \ref{lem:lagrange-approx}, and the map $\vx\mapsto \vx/|\vx|$ is Lipschitz on bounded subsets of $\{\vx\in \mathbb{R}^2: |\vx|\geq \sqrt{c'}\}$, we obtain the following pointwise bound with hidden constant proportional to $(1+\Vert \vy\Vert_{W^{1,\infty}(\Omega;\mathbb{R}^3)})$
\begin{equation*}
|\vn-\vn_h| = \left|\frac{\nabla \vy\vm}{|\nabla \vy\vm|} - \frac{\nabla \vy_h\vm}{|\nabla \vy_h\vm|} \right|\lesssim |\nabla \vy\vm - \nabla \vy_h\vm|\leq |\nabla \vy - \nabla \vy_h|.
\end{equation*}

{\it Step 3. Energy Scaling.} We now rewrite the neo-Hookean relation \eqref{eq:neo-hookean-bis} of $W(\vx ,\nabla\vy_h)$ as follows after adding and subtracting $\vR := \vL_{\vn}^{-1/2}[\nabla\vy,\; \vb]\vL_\vm^{1/2}\in SO(3)$:
\[
W(\vx ,\nabla\vy_h) = \big| \vR + \vA_h \big|^2 - 3, \quad
\vA_h := \vL_{\vn_h}^{-1/2}[\nabla\vy_h,\; \vb_h]\vL_\vm^{1/2}-\vL_{\vn}^{-1/2}[\nabla\vy,\; \vb]\vL_\vm^{1/2}.
\]
The fact that $\vR\in SO(3)$ is a consequence of Remark \ref{rmk:str-SO3} (special rotations) provided $\I[\vy] = g$ or equivalently $W(x,\nabla\vy) = 0$.
We exploit frame indifference to multiply by $\vR^T$ without changing the energy density
\begin{align*}
W(\vx ,\nabla\vy_h) = |\vR^T\vR +\vR^T \vA_h|^2-3 = |\Id_3 +\vR^T\vA_h|^2-3.
\end{align*}
Arguing as in the proof of Corollary \ref{cor:stretching-nondegeneracy}, we see that $\det \vL_{\vn_h} = \det \vL_\vm =\det [\nabla\vy_h,\; \vb_h] = 1$ and deduce $\det\left(\Id_3 +\vR^T\vA_h\right) = \det(\vR^T\vL_{\vn_h}^{-1/2}[\nabla\vy_h,\; \vb_h]\vL_\vm^{1/2})=1$. Applying Lemma \ref{lem:expansion} (scaling of neo-Hookean formula near identity), we obtain
\begin{align*}
\int_\Omega W(\vx ,\nabla\vy_h)d\vx  = \int_\Omega |\Id_3 +\vR^T\vA_h|^2-3\text{ }d\vx  \leq 3\int_\Omega |\vR^T\vA_h|^2 \leq 3\int_\Omega |\vA_h|^2.
\end{align*}
It thus suffices to show $\int_\Omega |\vA_h|^2d\vx \lesssim h^2$. Adding and subtracting $\vL_{\vn_h}^{-1/2} [\nabla\vy,\vb] \vL_\vm^{1/2}$, and using the triangle and Young's inequalities, yields
\begin{align*}
  |\vA_h|^2 &\lesssim \big|\vL_{\vn_h}^{-1/2}([\nabla\vy_h,\; \vb_h]-[\nabla\vy,\; \vb])\vL_\vm^{1/2}\big|^2
  + \big| (\vL_{\vn}^{-1/2}- \vL_{\vn_h}^{-1/2})[\nabla\vy,\; \vb]\vL_\vm^{1/2} \big|^2\\
  &\lesssim \big|[\nabla\vy_h,\; \vb_h]-[\nabla\vy,\; \vb]\big|^2 + \big|\vL_{\vn}^{-1/2}- \vL_{\vn_h}^{-1/2} \big|^2
  \lesssim \big|\nabla\vy- \nabla\vy_h\big|^2,
\end{align*}
where the last inequality follows from the preceding steps. In fact, Step 1 implies
\[
\big|[\nabla\vy_h,\; \vb_h]-[\nabla\vy,\; \vb]\big| \lesssim \big|\nabla\vy_h-\nabla\vy\big|,
\]
while Step 2, together with \eqref{eq:Ln-inv}, the assumptions on $s$ in \eqref{eq:s-lower-bound}, and $s\in L^\infty(\Omega)$ gives
\[
\big|\vL_{\vn}^{-1/2}- \vL_{\vn_h}^{-1/2}\big| \lesssim \big|\vn- \vn_h\big| \lesssim \big|\nabla\vy- \nabla\vy_h\big|,
\]
with hidden constant proportional to $(1+\Vert\vy\Vert_{W^{1,\infty}(\Omega;\mathbb{R}^3)})$.
Finally, applying \eqref{eq:y-yh-H1} of Lemma \ref{lem:lagrange-approx} (discrete approximation of $H^2$-maps) yields
\begin{equation}
 \int_\Omega |\vA_h|^2d\vx \lesssim \int_\Omega |\nabla\vy- \nabla\vy_h|^2d\vx  = \Vert\vy-\vy_h\Vert_{H^1(\Omega;\mathbb{R}^3)}^2 \lesssim h^2,
\end{equation}
with hidden constant proportional to $\big(1+\|\vy\|_{W^{1,\infty}(\Omega;\mathbb{R}^3)}+\Vert\vy\Vert_{H^2(\Omega;\mathbb{R}^3)} \big)^2$.
This is the desired estimate.
\end{proof}

\begin{remark}[regularity of $\vm$]\label{rmk:regularity-m}
\rm
It is worth realizing that the proof of Proposition \ref{prop:energy-scaling} only requires regularity on $\vy$, but not of $\vm$ beyond $L^\infty(\Omega;\mathbb{S}^1)$. We stress, however, that $\vy \in H^2(\Omega;\mathbb{R}^3) \cap W^{1,\infty}(\Omega;\mathbb{R}^3)$ implies $g=\nabla\vy^T\nabla\vy \in H^1(\Omega;\mathbb{R}^{2\times2})\cap L^{\infty}(\Omega;\mathbb{R}^{2\times2})$ with $g$ given in \eqref{eq:target-metric} in terms of $\vm$. This regularity is borderline and does not guarantee continuity of $g$ (or $\vm$) in $\Omega$.
\end{remark}

The next Proposition establishes compactness: if a discrete deformation $\vy_h$ satisfies an appropriate energy scaling, then a subsequence converges to a minimizer of $E$.
\begin{proposition}[compactness]\label{prop:selection-mechanism}
Let $\vy_h\in\V_h$ satisfy  $E_h[\vy_h]\leq Ch^2$ for a positive constant $C$, and let $\overline{\vy}_h:=|\Omega|^{-1}\int_{\Omega}\vy_h \,d\vx$ be its mean value. Then there is a subsequence (not relabeled) of $\vy_h-\overline{\vy}_h$ that converges in $H^1(\Omega;\mathbb{R}^3)$  strongly to a limit $\vy^*\in H^2(\Omega;\mathbb{R}^3)$ and $E[\vy^*]=0$.
\end{proposition}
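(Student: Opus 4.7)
The plan is to combine uniform bounds from the hypothesis $E_h[\vy_h]\leq Ch^2$ with the abstract compactness of Lemma~\ref{lem:compactness-properties}, and then verify that the limiting deformation has vanishing stretching energy via Fatou's lemma. First, since $R_h[\vy_h]=c_rh^2|\vy_h|^2_{H^2_h(\Omega;\mathbb{R}^3)}\le E_h[\vy_h]\le Ch^2$, I obtain the uniform bound $|\vy_h|_{H^2_h(\Omega;\mathbb{R}^3)}\le C$. Next, Proposition~\ref{prop:coercivity} (coercivity) combined with $E[\vy_h]\le E_h[\vy_h]\le Ch^2$ yields the uniform $L^2$-bounds
\[
\|\nabla\vy_h\|_{L^2(\Omega;\mathbb{R}^{3\times2})}\le C,\qquad \|J[\vy_h]^{-1/2}\|_{L^2(\Omega)}\le C.
\]

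With the $H^1$ and discrete $H^2_h$ bounds in hand, I would invoke Lemma~\ref{lem:compactness-properties} (compactness properties) to extract a subsequence (not relabeled) of $\vy_h-\overline{\vy}_h$ that converges strongly in $H^1(\Omega;\mathbb{R}^3)$ to some $\vy^{\ast}\in H^2(\Omega;\mathbb{R}^3)$. Since $\nabla\vy_h=\nabla(\vy_h-\overline{\vy}_h)\to\nabla\vy^{\ast}$ strongly in $L^2$, passing to a further subsequence yields pointwise a.e.\ convergence $\nabla\vy_h(\vx)\to\nabla\vy^{\ast}(\vx)$, and hence also $J[\vy_h](\vx)\to J[\vy^{\ast}](\vx)$ a.e.\ by continuity of $\vF\mapsto J(\vF)$.

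The main obstacle is to handle the passage to the limit in $W$, because $W(\vx,\nabla\vy)$ blows up as $J[\vy]\to0$; we must therefore first secure a pointwise lower bound on $J[\vy^{\ast}]$. To this end, I would apply Fatou's lemma to the nonnegative sequence $J[\vy_h]^{-1}$, using the uniform $L^2$-bound on $J[\vy_h]^{-1/2}$:
\[
\int_\Omega J[\vy^{\ast}]^{-1}\,d\vx\le \liminf_{h\to0}\int_\Omega J[\vy_h]^{-1}\,d\vx\le C,
\]
which forces $J[\vy^{\ast}]>0$ a.e.\ in $\Omega$. Consequently $\vn[\vy^{\ast}]$ and $\vb[\vy^{\ast}]$ are well defined and $W(\vx,\nabla\vy_h)\to W(\vx,\nabla\vy^{\ast})$ pointwise a.e.\ by continuity of $W(\vx,\cdot)$ on matrices with strictly positive $J$.

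Finally, since the matrix $\vL_{\vn_h}^{-1/2}[\nabla\vy_h,\vb_h]\vL_{\vm}^{1/2}$ has determinant one (as exploited in Corollary~\ref{cor:stretching-nondegeneracy}), Proposition~\ref{prop:nondegen-neo-Hookean} gives $W(\vx,\nabla\vy_h)\ge0$ a.e., so Fatou's lemma applies once more and yields
\[
E[\vy^{\ast}]=\int_\Omega W(\vx,\nabla\vy^{\ast})\,d\vx\le \liminf_{h\to0}\int_\Omega W(\vx,\nabla\vy_h)\,d\vx\le \liminf_{h\to0}E_h[\vy_h]\le \lim_{h\to0}Ch^2=0.
\]
Combined with $E[\vy^{\ast}]\ge0$, this gives $E[\vy^{\ast}]=0$, concluding the proof. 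The only delicate point is the Fatou step for $J[\vy_h]^{-1}$, which is what makes the $J^{-1/2}$ term in the coercivity estimate essential.
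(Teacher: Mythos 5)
Your proof is correct, and it takes a genuinely different route at the decisive step. Both you and the paper begin the same way: coercivity and the $R_h$ term give uniform $H^1$ and $H^2_h$ bounds, Lemma~\ref{lem:compactness-properties} supplies a strongly convergent subsequence $\vy_h-\overline{\vy}_h\to\vy^*\in H^2$, and a further subsequence gives pointwise a.e.\ convergence of $\nabla\vy_h$ and $J[\vy_h]$. You also close the same way, via Fatou applied to the nonnegative integrand $W(\vx,\nabla\vy_h)$. The divergence is in how the two proofs rule out degeneracy of $J[\vy^*]$. The paper argues combinatorially: it introduces the bad set $B_{h,\eta}=\{J[\vy_h]<1/\eta\}$, shows via the lower bound $W\ge C(s,s_0)/J[\vy_h]-3$ that $|B_{h,\eta}|\lesssim h^2/(\eta C(s,s_0)-3)$, and then exploits that $\nabla\vy_h$ is elementwise constant on a shape-regular mesh (each element has area $\gtrsim h^2$) to conclude that $B_{h,\eta}$ is empty for $\eta$ large, uniformly in $h$. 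This delivers a \emph{uniform} lower bound $J[\vy_h]\ge1/\eta$ and hence $J[\vy^*]\ge1/\eta$. You instead exploit the second term in the coercivity estimate of Proposition~\ref{prop:coercivity}, namely the uniform $L^2$-bound on $J[\vy_h]^{-1/2}$, and apply Fatou directly to $J[\vy_h]^{-1}$ to conclude $\int_\Omega J[\vy^*]^{-1}<\infty$, hence $J[\vy^*]>0$ a.e. Your argument is purely measure-theoretic, makes no use of the finite element structure, and is in this sense more general; the paper's argument buys a uniform positive lower bound on $J[\vy_h]$, which is stronger but not needed to pass to the limit in $W$. One small point you should add: to conclude that $W(\vx,\nabla\vy_h)\to W(\vx,\nabla\vy^*)$ a.e.\ you must also know that $|\nabla\vy^*\vm|>0$ a.e.\ so that $\vn[\vy^*]$ is defined. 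This follows from your positivity of $J[\vy^*]$ via the identity $J[\vy]=|\nabla\vy\,\vm|^2|\nabla\vy\,\vm_\perp|^2-(\vm\cdot\I[\vy]\vm_\perp)^2\le|\nabla\vy\,\vm|^2|\nabla\vy\,\vm_\perp|^2$, which the paper records explicitly; you should state it too.
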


\begin{proof} 
Proposition \ref{prop:coercivity} (coercivity) implies that $\Vert \nabla\vy_h\Vert_{L^2(\Omega;\mathbb{R}^{3\times2})}^2\lesssim 1$, whereas
\begin{equation*}
h^2|\vy_h|^2_{H^2_h(\Omega;\mathbb{R}^3)} \lesssim c_r  h^2|\vy_h|^2_{H^2_h(\Omega;\mathbb{R}^3)} + \int_\Omega W(\vx,\nabla \vy_h) = E_h[\vy_h]\lesssim h^2
\end{equation*}
yields $|\vy_h|^2_{H^2_h(\Omega;\mathbb{R}^3)}\lesssim 1$. Therefore, Lemma \ref{lem:compactness-properties} (compactness properties) guarantees the existence of $\vy^*\in H^2(\Omega;\mathbb{R}^3)$ such that a subsequence (not relabeled) $(\vy_h-\overline{\vy}_h)\to \vy^*$ converges strongly in $H^1(\Omega;\mathbb{R}^3)$. 
It remains to show that $E[\vy^*]=0$.

We can choose a further subsequence $\vy_h$ such that $\nabla\vy_h\to\nabla\vy^*$ a.e.\ in $\Omega$, whence
\[
J[\vy_h]\to J[\vy^*],
\quad
\nabla\vy_h\,\vm \to \nabla\vy^*\,\vm,
\quad
\textrm{a.e.\ in } \Omega.
\]
Our goal is to show that $\int_\Omega W(\vx,\nabla \vy_h)d\vx\to \int_\Omega W(\vx,\nabla \vy^*)d\vx$, for which we observe that 
\begin{equation*}
W(\vx,\nabla\vy_h) + 3 = \left|\vL_{\vn_h}^{-1/2}[\nabla\vy_h,\vb_h]\vL_\vm^{1/2}\right|^2 \ge \frac{C(s,s_0)}{J[\vy_h]},
\end{equation*}
which is a by-product of the proof of Proposition \ref{prop:coercivity} (coercivity) with $C(s,s_0)$ is the constant in \eqref{eq:str-coercivity}.
We first show that $J[\vy_h]$ does not vanish and the singular term $\frac{1}{J[\vy_h]}$ is well defined. If $B_{h,\eta} := \{ x\in \Omega : J[\vy_h] < \frac{1}{\eta}\}$, then we obtain
\begin{equation*}
  W(\vx,\nabla\vy_h) \geq \frac{C(s,s_0)}{J[\vy_h]}  - 3\geq \eta C(s,s_0) - 3
  \quad\forall \, x\in B_{h,\eta},
\end{equation*}
where $\eta>3C(s,s_0)^{-1}$ is to be determined.
This implies that
\begin{equation*}
|B_{h,\eta}|\leq \frac{1}{\eta C(s,s_0) - 3}\int_{B_{h,\eta}}W(\vx,\nabla\vy_h)\;d\vx\leq \frac{E_h[\vy_h]}{\eta C(s,s_0) - 3}\leq  \frac{Ch^2}{\eta C(s,s_0) - 3}.
\end{equation*}
Since $\nabla\vy_h$ is piecewise constant, $B_{h,\eta}$ is a collection of $N_\eta$ elements of the mesh $\Th$. By the shape regularity of $\Th$, there is $\gamma>0$ such that $|B_{h,\eta}| \geq N_\eta\gamma h^2$. Hence,
\begin{equation*}
N_\eta\gamma h^2 \leq \frac{Ch^2}{\eta C(s,s_0) - 3}.
\end{equation*}
Taking $\eta>0$ sufficiently large implies that $N_\eta=0$ and $J[\vy_h] \ge \frac{1}{\eta}$ a.e.\ in $\Omega$, whence we infer that $J[\vy^*] \ge \frac{1}{\eta}$ a.e.\ in $\Omega$. Note that since the matrix $[\vm,\vm_\perp]\in SO(2)$, we may rewrite $J[\vy]$ as
\begin{equation*}
J[\vy] = \det\left([\vm,\vm_\perp]^T\nabla\vy^T\nabla\vy[\vm,\vm_\perp]\right) =|\nabla \vy\,\vm|^2|\nabla \vy\,\vm_\perp|^2 - (\vm\cdot\I[\vy]\vm_\perp)^2.
\end{equation*}
As a result, we have $0< \frac{1}{\eta} \le J[\vy^*]\leq |\nabla\vy_h\vm|^2 |\nabla\vy_h\vm_\perp|^2$, and $|\nabla\vy_h\vm|^2>0$ a.e.\ in $\Omega$. 
Combined with continuity of $x\mapsto 1/x$ for positive $x$, we have $\frac{1}{J[\vy_h]}\to \frac{1}{J[\vy^*]}$ and $\vn_h = \frac{\nabla\vy_h\vm}{|\nabla\vy_h\vm|}\to \frac{\nabla\vy^*\vm}{|\nabla\vy^*\vm|} = \vn^*$ pointwise a.e. in $\Omega$. Thus, both 
$$\vL_{\vn_h}^{-1/2}[\nabla\vy_h,\vb_h]\vL_\vm^{1/2} \to \vL_{\vn^*}^{-1/2}[\nabla\vy^*,\vb^*]\vL_\vm^{1/2},$$
and 
$$W(\vx,\nabla \vy_h)\to W(\vx,\nabla \vy^*)$$ 
pointwise a.e.\ in $\Omega$.

Since $W(\vx,\nabla \vy_h)\geq 0$, by virtue of Corollary \ref{cor:stretching-nondegeneracy} (nondegeneray of stretching energy), we apply Fatou's Lemma to deduce the desired result
\begin{equation*}
E[\vy^*] = \int_\Omega W(\vx,\nabla \vy^*)d\vx \leq \liminf_{h\to0} \int_\Omega W(\vx,\nabla \vy_h)\, d\vx\leq\lim_{h\to0}E_h[\vy_h] = 0.
\end{equation*}
This concludes the proof.
\end{proof}

We are now ready to prove the convergence of discrete minimizers.
\begin{proof}[Proof of Theorem \ref{thm:compactness}]
The existence of a deformation $\vy$ satisfying Assumption \ref{as:H2-immersibility} (regularity) yields a quadratic energy scaling according to Proposition \ref{prop:energy-scaling} (recovery sequence). Since $\vy_h$ is a global minimizer of $E_h$, $E_h[\vy_h]\leq Ch^2$, and Proposition \ref{prop:selection-mechanism} (compactness) applies. Therefore, the limit $\vy^*\in H^2(\Omega;\mathbb{R}^3)$ satisfies $E[\vy^*] = 0$ and Proposition \ref{prop:target-metric} (target metric) implies that $\vy^*$ is an isometric immersion of $g$, i.e.\ $\I[\vy^*] = g$.
\end{proof}

\subsection{Piecewise $H^2$-deformations}
This section is dedicated to the analysis of piecewise $H^2$-deformations rather than globally $H^2$-deformations. The inspiration for this extension comes from \cite{bartels2022modeling} and \cite{bartels2022error}. For physical applications, the motivation comes from nonisometric origami \cite{plucinsky2016programming,plucinsky2018patterning,plucinsky2018actuation}.

Let $\Omega = \cup_{i=1}^n\Omega_i$ be a disjoint partition of $\Omega$, where each $\Omega_i$ is polygonal. We denote by $\Gamma$ the boundaries of all $\Omega_i$'s, which is the {\it set of creases or folding set}. We then define the space of piecewise $H^2$ functions to be
\begin{equation}\label{eq:pw-H2}
\mathbb{V}_\Gamma = \{ \vy\in W^{1,\infty}(\Omega;\mathbb{R}^3) : \vy|_{\Omega_i}\in H^2(\Omega_i;\mathbb{R}^3)\text{ for all } i=1,\ldots,n\}.
\end{equation}
We shall approximate minimizers $\vy^*\in\mathbb{V}_\Gamma$ of \eqref{eq:stretching-energy} with folding across $\Gamma$. To this end, we make the geometric assumption 
\begin{equation}\label{eq:fitted-mesh}
\Gamma\subset \bigcup_{e\in \mathcal{E}_h}e,
\end{equation}
 i.e. the mesh is fitted to $\Gamma$. We denote by $\mathcal{E}^i_h$ the \emph{interior} skeleton to each $\Omega_i$ (so that edges on $\Gamma$ are excluded) and define the new discrete energy with folds as
\begin{equation}\label{eq:discrete-energy-folds}
  E_{h,\Gamma}[\vy_h] :=\int_\Omega W_h(\vx ,\nabla\vy_h)d\vx  + R_{h,\Gamma}[\vy_h],
\end{equation}
where the regularization term is given by $R_{h,\Gamma}[\vy_h]:=c_rh^2 |\vy_h|^2_{H^2_h(\Omega\setminus\Gamma;\mathbb{R}^3)}$
and
\begin{equation}\label{eq:discrete-H2-Gamma}
|\vy_h|^2_{H^2_h(\Omega\setminus\Gamma;\,\mathbb{R}^3)} := \sum_{i=1}^n\sum_{e\in\mathcal{E}^{i}_h}\frac{1}{h}\int_e \big|\jump{\nabla \vy_h}\big|^2.
\end{equation}
We point out that \eqref{eq:discrete-H2-Gamma} does not include jumps across $\Gamma$, which in turn allows for folds across $\Gamma$ without penalty on the energy. This modeling feature is responsible for the formation of nonisometric origami within this setting.

We next adjust the regularity Assumption \ref{as:H2-immersibility} to the new framework.
\begin{assumption}[regularity with creases]\label{as:pwH2-immersibility}
There exists a $\vy\in W^{1,\infty}(\Omega;\mathbb{R}^3)$ such that $\I[\vy] = g$ a.e. in $\Omega$ and $\vy|_{\Omega_i}\in H^2(\Omega_i;\mathbb{R}^3)\cap C^1(\overline{\Omega}_i;\mathbb{R}^3)$ for all $i=1,\ldots,n$.
\end{assumption}

We relax the $H^2$-regularity but observe that $\vy|_{\Omega_i}\in C^1(\overline{\Omega}_i;\mathbb{R}^3)$ implies that $g|_{\Omega_i} \in C(\overline{\Omega}_i;\mathbb{R}^{2\times2})$ is slightly stronger than the mere $L^\infty\cap H^1$-regularity of $g$ as discussed in Remark \ref{rmk:regularity-m} (regularity of $\vm$). We point out that Assumption \ref{as:pwH2-immersibility} might not be always satisfied. It is possible that such a piecewise $H^2$-isometric immersion does not exist if one of $\Omega_i$ has reentrant corners.

We now state the new recovery sequence result.
\begin{proposition}[recovery sequence]\label{prop:recovery-pw-H2}
If $\vy\in\V_\Gamma$ is the deformation of Assumption \ref{as:pwH2-immersibility}, then for $h$ sufficiently small the Lagrange interpolant $\vy_h = I_h\vy \in \mathbb{V}_h$ satisfies
\begin{equation*}
E_{h,\Gamma}[\vy_h]\lesssim h^2 \left(1+\sum_{i=1}^n\left(\Vert\vy\Vert_{C^1(\overline{\Omega}_i;\mathbb{R}^3)} +\Vert\vy\Vert_{H^2(\Omega_i;\mathbb{R}^3)}\right)^2\right).
\end{equation*}
\end{proposition}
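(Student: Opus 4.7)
The plan is to adapt the proof of Proposition \ref{prop:energy-scaling} (recovery sequence) to the piecewise setting, working subdomain-by-subdomain and exploiting the fact that the mesh is fitted to the crease set $\Gamma$ (so that every element $T\in\Th$ is contained in exactly one $\Omega_i$). A crucial simplification is that the hypothesis $\vy|_{\Omega_i}\in C^1(\overline{\Omega}_i;\mathbb{R}^3)$ replaces the need for Lusin truncation used in the globally $H^2$ case: the Lagrange interpolant $\vy_h=I_h\vy$ is well-defined since $\vy$ is globally continuous (in fact $W^{1,\infty}$), and on each $\Omega_i$ the standard interpolation error $\|\nabla(\vy-I_h\vy)\|_{L^\infty(\Omega_i;\mathbb{R}^{3\times 2})}\to 0$ as $h\to 0$ (with rate controlled by the modulus of continuity of $\nabla\vy$ on $\overline{\Omega}_i$) follows directly from uniform continuity of $\nabla\vy$.

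I would first establish the discrete analogues of the controls in Lemma \ref{lem:lagrange-approx} but now on each $\Omega_i$ separately: a uniform bound $\|\vy_h\|_{W^{1,\infty}(\Omega_i;\mathbb{R}^3)}\lesssim 1+\|\vy\|_{W^{1,\infty}(\Omega;\mathbb{R}^3)}$, a pointwise lower bound $J[\vy_h]\ge c_{s,s_0}/4$ on $\Omega_i$ for $h$ small (obtained from the uniform $W^{1,\infty}$ convergence of $I_h\vy$ to $\vy$ on $\overline{\Omega}_i$ combined with the local Lipschitz continuity of $\vF\mapsto J(\vF)$ and the fact that $J[\vy]=\lambda\ge c_{s,s_0}>0$), and the standard $H^1$-interpolation error $\|\vy-\vy_h\|_{H^1(\Omega_i;\mathbb{R}^3)}\lesssim h\,|\vy|_{H^2(\Omega_i;\mathbb{R}^3)}$.

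With these pieces in place, the stretching-energy bound is obtained exactly as in Steps 1--3 of Proposition \ref{prop:energy-scaling}, applied elementwise. On each element $T\subset\Omega_i$, Step 1 yields $|\vb[\vy]-\vb[\vy_h]|\lesssim|\nabla\vy-\nabla\vy_h|$, Step 2 gives $|\vn[\vy]-\vn[\vy_h]|\lesssim|\nabla\vy-\nabla\vy_h|$, and Step 3 uses the rotation $\vR=\vL_{\vn}^{-1/2}[\nabla\vy,\vb]\vL_\vm^{1/2}\in SO(3)$ (available a.e. in $\Omega_i$ by Remark \ref{rmk:str-SO3} and $\I[\vy]=g$), frame indifference, and Lemma \ref{lem:expansion} to conclude $W(\vx,\nabla\vy_h)\lesssim|\nabla\vy-\nabla\vy_h|^2$ pointwise, with constants depending only on $\|\vy\|_{W^{1,\infty}(\Omega)}$ and the bounds on $s,s_0$. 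Integrating over $\Omega_i$ and summing over $i$ produces
\begin{equation*}
\int_\Omega W(\vx,\nabla\vy_h)\,d\vx \;\lesssim\; \sum_{i=1}^n\|\vy-\vy_h\|_{H^1(\Omega_i;\mathbb{R}^3)}^2 \;\lesssim\; h^2\sum_{i=1}^n|\vy|_{H^2(\Omega_i;\mathbb{R}^3)}^2 .
\end{equation*}

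For the regularization term, I would apply Lemma \ref{lem:lagrange-interp} (Lagrange interpolation stability in $H^2$) separately on each subdomain $\Omega_i$: since $\vy|_{\Omega_i}\in H^2(\Omega_i;\mathbb{R}^3)$ and the interior edge patches $\omega_e$ for $e\in\calE_h^i$ lie entirely inside $\Omega_i$ (by the fitted-mesh hypothesis \eqref{eq:fitted-mesh}), the trace/interpolation argument gives
\begin{equation*}
\sum_{e\in\calE_h^i}\frac{1}{h}\int_e|\jump{\nabla\vy_h}|^2 \;\lesssim\; |\vy|_{H^2(\Omega_i;\mathbb{R}^3)}^2,
\end{equation*}
which upon summation yields $R_{h,\Gamma}[\vy_h]\lesssim h^2\sum_i|\vy|_{H^2(\Omega_i;\mathbb{R}^3)}^2$. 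Adding the stretching and regularization bounds gives the claimed estimate. The main delicate point, though not truly an obstacle, is verifying that the uniform $J[\vy_h]\ge c_{s,s_0}/4$ bound holds uniformly across all elements of all subdomains for $h$ small; this is where the $C^1(\overline{\Omega}_i)$ rather than merely $H^2(\Omega_i)$ hypothesis is essential, since it upgrades interpolation convergence from $H^1$ to $W^{1,\infty}$ on each closed piece and circumvents the need for Lusin truncation that drove Lemma \ref{lem:approx-by-W2inf} in the globally $H^2$ setting.
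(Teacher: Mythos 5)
Your proof is correct and mirrors the paper's argument: both exploit the $C^1(\overline{\Omega}_i)$ hypothesis to obtain uniform $W^{1,\infty}$ interpolation convergence via a modulus of continuity (thereby bypassing Lusin truncation), establish a uniform positive lower bound on $J[\vy_h]$, apply Steps 1--3 of Proposition \ref{prop:energy-scaling} and Lemma \ref{lem:lagrange-interp} subdomain-by-subdomain under the fitted-mesh hypothesis, and sum over $i$. The only cosmetic difference is your lower bound $J[\vy_h]\ge c_{s,s_0}/4$ versus the paper's $c_{s,s_0}/2$, which is immaterial.
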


\begin{proof}
In view of \eqref{eq:fitted-mesh} and $\vy|_{\Omega_i} \in H^2(\Omega_i;\mathbb{R}^3)$ from Assumption \ref{as:pwH2-immersibility}, Lemma \ref{lem:lagrange-interp} (Lagrange interpolation stability in $H^2$) applied to each $\Omega_i$ gives  $|\vy_h|_{H^2_h(\Omega_i;\mathbb{R}^3)}\lesssim |\vy|_{H^2(\Omega_i;\mathbb{R}^3)}$. Moreover, we also have the standard error estimate $\Vert \vy - \vy_h\Vert_{H^1(\Omega_i;\mathbb{R}^3)}\lesssim h |\vy|_{H^2(\Omega_i;\mathbb{R}^3)}$.

To derive the energy scaling, we first show that $J[\vy_h]\geq c_{s,s_0}/2$ a.e. for sufficiently small $h$ provided $J[\vy]=\det g=\lambda \ge c_{s,s_0} > 0$. Since $\vy\in C^1(\overline{\Omega}_i;\mathbb{R}^3)$, the function $\nabla\vy$ is uniformly continuous in $\Omega_i$ with modulus of continuity $\sigma_i(t)$ (i.e. $\sigma_i(t)\to0$ as $t\to0$). Therefore, $\Vert\nabla\vy-\nabla\vy_h\Vert_{L^\infty(\Omega;\mathbb{R}^{3\times2})} \lesssim \sigma_i(h)$ and, for $h$ sufficiently small, we obtain
\begin{equation*}
J[\vy_h]\geq J[\vy] - \big|J[\vy_h] - J[\vy] \big|\geq c_{s,s_0} - C\sigma_i(h) \ge  \frac{c_{s,s_0}}{2}\quad \text{ in } \Omega_i
\end{equation*}
because $J$ is Lipschitz continuous in $W^{1,\infty}(\Omega;\mathbb{R}^3)$ on bounded balls.
Applying the arguments in Proposition \ref{prop:energy-scaling} (recovery sequence), we deduce
\begin{equation*}
\int_{\Omega_i} W(\vx ,\nabla\vy_h)d\vx  + c_r h^2|\vy_h|_{H^2_h(\Omega_i;\mathbb{R}^3)}^2 \lesssim h^2\left(1+\Vert\vy\Vert_{C^1(\overline{\Omega}_i;\mathbb{R}^3)} +\Vert\vy\Vert_{H^2(\Omega_i;\mathbb{R}^3)}\right)^2
\end{equation*}
on each $\Omega_i$. Summing over $\Omega_i$ yields the desired result.
\end{proof}

The compactness result in the previous section carries over to the case with jumps, but with a small modification. The analog to Theorem \ref{thm:compactness} reads as follows. 

\begin{theorem}[convergence of minimizers with creases]
Let Assumption \ref{as:pwH2-immersibility} hold and let $\vy_h$ be a global minimizer of $E_{h,\Gamma}$ with $\overline{\vy}_h=|\Omega|^{-1} \int_\Omega \vy_h$. Then, as $h\to0$, $\vy_h-\overline{\vy}_h$ has a strongly convergent subsequence (not relabeled) $\vy_h - \overline{\vy}_h\to \vy^*$ in $H^1(\Omega;\mathbb{R}^3)$ to a function $\vy^*\in \mathbb{V}_\Gamma$ that satisfies $E[\vy^*]=0$ and $\I[\vy^*] = g$ a.e.\ in $\Omega$.
\end{theorem}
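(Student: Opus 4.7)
The strategy is to reproduce the three-step argument that led to Theorem~\ref{thm:compactness} (energy scaling, compactness, identification of the limit), but adapted to the piecewise setting where the discrete regularization $R_{h,\Gamma}$ only controls jumps of $\nabla\vy_h$ across edges of $\mathcal{E}_h^i$ interior to each subdomain $\Omega_i$.

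First, since $\vy_h$ is a global minimizer of $E_{h,\Gamma}$ and Assumption~\ref{as:pwH2-immersibility} is in force, the recovery sequence of Proposition~\ref{prop:recovery-pw-H2} yields $E_{h,\Gamma}[\vy_h]\lesssim h^2$. Coercivity (Proposition~\ref{prop:coercivity}) then gives a uniform bound $\Vert\nabla\vy_h\Vert_{L^2(\Omega;\mathbb{R}^{3\times2})}\lesssim 1$, and the regularization term produces a uniform piecewise bound $|\vy_h|_{H^2_h(\Omega\setminus\Gamma;\mathbb{R}^3)}\lesssim 1$; in particular $|\vy_h|_{H^2_h(\Omega_i;\mathbb{R}^3)}\lesssim 1$ on each $\Omega_i$. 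Combining these with Poincar\'e's inequality on $\Omega$, the shifted sequence $\vy_h-\overline{\vy}_h$ is uniformly bounded in $H^1(\Omega;\mathbb{R}^3)$, hence a subsequence (not relabeled) converges weakly in $H^1(\Omega;\mathbb{R}^3)$ and strongly in $L^2(\Omega;\mathbb{R}^3)$ to some $\vy^*\in H^1(\Omega;\mathbb{R}^3)$.

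Second, I upgrade this convergence to strong $H^1$ convergence by applying Lemma~\ref{lem:compactness-properties} \emph{on each subdomain} $\Omega_i$. The proof of that lemma relied exclusively on the Cl\'ement regularization of $\nabla\vy_h$ built from jumps across interior edges, so it goes through verbatim on each $\Omega_i$ since $\mathcal{E}_h^i$ is precisely the interior skeleton of $\Omega_i$ (by the fitted mesh assumption~\eqref{eq:fitted-mesh}). This yields $\vy^*|_{\Omega_i}\in H^2(\Omega_i;\mathbb{R}^3)$ and strong $H^1(\Omega_i;\mathbb{R}^3)$ convergence of $\vy_h-\overline{\vy}_h$ on each $\Omega_i$ (after selecting a common subsequence by finiteness of the partition); summing over $i$ delivers strong convergence in $H^1(\Omega;\mathbb{R}^3)$. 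Passing to a further subsequence, $\nabla\vy_h\to\nabla\vy^*$ a.e.\ in $\Omega$.

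Third, I identify the limit by reusing the argument of Proposition~\ref{prop:selection-mechanism}. The pointwise lower bound $W(\vx,\nabla\vy_h)+3\geq C(s,s_0)/J[\vy_h]$ combined with $E_{h,\Gamma}[\vy_h]\lesssim h^2$ and the elementwise constancy of $J[\vy_h]$ forces, via the same shape-regularity counting of ``bad'' elements, the uniform bound $J[\vy_h]\ge 1/\eta$ a.e.\ in $\Omega$ for some $\eta>0$ independent of $h$. This transfers to $J[\vy^*]\ge 1/\eta$ a.e., which in turn guarantees that $|\nabla\vy^*\,\vm|$ is bounded below and that $\vn_h\to\vn^*$ and $\vb_h\to\vb^*$ a.e. Fatou's lemma then gives $E[\vy^*]\le\liminf_h\int_\Omega W(\vx,\nabla\vy_h)\le\lim_h E_{h,\Gamma}[\vy_h]=0$. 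By Proposition~\ref{prop:target-metric}, $\I[\vy^*]=g$ a.e.\ in $\Omega$; since $g$ has eigenvalues $\lambda^2,\lambda^{-1}$ bounded uniformly by \eqref{eq:lambda-bounds}, we conclude $|\nabla\vy^*|^2=\tr g\in L^\infty(\Omega)$, so $\vy^*\in W^{1,\infty}(\Omega;\mathbb{R}^3)$ and therefore $\vy^*\in\mathbb{V}_\Gamma$.

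The main obstacle is the second step, namely checking that the compactness machinery of Lemma~\ref{lem:compactness-properties} survives the absence of control on jumps of $\nabla\vy_h$ across the crease set $\Gamma$. The resolution is simply to apply that lemma subdomain-by-subdomain, which is possible precisely because the fitted mesh condition~\eqref{eq:fitted-mesh} makes $\mathcal{E}_h^i$ the full interior skeleton of $\Omega_i$; one then glues the local strong $H^1$ limits into a global $H^1$ limit, whose global Sobolev character (as opposed to merely piecewise $H^1$) is inherited from the global $H^1$-boundedness of the original sequence.
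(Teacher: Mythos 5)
Your proof is correct and follows essentially the same three-step structure as the paper's: recovery sequence gives $E_{h,\Gamma}[\vy_h]\lesssim h^2$, compactness is applied subdomain-by-subdomain (valid because the fitted mesh condition~\eqref{eq:fitted-mesh} makes $\mathcal{E}_h^i$ the interior skeleton of $\Omega_i$), and the identification $E[\vy^*]=0$ reuses the bad-element counting and Fatou argument of Proposition~\ref{prop:selection-mechanism}. The one place where you genuinely diverge from the paper is the final verification that $\vy^*\in W^{1,\infty}(\Omega;\mathbb{R}^3)$: the paper argues piecewise — $\vy^*|_{\Omega_i}\in W^{1,\infty}(\Omega_i;\mathbb{R}^3)$ from the local metric identity, continuity of traces across $\Gamma$ forced by $\vy^*\in H^1(\Omega;\mathbb{R}^3)$, then piecewise Lipschitz plus global continuity gives global Lipschitz — whereas you observe directly that $\I[\vy^*]=g$ a.e.\ in $\Omega$ (since the $\Omega_i$ partition $\Omega$ up to a null set) forces $|\nabla\vy^*|^2=\tr g=\lambda^2+\lambda^{-1}\in L^\infty(\Omega)$ by~\eqref{eq:lambda-bounds}, hence $\nabla\vy^*\in L^\infty(\Omega;\mathbb{R}^{3\times2})$ and $\vy^*$ is Lipschitz on the bounded Lipschitz domain $\Omega$. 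Your route is cleaner and avoids the trace-matching step; both are sound.
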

\begin{proof}
We first apply Proposition \ref{prop:recovery-pw-H2} (recovery sequence) to deduce that $E_{h,\Gamma}[\vy_h]\leq E_{h,\Gamma}[I_h\vy] \lesssim h^2$ because $\vy_h$ is a global minimizer of $E_{h,\Gamma}$. Moreover, since $E[\vy_h] \le E_{\Gamma,h}[\vy_h] \lesssim h^2$ by definition \eqref{eq:discrete-energy-folds}, Proposition \ref{prop:coercivity} (coercivity) implies the uniform bound $\|\nabla\vy_h\|_{L^2(\Omega;\mathbb{R}^{3\times2})}\lesssim 1$ and, hence, the weak convergence of a subsequence (not relabeled) of $\vy_h - \overline{\vy}_h$ to a function $\vy^* \in H^1(\Omega;\mathbb{R}^3)$. We need to prove further regularity of $\vy^*$.

Proceeding now as in Lemma \ref{lem:compactness-properties} (compactness properties) and Proposition \ref{prop:selection-mechanism} (compactness) over each subdomain $\Omega_i$, we can show that up to a subsequence $\nabla\vy_h|_{\Omega_i}\to\nabla\vy^*|_{\Omega_i}$ converges strongly in $L^2(\Omega_i;\mathbb{R}^{3\times2})$ and that $\nabla\vy^*|_{\Omega_i}\in H^1(\Omega_i;\mathbb{R}^{3\times2})$  and $\I[\vy^*|_{\Omega_i}] = g$ a.e.\ in $\Omega_i$ for each $i=1,\ldots,n$. In view of Proposition \ref{prop:target-metric} (target metric) we also obtain that $W(\vx ,\nabla\vy^*|_{\Omega_i})=0$ for each $i=1,\ldots,n$, whence $E[\vy^*]=0$.

It remains to show that $\vy^*$ is globally Lipschitz, i.e.\ $\vy^* \in W^{1,\infty}(\Omega;\mathbb{R}^3)$. We note that $\vy^*|_{\Omega_i} \in W^{1,\infty}(\Omega_i;\mathbb{R}^3)$ for each $i=1,\ldots,n$ because $\I[\vy^*|_{\Omega_i}]=g\in L^\infty(\Omega_i;\mathbb{R}^{2\times2})$, which in turn implies that the trace of $\vy^*|_{\Omega_i}$ on $\partial\Omega_i$ is continuous. Since $\vy^* \in H^1(\Omega;\mathbb{R}^3)$, we infer that the jumps $[\vy^*]|_\Gamma=0$ must vanish, thereby showing that $\vy^* \in C^0(\overline{\Omega};\mathbb{R}^3)$ is uniformly continuous in $\Omega$. This, in addition to being piecewise Lipschitz, proves that $\vy^*$ is globally Lipschitz, whence   $\vy^*\in\V_\Gamma$ as asserted.
\end{proof}

\section{Numerical simulations}\label{sec:simulations}
We implement the proposed method within the multi-physics
finite element software Netgen/NGSolve \cite{schoberl2017netgen},
and the visualization relies on ParaView~\cite{Ahrens2005}.
In this section, we present several tests to illustrate properties of the LCN model \eqref{eq:stretching-energy}-\eqref{eq:stretching-energy-density}, as well as effectiveness and efficiency of our algorithm. A derivation of \eqref{eq:stretching-energy}-\eqref{eq:stretching-energy-density} and more extensive computational investigation is contained in \cite{bouck2022computation}.  

\subsection{Non-isometric origami: pyramids}\label{sec:pyramids}
In this subsection, we study an example of non-isometric origami, whose structure is made of folding thin sheets and complies with Assumption \ref{as:pwH2-immersibility} (regularity with creases). We refer to \cite{plucinsky2016programming, plucinsky2018actuation,plucinsky2018patterning} for a more detailed introduction of non-isometric origami. 

First, we divide the domain $\Omega$ into several subdomains $\Omega_i$ by ``folding lines'' or ``creases'' $\Gamma$, and consider meshes fitted to the folding lines. We take the regularization parameter $c_r=0$ along $\Gamma$ and $c_r=100$ in the rest of $\Omega$. In fact, vanishing regularization models a weakened  (or damaged) material on creases \cite{bartels2022modeling}, and mathematically this allows for the formation of kinks.

We consider piecewise constant blueprinted director fields $\vm$ and set-up creases $\Gamma$ and subdomains $\Omega_i$ as depicted in Fig. \ref{fig:pyramid_dir}. In this experiment, we take 
$\Omega=[0,1]^2$,
\[
h=1/64, \quad s=0.1, \quad s_0=1, \quad \tol_1=10^{-10}, \quad \tol_2=10^{-6}.
\]
The ensuing configurations are all compatible in the sense of \cite[formula (6.3)]{plucinsky2018patterning}: the magnitude of the tangential components of $\vm$ and $\lambda$ are continuous across $\Gamma$. We also discuss incompatible origami in \cite{bouck2022computation}.

\textit{Case 1.} We first consider the set-up on the left of Fig. \ref{fig:pyramid_dir}, $\tau=1$, and use initialization $\vy_h^0=I_h\vy^0$ with 
\begin{equation}\label{eq:initialization_pyramid_1}
\vy^0(x_1,x_2)~=~\bigg(x_1,x_2,0.8x_1(1-x_1)x_2(1-x_2)\bigg).
\end{equation} 

\smallskip
\textit{Case 2.}
We then consider the set-up on the right of Fig. \ref{fig:pyramid_dir}, $\tau=0.4$, and use the same initialization as \eqref{eq:initialization_pyramid_1}, 

\smallskip
\textit{Case 3.}
We also apply another initialization 
\begin{equation}\label{eq:initialization_pyramid_2}
\vy^0(x_1,x_2)~=~\bigg(x_1,x_2, 0.2\cos\big(7\pi(x_1-0.5)\big)x_2(x_2-1)\bigg)
\end{equation} 
to the set-up on the right of Fig. \ref{fig:pyramid_dir} and take $\tau=0.5$.
 
The computed solutions for all three cases are shown in Fig. \ref{fig:pyramid_final}. We get pyramid-like final configuration for \textit{Case 1}, which is consistent with the prediction in \cite{modes2011blueprinting}. For \textit{Cases 2} and \textit{3}, we obtain different equilibria starting from different initial states, but the difference in final energies is about $10^{-6}$. They are indeed global minimizers, because computed metric deviations 
\begin{equation}\label{eq:metric-deviation}
e_h[\vy^{\infty}_h]~:=~\| \I[\vy^{\infty}_h]~-~g\|_{L^1(\Omega)}
\end{equation}
are $1.6\times10^{-3}, 2.5\times10^{-3}, 2.4\times10^{-3}$ for \textit{Cases 1,2,3} respectively. 
Therefore, this gives an example where global minimizers to \eqref{eq:minimization-pb-discrete} are non-unique, and computed equilibrium shapes depend on initializations.      
This verifies the heuristic discussion in Example \ref{ex:pyramid-sawtooth}, confirms the lack of convexity of this model, and illustrates capability and accuracy of our numerical method for computing origami structures.    

\smallskip

\textit{Case 4.}
To confirm that the pyramid-like origami structure is \emph{not} a mesh effect, we generate a mesh with $h=1/64$ unfitted to the two diagonals $\Gamma$ of the square.
We consider the same set-up as in \textit{Case 1} except that the regularization parameter $c_r(\vx )=0$ if $\vx\in \Gamma_{0.02}$ and $c_r(\vx )=100$ otherwise, where $\Gamma_d:=\{\vx\in \Omega:\textrm{dist}(\vx ,\Gamma)<d\}$ is a strip surrounding the crease $\Gamma$.

The computed solution for the \textit{Case 4} is also displayed in Fig. \ref{fig:pyramid_final}. We still get the pyramid-like configuration, but with tiny wrinkling appearing in the strips $\Gamma_{0.02}$, due to the lack of regularization in this region. We present a thorough discussion of the computational effect of regularization in \cite{bouck2022computation}.
 
\begin{figure}[htbp]
\begin{center}
\includegraphics[width=.4\textwidth]{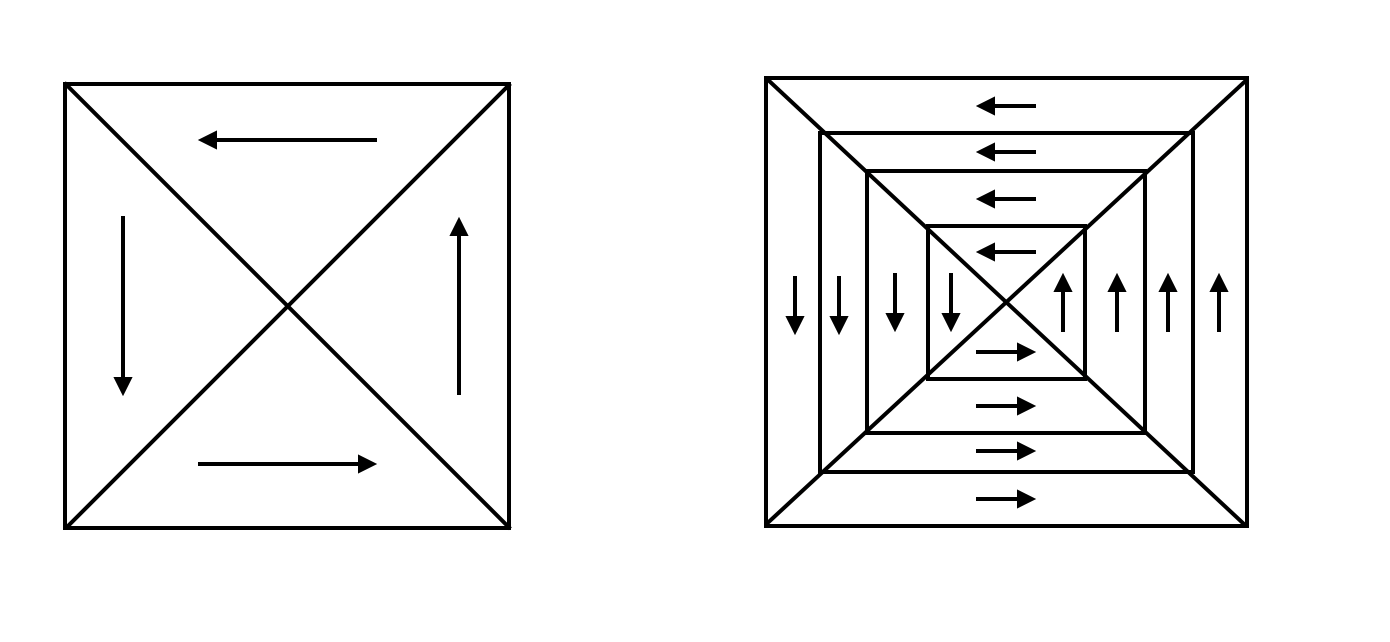}
\end{center}
\caption{This is the set-up for experiments in Subsection \ref{sec:pyramids}. Solid lines inside the square represent the locations of the creases, and arrows shows the piecewise constant director field $\vm$ in each subdomain. In this case, $\vm=(0,-1),(-1,0),(0,1),(1,0)$ in different subdomains.}
\label{fig:pyramid_dir}
\end{figure}

\begin{figure}[htbp]
\begin{center}
\includegraphics[width=.8\textwidth]{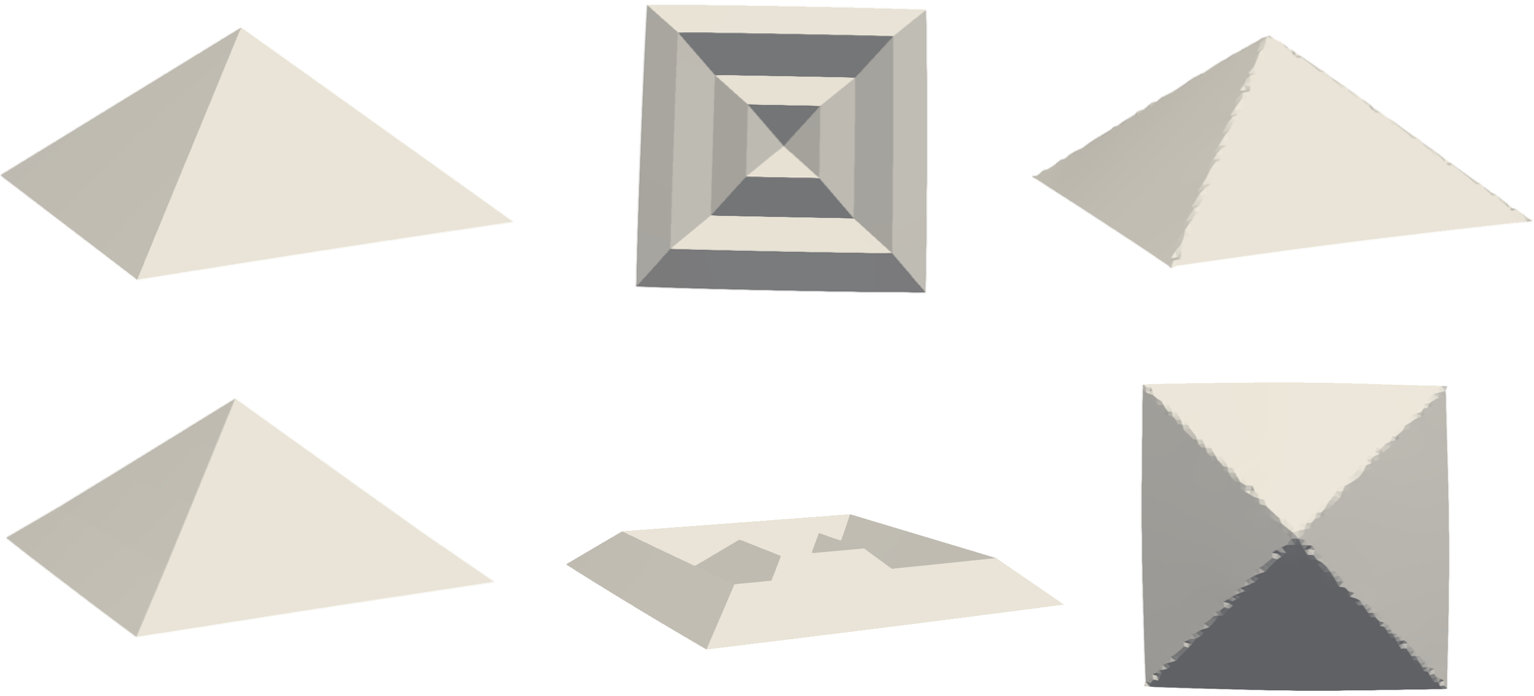}
\end{center}
\caption{Non-isometric origami: First column, pyramid-like final configurations for \textit{Case 1} and \textit{Case 2}. Second column, different views of final configuration for \textit{Case 3} exhibiting multiple folds. Third column, different views of final configuration for \textit{Case 4}, thereby confirming that the pyramid-like configuration is not a mesh effect.}
\label{fig:pyramid_final}
\end{figure}

\subsection{Liquid crystal defects}\label{sec:lc-defects}
In this section, we simulate a configuration arising from a liquid crystal defect, which is inspired by experimental results in \cite{mcconney2013topography, white2015programmable} and numerical simulations in \cite{chung2017finite}. We take $\vm$ in polar coordinates $(r,\theta)$ to be
\begin{equation}\label{eq:m_rot_sym_2}
\vm(r,\theta) = \big(\cos(1.5 \, \theta),\sin(1.5 \, \theta)\big).
\end{equation}
This rotationally symmetric blueprinted director field is discontinuous at the origin where it exhibits a stable defect of degree $3/2$. We consider $h = 1/128$ and $\tau$ to be the largest admissible $\tau$ so that the Newton sub-iteration is well-posed, an issue explored later in Section \ref{sec:method-quant}. The other parameters for the model are taken to be
\begin{equation}\label{eq:params_rot_sym_2}
s=0.1, \quad s_0=1, \quad c_r=1, \quad \tol_1=10^{-10}, \quad \tol_2=10^{-9}.
\end{equation}
The computed solution is displayed in Fig. \ref{fig:1p5defect}. Moreover, Fig.\ \ref{fig:conv-plot} (with $c_r=0$) reveals that the energy $E_h[\vy_h^\infty]$ decays subquadratically in $h$, which indicates that the limiting deformation $\vy^*$ is not in $H^2(\Omega;\mathbb{R}^3)$ whence it does not satisfy Assumption \ref{as:H2-immersibility} (regularity). Other configurations arising from liquid crystal defects have been computed in \cite{chung2017finite}, though to the best of our knowledge our simulation seems to be the first one of a defect of degree $3/2$. In \cite{bouck2022computation}, we present several configurations beyond theory, including higher order defects, which are computational accessible by our algorithm.
\begin{figure}[htbp]\begin{center}
\includegraphics[width=.3\textwidth]{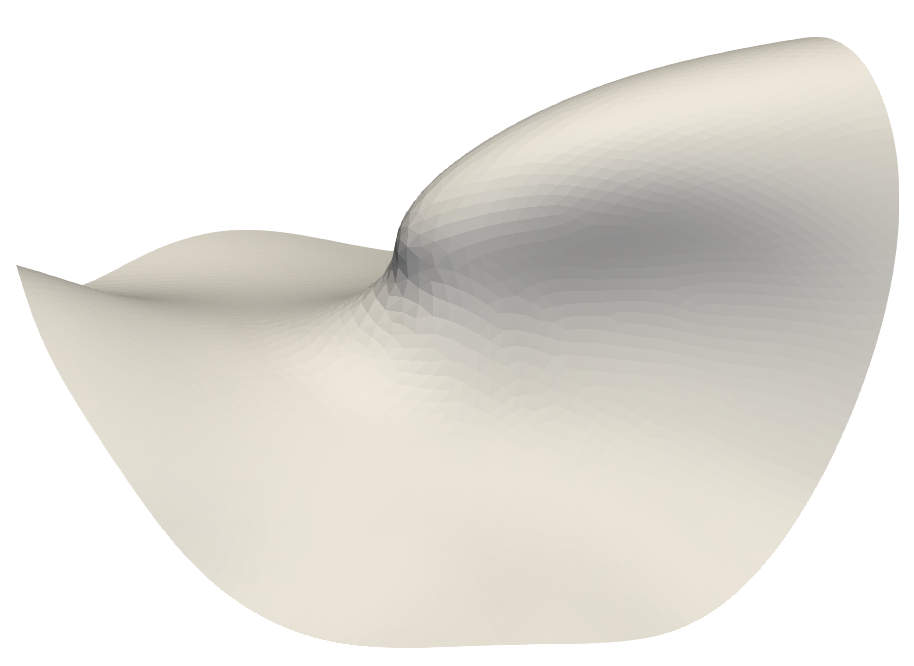}
\includegraphics[width=.28\textwidth]{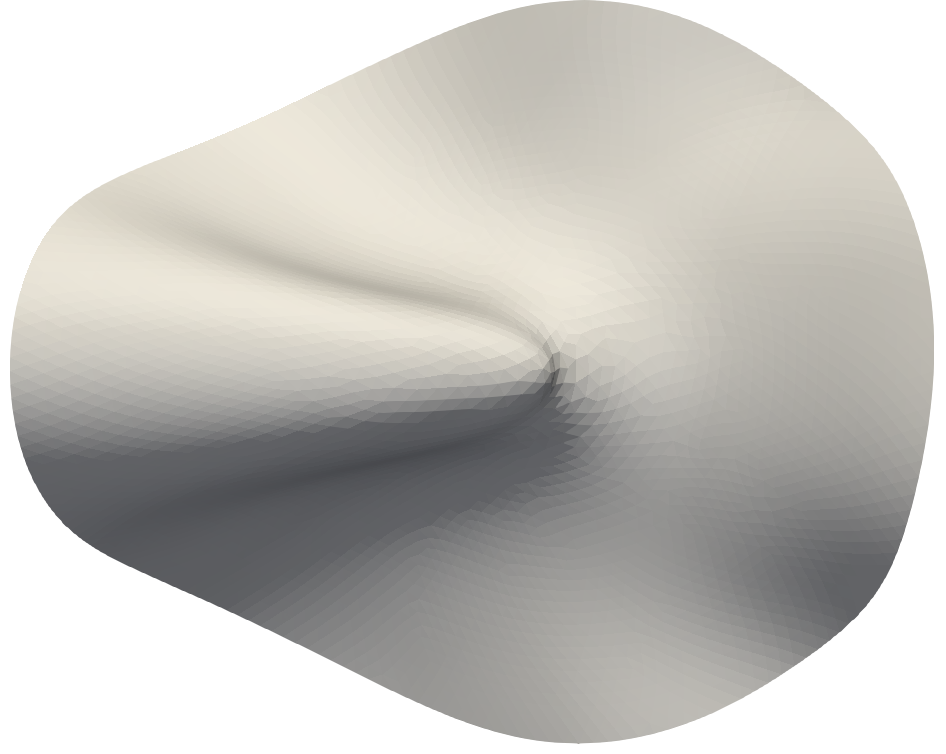}
\end{center}
\caption{Blueprinted director field $\vm$ with a stable defect of degree $3/2$: Two views of the computed deformation $\vy_h^{\infty}$. We observe a ``bird's beak'' structure around the defect location, which matches the experimental picture shown in \cite{mcconney2013topography, white2015programmable}.}
\label{fig:1p5defect}
\end{figure}

\subsection{Quantitative properties}\label{sec:method-quant}
In this subsection, we investigate computationally some quantitative properties of the proposed method, and in particular the role of meshsize $h$ and pseudo time step $\tau$. Our goals are as follows.
\begin{itemize}
\item \textbf{Convergence of metric deviation}. 
We measure the metric deviation $e_h[\vy_h^{\infty}]$ defined in \eqref{eq:metric-deviation} as an error between computed solutions $\vy_h^{\infty}$ and global minimizers, and recall that $g$ is given by \eqref{eq:target-metric}. We expect convergence of $e_h[\vy^{\infty}_h]$ as $h\to0$.
\item \textbf{Convergence of energy}. We can see that the exact minimum energy is $0$ from discussions of Section \ref{sec:properties-str-energy}. Therefore, we also expect convergence of the energy error $|E_h[\vy^{\infty}_h]|:=|E_h[\vy^{\infty}_h]-0|$ as $h\to0$.
\item \textbf{Role of pseudo time-step $\tau$}. We expect that the well-posedness and convergence of Newton method \eqref{eq:Newton-step} depend on $\tau$. We thus disclose the influence of $\tau$ on the final energy $E_h[\vy^{\infty}_h]$, metric deviation $e_h[\vy^{\infty}_h]$ and the number $N$ of gradient flow iterations. 
\end{itemize}

We consider three experiments to explore these issues computationally.

\medskip
\emph{Experiment 1: smooth $\vm$}. Let $\Omega$ be the unit square $\Omega=[-0.5,0.5]^2$ and 
\begin{equation}\label{eq:cont_dir_1}
{\vm}=(x_1+1,x_2+1)/\sqrt{(x_1+1)^2+(x_2+1)^2}. 
\end{equation}
We take parameters
\[
s=0.1, \quad s_0=1, \quad c_r=0, \quad \tol_1=10^{-10}, \quad \tol_2=10^{-9},
\]
and the initialization $\vy_h^0=I_h\vy^0$ with
\begin{equation}\label{eq:initialization_simple_metric}
\vy^0(x_1,x_2)~=~\big(x_1,x_2,0.8(x_1-0.5)(x_1+0.5)(x_2-0.5)(x_2+0.5)\big).
\end{equation}
Tables \ref{tab:conv_h} and \ref{tab:conv_h_2} display the results. We see that in Table \ref{tab:conv_h} both $e_h[\vy^{\infty}_h]$ and $|E_h[\vy_h]|$ are rather insensitive to $\tau$ but $N$ decreases with increasing $\tau$. The fact that performance does not improve for smaller $\tau$ 
motivates us to explore the largest admissible time step $\tau_{\max}$ with various $h$ in Table \ref{tab:conv_h_2}, which also reveals the convergence of our method.

\begin{table}[htbp]
\begin{center}
\begin{tabular}{|c|c|c|c|}
\hline
$\tau$ & $e_h[\vy^{\infty}_h]$ & $|E_h[\vy^{\infty}_h]|$ & $N$  \\
\hline
0.2 &  4.66909E-3 & 2.3484E-5 & 2304 \\ \hline
0.4 &  4.66909E-3 & 2.3484E-5 & 1151 \\ \hline
0.8 &  4.66910E-3 & 2.3484E-5 & 574 \\ \hline
1.6 &  4.66918E-3 & 2.3482E-5 & 286 \\ \hline
3.2 &  diverge & diverge & diverge \\
\hline
\end{tabular}
\vspace{0.3cm}
\caption{Experiment 1 with the blueprinted director field \eqref{eq:cont_dir_1}. This reveals the influence of $\tau$ on errors and the number of gradient flow iterations $N$ with fixed $h=1/32$.  
} \label{tab:conv_h}
\end{center}
\end{table}
\begin{table}[htbp]
\begin{center}
\begin{tabular}{|c|c|c|c|c|}
\hline
h & $\tau_{\max}$ & $e_h[\vy^{\infty}_h]$ & $|E_h[\vy^{\infty}_h]|$ & $N$  \\
\hline
1/16 & 2.23 & 9.45213E-3 & 8.7909E-5 & 267  \\ \hline
 1/32 & 2.11 & 4.66924E-3 & 2.3482E-5 & 216 \\ \hline
 1/64 &  2.10 & 2.30916E-3 & 5.7742E-6 & 130 \\ \hline
 1/128 & 2.09 & 1.22053E-3 & 1.5746E-6 & 129\\
\hline
\end{tabular}
\vspace{0.3cm}
\caption{Experiment 1 with the blueprinted director field \eqref{eq:cont_dir_1}. This gives the largest admissible time step $\tau_{\max}$ that guarantees the well-posedness and convergence of Newton step for various $h$. Convergence of errors as $h\to0$ is observed with corresponding $\tau_{\max}$.  
} \label{tab:conv_h_2}
\end{center}
\end{table}

\medskip
\emph{Experiment 2: effect of regularization}. We consider the same set-up as \emph{Experiment 1} but instead of $c_r=0$ we take $c_r=1$.

\medskip
\emph{Experiment 3: $\vm$ with defects}. 
We consider the set-up in Section \ref{sec:lc-defects}. The director field $\vm$ is the degree $3/2$ defect given in \eqref{eq:m_rot_sym_2}. The parameters are those in \eqref{eq:params_rot_sym_2}, but we take $c_r=0$ instead of $c_r=1$.

Errors for \emph{Experiments 1,2,3} are plotted in Fig. \ref{fig:conv-plot} for meshsizes
$h=1/16,1/32,$ $1/64,1/128$. We discuss them next.

\begin{figure}[htbp]
\begin{center}
\begin{tikzpicture}[scale=.75]
\begin{loglogaxis}
        [
            xlabel=$h$,
            ylabel style={rotate=-90},
            ylabel= error,
            legend style=
            {
                at={(1,1)},
                anchor=north west,
                draw=none,
                fill=none
            },
            legend cell align=left,
            grid=major,
            clip=false
        ]
\addplot table {
0.0625 9.45213E-3
0.03125 4.66924E-3
0.015625 2.30916E-3
0.0078125 1.22053E-3
};
\addplot table {
0.0625 0.01085257111159688
0.03125 0.004954169769875165
0.015625 0.002359597346030662
0.0078125 0.0012895314351707174
};
\addplot table {
0.0625 0.08063830702221442
0.03125 0.041938234260815305
0.015625 0.021253382529430286
0.0078125 0.010701233849969292
};
\addplot table {
0.0625 8.7909E-5
0.03125 2.3482E-5
0.015625 5.7742E-6
0.0078125 1.5746E-6
};
\addplot table {
0.0625 0.00011449
0.03125 0.0000264577
0.015625 6.03216E-6
0.0078125 1.73496E-6
};
\addplot table {
0.0625 0.00630405
0.03125 0.00224407
0.015625 0.000728464
0.0078125 0.000224037
};

\logLogSlopeTriangle{0.9}{0.2}{0.85}{1}{black};
\logLogSlopeTriangle{0.9}{0.2}{0.29}{2}{black};
\logLogSlopeTriangle{0.9}{0.2}{0.63}{2}{black};
\legend
            {
                Experiment 1: $e_h[\vy^{\infty}_h]$, 
                Experiment 2: $e_h[\vy^{\infty}_h]$,
                Experiment 3: $e_h[\vy^{\infty}_h]$,
                Experiment 1: $|E_h[\vy^{\infty}_h]|$, 
                Experiment 2: $|E_h[\vy^{\infty}_h]|$,
                Experiment 3: $|E_h[\vy^{\infty}_h]|$
            }
\end{loglogaxis}
\end{tikzpicture}
\vskip-0.3cm
\caption{Convergence of errors for \emph{Experiments 1,2,3}. We can see that the regularization has almost no influence on convergence rates, while it results in a slightly larger value of errors. For \emph{Experiment 3} with discontinuous $\vm$ the errors are significantly larger. In all cases we observe that $e_h[\vy_h^{\infty}]$ is linear in $h$, while $|E_h[\vy^{\infty}_h]|$ is quadratic in $h$ for \emph{Experiments 1,2} and has a rate slightly worse than quadratic (it is approximately $\mathcal{O}(h^{\log_23})$) for \emph{Experiment 3}.}
\end{center}
\label{fig:conv-plot}
\end{figure}
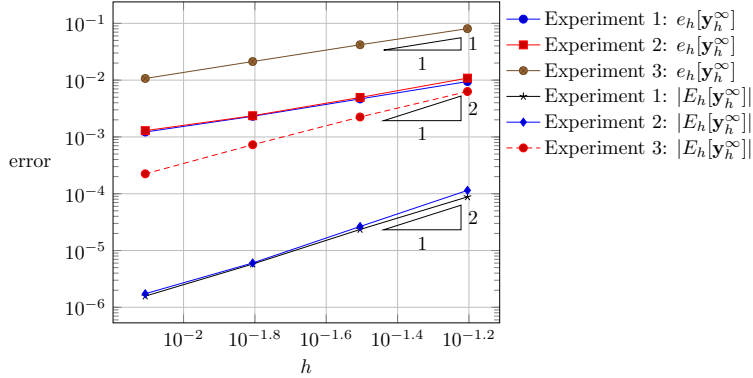

\subsection{Conclusions}
We conclude with a summary of quantitative observations.
\begin{itemize}
\item The metric deviation $e_h[\vy_h^{\infty}]$ converges as $\mathcal{O}(h)$. The energy error $|E_h[\vy^{\infty}_h]|$ converges as $\mathcal{O}(h^2)$ or sub-quadratically, depending on the regularity of $\vm$.
\begin{itemize}
\item $|E_h[\vy^{\infty}_h]|$ converges as $\mathcal{O}(h^2)$ in Experiments 1 and 2, when $\vm$ is smooth and $g$ is likely to admit a $H^2$ isometric immersion. This computational result corroborates the validity of Assumption \ref{as:H2-immersibility} and the energy scaling in Proposition \ref{prop:energy-scaling}.
\item $|E_h[\vy^{\infty}_h]|$ converges sub-quadratically in Experiment 3, when $\vm$ has a degree $3/2$ defect. It is plausible that $g$ does not admit a $H^2$ isometric immersion, and if so the validity of Assumption \ref{as:H2-immersibility} is questionable. It is worth realizing that this assumption is responsible for the quadratic energy scaling in Proposition \ref{prop:energy-scaling}.
\end{itemize}
\item The Newton sub-iteration is well-posed and convergent when $\tau$ is small enough. The influence of $h$ on $\tau_{\max}$ is negligible.     
\item Once $\tau$ is chosen so that the Newton method is well-posed and convergent, further decreasing of $\tau$ has only a negligible influence on errors. 
\item For fixed $h$, the number of gradient flow iterations $N=\mathcal{O}(\tau^{-1})$, and so does the computational time. 
\end{itemize}
These conclusions indicate the convergence of the method and the fact that an ideal choice of $\tau$ is its largest admissible value $\tau_{\max}$ for various problems. We do not need to take $\tau\to0$ as meshes refine, and $\tau_{\max}$ provides a moderate upper bound for $\tau$. This is an advantage compared to a linearized gradient flow (e.g. \cite{bonito2023numerical}) and a fixed point sub-iteration scheme (e.g. \cite{bartels2017bilayer}) in that both require $\tau$ depending on $h$.

\section*{Acknowledgements}
Lucas Bouck was supported by the NSF grant DGE-1840340. Ricardo H. Nochetto and Shuo Yang were partially supported by NSF grant DMS-1908267.

\bibliographystyle{siamplain}
\bibliography{LCE_ref_NA}
\end{document}